\newtheorem{theorem}{Theorem}[section]
\newtheorem{remark}[theorem]{Remark}
\newtheorem{proposition}[theorem]{Proposition}
\newtheorem{lemma}[theorem]{Lemma}
\newtheorem{define}[theorem]{Definition}
\newtheorem{corollary}[theorem]{Corollary}
\newcommand{\rmv}[1]{}
\newcommand{\Tr}{\mathrm{Tr}}
\newcommand{\fqn}{\mathbb{F}_{q^n}}
\newcommand{\F}{\mathbb{F}}
\newcommand{\fq}{\mathbb{F}_q}
\newcommand{\ord}{\mathrm{ord}}
\title{Existence of primitive $1$-normal elements in finite fields}
\author[Car]{L. Reis\fnref{fn1}}
\ead{lucasreismat@gmail.com}
\author[Car]{D. Thomson\corref{cor1}}
\ead{dthomson@math.carleton.ca}
\address[Car]{School of Mathematics and Statistics, Carleton University, 1125 Colonel By Drive, Ottawa ON (Canada), K1S 5B6}
\begin{document}

\begin{abstract}
An element $\alpha \in \fqn$ is \emph{normal} if $\mathcal{B} = \{\alpha, \alpha^q, \ldots, \alpha^{q^{n-1}}\}$ forms a basis of $\fqn$ as a vector space over $\fq$; in this case, $\mathcal{B}$ is a \emph{normal basis} of $\fqn$ over $\fq$. The notion of $k$-normal elements was introduced in Huczynska et al (2013). Using the same notation as before, $\alpha$ is $k$-normal if $\mathcal{B}$ spans a co-dimension $k$ subspace of $\fqn$. It can be shown that $1$-normal elements always exist in $\fqn$, and Huczynska et al (2013) show that elements that are simultaneously primitive and $1$-normal exist for $q \geq 3$ and for large enough $n$ when $\gcd(n,q) = 1$ (we note that primitive $1$-normals cannot exist when $n=2$). In this paper, we complete this theorem and show that primitive, $1$-normal elements of $\fqn$ over $\fq$ exist for all prime powers $q$ and all integers $n \geq 3$, thus solving Problem 6.3 from Huczynska, et al (2013).
\end{abstract}

\begin{keyword}
finite fields \sep primitive elements \sep normal bases \sep $k$-normal elements
\end{keyword}

\maketitle

\section{Introduction}
Let $q$ be a power of a prime, there is a unique (up to isomorphism) finite finite of $q$ elements, denoted $\fq$. For all positive integers $n$, the finite extension field $\fqn$ of $\fq$ can be viewed as a vector space over $\fq$. Finite extension fields display cyclicity in multiple forms; for example, their Galois groups are cyclic of order $n$, generated by the Frobenius automorphism $\alpha_q(\alpha) = \alpha^q$ for any $\alpha \in \fqn$. The multiplicative group of $\fqn$, denoted $\fqn^*$ is a cyclic group of order $q^n-1$. 

An element $\alpha \in \fqn$ is \emph{primitive} if it is a generator of $\fqn^*$. 
An element $\alpha \in \fqn$ is \emph{normal} in $\fqn$ over $\fq$ if its Galois orbit is a spanning set for $\fqn$ as a vector space over $\fq$. Specifically, $\alpha$ is a normal element if $\mathcal{B} = \{\alpha, \alpha^q, \ldots, \alpha^{q^{n-1}}\}$ is a basis of $\fqn$ over $\fq$, whence we call $\mathcal{B}$ a \emph{normal basis} of $\fqn$ over $\fq$. The existence of normal elements of finite extension fields $\fqn$ over $\fq$ was established for all $q$ and $n$ by Hensel in 1888 and re-established by Ore in 1934 by studying properties of linearized polynomials. In this work, we draw on ideas extending from Ore. 

A natural question is to establish the existence of elements of $\fqn$ which are simultaneously primitive and normal over $\fq$. This was proven asymptotically when $q = p$ a prime by Davenport, and then asymptotically for all $q$ by Carlitz. The \emph{primitive normal basis theorem} was finally established for all $q,n$ by Lenstra and Schoof in 1988~\cite{lenstra} using a combination of character sums, sieving results and a computer search. Using more complicated sieving techniques, Cohen and Huczynska established the primitive normal basis theorem for all $q$ and $n$ without the use of a computer in 2003. 

Recently, Huczynska, et al~\cite{HMPT} defined \emph{$k$-normal elements} as generalizations of normal elements. In~\cite{HMPT}, they showed multiple equivalent definitions, we pick the most natural for this work. 

\begin{define}\label{def:k-normal}
Let $\alpha \in \fqn$, then $\alpha$ is $k$-normal over $\fq$ if its orbit under the cyclic (Frobenius) Galois action spans a co-dimension $k$ subspace of $\fqn$ over $\fq$; that is, if $V = \mathrm{Span}(\alpha, \alpha^q, \ldots, \alpha^{q^{n-1}})$, then $\dim_{\fq}(V) = n-k$. 
\end{define}

Under Definition~\ref{def:k-normal}, normal elements are $0$-normal elements, and all elements of $\fqn$ are $k$-normal for some $0 \leq k \leq n$. 
Notice that it is important to specify over which field subfield an element of $\fqn$ is $k$-normal. If not otherwise specified, when we say $\alpha \in \fqn$ is $k$-normal, we always assume it is $k$-normal over $\fq$. 

It can be shown (see Section~\ref{sec:background}) that there always exist $1$-normal elements of $\fqn$ over $\fq$. In~\cite{HMPT}, using a similar methodology to Lenstra-Schoof, the authors partially establish a primitive $1$-normal element theorem; that is, the existence of elements which simultaneously generate the multiplicative group of a finite fields and whose Galois orbit is a spanning set of a (Frobenius-invariant) hyperplane. 

\begin{theorem}[\cite{HMPT}, Theorem 5.10] \label{thm:mainpanario}
Let $q=p^e$ be a prime power and $n$ a positive integer not divisible by $p$. Assume that $n\ge 6$ if $q\ge 11$ and that $n\ge 3$ if $3\le q\le 9$. Then there exists a primitive $1$-normal element of $\F_{q^n}$ over $\F_q$.
\end{theorem}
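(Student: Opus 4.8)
The plan is to reduce $1$-normality to a prescribed-order condition and then run a hybrid character-sum argument in the style of Lenstra--Schoof. Since $p \nmid n$, the polynomial $x^n - 1$ is squarefree over $\fq$, and $\fqn$ is a cyclic $\fq[x]$-module under the Frobenius action $\sigma \colon \alpha \mapsto \alpha^q$, with $x$ acting as $\sigma$. Writing $h(x) = (x^n-1)/(x-1) = 1 + x + \cdots + x^{n-1}$, observe that $h(\sigma)\alpha = \alpha + \alpha^q + \cdots + \alpha^{q^{n-1}} = \Tr_{\fqn/\fq}(\alpha)$. Hence the submodule $W = \ker h(\sigma) = \{\alpha \in \fqn : \Tr(\alpha) = 0\}$ has $\fq$-dimension $n-1$, and because $h$ is squarefree, $W \cong \fq[x]/(h)$ is cyclic. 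An element $\alpha \in W$ generates $W$ as an $\fq[x]$-module exactly when its $\fq$-order is $h$, i.e.\ when $\dim_{\fq}\mathrm{Span}(\alpha, \alpha^q, \ldots, \alpha^{q^{n-1}}) = n-1$; these are precisely the $1$-normal elements of order $h$. I would therefore seek a single $\alpha$ that is simultaneously primitive in $\fqn^*$ and a free generator of $W$ (which automatically forces $\Tr(\alpha) = 0$). Producing even one such $\alpha$ proves the theorem, and the degenerate case $n = 2$, where $h = x+1$ forces $\alpha^q = -\alpha$, is correctly excluded.

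I would detect both conditions by characteristic functions. For primitivity, use the standard weighted sum over multiplicative characters $\chi$ of $\fqn^*$ of order $d \mid q^n - 1$, carrying the factor $\tfrac{\varphi(q^n-1)}{q^n-1}\tfrac{\mu(d)}{\varphi(d)}$. For the module condition, use the additive analogue: writing each additive character as $\psi_\beta(\alpha) = \psi(\Tr(\beta\alpha))$ for the canonical $\psi$, the $\fq[x]$-action endows the characters of $W$ with an $\fq[x]$-order dividing $h$, and the indicator that $\alpha$ generates $W$ is $\tfrac{\Phi_q(h)}{q^{n-1}} \sum_{g \mid h} \tfrac{\mu_q(g)}{\Phi_q(g)} \sum_{\ord \psi = g} \psi(\alpha)$, where $\Phi_q$ and $\mu_q$ are the Euler and M\"obius functions of $\fq[x]$. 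Encoding membership in $W$ itself as the additive condition $\tfrac1q\sum_{a \in \fq}\psi(a\,\Tr\alpha)$, the product of these indicators summed over all $\alpha \in \fqn$ expresses the desired count $N$ as a double sum over pairs $(\chi,\psi)$ of mixed multiplicative--additive Weil sums. The term indexed by trivial characters gives the main term $\tfrac{\varphi(q^n-1)\,\Phi_q(h)}{q^n-1}$.

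The next step is to bound the remaining terms. Whenever $(\chi,\psi)$ is not the trivial pair, the associated sum is a mixed Gauss-type sum and admits a square-root bound of the form $c\,q^{(n-1)/2}$ for an explicit small constant $c$, after separating the few degenerate cases where $\psi$ becomes trivial on $W$ or $\chi$ becomes trivial on $W\setminus\{0\}$. Collecting these bounds and factoring out the normalizing constants reduces the whole problem to a numerical inequality of the shape $q^{(n-1)/2} > 2^{\omega(q^n-1)}\cdot 2^{\omega_q(h)}$, where $\omega(q^n-1)$ is the number of distinct prime divisors of $q^n-1$ and $\omega_q(h)$ the number of distinct monic irreducible factors of $h$ over $\fq$.

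The main obstacle is that this inequality fails for small $q$ and small $n$, because $2^{\omega(q^n-1)}$ can be comparable to or larger than $q^{(n-1)/2}$. To push the range down to the stated hypotheses I would apply the Cohen--Huczynska prime sieve: factor $q^n-1$ and $h$ into a small ``core'' handled exactly together with a set of $s$ sieving primes, replacing the crude divisor count by a quantity controlled by $\delta = 1 - \sum_{i=1}^{s} \ell_i^{-1}$ and yielding a sufficient condition roughly of the form $q^{(n-1)/2} > \bigl(\tfrac{s-1}{\delta} + 2\bigr)\,2^{\omega_0}$, where $\omega_0$ counts the core primes. This succeeds for all $q \ge 11$ once $n \ge 6$ and for $3 \le q \le 9$ once $n \ge 3$; the finitely many boundary pairs $(q,n)$ not covered by the analytic estimate are then settled by direct computation. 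Making the constant $c$ and the sieve explicit enough to close exactly the stated ranges, rather than merely asymptotically, is the delicate part of the argument.
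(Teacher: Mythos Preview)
Your proposal is correct and follows essentially the same route as \cite{HMPT} (which the present paper merely quotes, reproducing the key inequality as Lemma~\ref{sievepanario} and the exception list as Theorem~\ref{cohenpairs}): reduce to counting primitive, $\tfrac{x^n-1}{x-1}$-free, trace-$0$ elements via the product of the three characteristic functions, bound the non-principal terms by Gauss sums over $\fqn$, and arrive at the sufficient condition $W(T)W(q^n-1)<q^{n/2-1}$---note your exponent should be $n/2-1$, not $(n-1)/2$, since the Gauss sums live on $\fqn$ and have modulus $q^{n/2}$. The Cohen--Huczynska sieve you invoke at the end is slightly more than what is actually used for this particular theorem: \cite{HMPT} relies on the explicit list of $34$ exceptional pairs from \cite{cohen2} and handles those by direct verification.
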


This paper deals with completing Theorem~\ref{thm:mainpanario} by covering the following cases:
\begin{enumerate}
\item Including the cases $n=2,3,4,5$ for $q \geq 11$. 
\item Expanding Theorem~\ref{thm:mainpanario} to include $q=2$.
\item Removing the restriction where $p \nmid n$. 
\end{enumerate}

In Section~\ref{sec:background} we present some of the necessary background for this paper. In Section~\ref{sec:charactersum}, we establish a general character sum estimate for the number of elements in $\fqn$ that are simultaneously primitive, $g$-free for some polynomial $g \in \fq[x]$ (whose definition will be established later) and having trace $\beta$ for any $\beta$ laying in a subfield of $\fqn$ over $\fq$. Many technical estimates for this character sum are presented in the appendix. In Section~\ref{sec:coprime} we complete Theorem~\ref{thm:mainpanario} when $p \nmid n$; our result is usually explicit, but in some problematic cases we can only do so asymptotically. In Section~\ref{sec:pdivn} we similarly prove the existence of primitive $1$-normal elements when $p$ divides $n$. 

Our methods are largely analytic and rely on a number of estimations presented in Appendix~\ref{sec:estimations}. In all cases, we find effective bounds for when our analytic methods fail and in the remaining cases we find primitive $1$-normal elements by computer search. The pseudocode for the search is presented in Appendix~\ref{sec:pseudocode}.

We now give a succinct statement of the main result of this paper. 

\begin{theorem}\label{MainTheorem} \textup{\textbf{(The Primitive $1$-Normal Theorem)}}
Let $q$ be a prime power and let $n \geq 3$ be a positive integer. Then there exists a primitive $1$-normal element of $\fqn$ over $\fq$. Furthermore, when $n=2$ there is no primitive $1$-normal element of $\F_{q^2}$ over $\fq$. 
\end{theorem}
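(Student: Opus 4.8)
The plan is to split the proof of Theorem~\ref{MainTheorem} into two regimes according to whether $p \nmid n$ or $p \mid n$, using Theorem~\ref{thm:mainpanario} to dispose of the bulk of the first case and handling the leftover small parameters and the $q=2$ case separately. For the negative assertion when $n=2$, I would first observe that in $\F_{q^2}$ an element $\alpha$ is $1$-normal precisely when $\{\alpha, \alpha^q\}$ spans a $1$-dimensional space, i.e. when $\alpha^q = c\alpha$ for some $c \in \fq$; applying the norm shows $c^2 = c^{q+1} = 1$, and a short argument (raising to powers of $q$, or noting $\alpha \in \fq$ forces $\alpha^q=\alpha$) shows that any such $\alpha$ lies in $\fq$ up to a scalar, hence $\alpha^{q-1}\in\fq$, so $\alpha$ has multiplicative order dividing $2(q-1) < q^2-1$ and cannot be primitive. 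This is the easy direction.

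For the main existence claim, the strategy is the standard character-sum-plus-sieve machinery. The key reduction, which I would borrow from the background section, is that $\alpha$ being $1$-normal over $\fq$ is equivalent to its $\fq$-order (the monic divisor of $x^n-1$ annihilating $\alpha$ under the Frobenius action) being exactly $(x^n-1)/(x-1)$ when $p\nmid n$, or an analogous explicit divisor when $p\mid n$; this is a linearized-polynomial condition exactly parallel to primitivity being a "$g$-free" condition. Thus I want to count elements that are simultaneously primitive and have a prescribed $\fq$-order, and the cleanest way to pin down the $\fq$-order is to additionally prescribe the trace $\Tr_{\fqn/\fq}(\alpha)$ (or a trace to an intermediate subfield) to be a fixed nonzero value — this is precisely why Section~\ref{sec:charactersum} develops a character sum for elements that are primitive, $g$-free, and have trace $\beta$ in a subfield. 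I would invoke that estimate: the count is $\theta(q^n-1)\,\theta(x^n-1)\, q^{-m} \cdot (\text{main term})$ plus an error term bounded by something like $W(q^n-1)\,W(x^n-1)\, q^{n/2}$ times lower-order factors, where $W$ denotes the number of squarefree divisors / number of distinct irreducible factors and $\theta$ the corresponding density. One then shows the main term dominates the error whenever $q^{n/2}$ is large compared to the relevant divisor counts, which handles all but finitely many $(q,n)$.

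The hard part will be twofold. First, the sieving: the crude bound $q^{n/2} > W(q^n-1)W(x^n-1)(\ldots)$ fails for many genuinely small cases and for $n$ with many prime factors or $q^n-1$ with many prime factors, so one needs the Cohen-type sieve inequality (writing the full condition as an intersection over prime divisors and trading a worse constant for a smaller set of "troublesome" primes) to push the bound down; getting the numerics to actually close is the delicate, computation-heavy core, and this is where I expect most of the work and the appendix estimations to go. Second, the case $p \mid n$ requires separate care because $x^n-1$ is not squarefree — one must identify the correct target $\fq$-order, verify that a $1$-normal element of that order exists at all, and redo the character-sum bookkeeping with the non-squarefree modulus (the relevant "$\Phi$-functions" on $\fq[x]$ behave differently); I would treat this in Section~\ref{sec:pdivn}. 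Finally, for every $(q,n)$ not covered by the analytic bound — including $n \in \{3,4,5\}$ with $q \ge 11$, all small $q$ such as $q=2$, and the sporadic $p\mid n$ cases — I would exhibit an explicit primitive $1$-normal element by the computer search whose pseudocode is in Appendix~\ref{sec:pseudocode}, checking primitivity via factorization of $q^n-1$ and $1$-normality via a $\gcd$ of $x^n-1$ with the "additive order" polynomial of the candidate.
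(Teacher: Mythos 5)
Your overall architecture --- non-existence for $n=2$ via $\alpha^q=c\alpha$ forcing small multiplicative order, Theorem~\ref{thm:mainpanario} for the bulk of the $p\nmid n$ case, the primitive/$f$-free/prescribed-trace character sum with $W$-estimates for the remainder, and computer search for the sporadic failures --- is indeed the paper's architecture, and your $n=2$ argument is essentially Lemma~\ref{lem:nonexistn-1}. Two smaller corrections: in the $p\nmid n$ case the trace is prescribed to be \emph{zero}, not a nonzero value (a $\frac{x^n-1}{x-1}$-free element is $0$- or $1$-normal, and vanishing trace rules out $0$-normality); and the paper never deploys a genuine Cohen--Huczynska sieve, getting by with the crude Inequality~\eqref{sieve} sharpened by ad hoc bounds on $W$ (e.g.\ that every prime divisor of $\frac{q^{ps}-1}{q^s-1}$ is $\equiv 1\pmod{2p}$).

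The genuine gap is in your treatment of $p\mid n$. You propose to ``redo the character-sum bookkeeping with the non-squarefree modulus,'' but for $n=p$ (and essentially for $n=2p$) no such bookkeeping exists: $x^p-1=(x-1)^p$, and $m$-freeness only detects \emph{whether} an irreducible factor divides $\frac{x^n-1}{m_{\sigma_q,\alpha}}$, never with what multiplicity, so no combination of freeness and prescribed trace has characteristic function supported exactly on the elements of $\sigma_q$-order $(x-1)^{p-1}$ --- trace zero only tells you $k\ge 1$, and $(x-1)$-freeness only distinguishes $k=0$. The paper needs two further ideas here. For $n=p,2p$ it argues by contradiction and counting: if no primitive $1$-normal exists, then every primitive trace-zero (and, for $n=2p$, $(x+1)$-free) element is annihilated by $(x-1)^{p-2}$ (times $(x+1)^p$), and there are only $q^{n-2}$ such elements in all of $\fqn$, contradicting the character-sum lower bound of order $q^{n-1}/\log q$ for the number of such primitive elements. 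For $p^2\mid n$ it avoids character sums entirely: writing $n=p\cdot ps$, it shows $\alpha$ has $\sigma_q$-order $\frac{x^n-1}{x-1}$ if and only if $\Tr_{q^n/q^{ps}}(\alpha)$ has $\sigma_q$-order $\frac{x^{ps}-1}{x-1}$, and then invokes Cohen's theorem on primitive elements with prescribed subfield trace. Without some such device your plan stalls exactly at $n=p$, $n=2p$, and $p^2\mid n$.
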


\section{Background material}\label{sec:background}

\subsection{Finite fields as Frobenius-modules}
We follow the description of finite fields as Frobenius modules, as in~\cite{lenstra}. Recall that the Frobenius $q$-automorphism $\sigma_q\colon \fqn\to\fqn$ is a $\fq$-linear map. For any $f(x) = \sum_{i=0}^{s} a_i x^i \in \fq[x]$, define
\[ f \circ \alpha = \sum_{i=0}^{s} a_i \sigma_q^i(\alpha) = \sum_{i=0}^s a_i \alpha^{q^i},\]
for any $\alpha$ in the algebraic closure of $\fq$. For any $\alpha \in \fqn$, we observe that $(x^n-1)\circ \alpha = 0$.

If $f(x) = \sum_{i=0}^{n-1} a_i x^i$, then define $F(x) = \sum_{i=0}^{n-1} a_i x^{q^i}$. The polynomial $F$ is the \emph{linearized $q$-associate of $f$} and $f$ is the \emph{conventional $q$-associate of $F$}. We outline some of the nice properties of $q$-associate polynomials. For the remainder of this section, lower-case polynomials represent conventional $q$-associates of their linearized $q$-associate with corresponding capital letters. 

\begin{proposition}\textup{\cite[Theorem 3.62]{LN}}\label{prop:linearized}
\begin{enumerate}
\item The following are equivalent:
\begin{enumerate}
\item $H = F(G_1)$, for some linearized polynomial $G_1 \in \fq[x]$.
\item $H = FG_2$, for some linearized polynomial $G_2 \in \fq[x]$.
\item $h = fg$, for some $g \in \fq[x]$.
\end{enumerate}
\item Let $h = f + g$, then $h \circ \alpha = f \circ \alpha + g \circ \alpha$.
\item Let $h = fg$, then $H = F(G)$; hence $(fg)\circ \alpha = F\circ(g\circ\alpha) = F(G(\alpha))$. 
\end{enumerate}
\end{proposition}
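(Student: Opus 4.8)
The plan is to treat the whole proposition as a consequence of a single structural fact: the coefficient-transfer map $\Phi\colon \fq[x]\to\mathcal{L}$, sending $f=\sum_i a_i x^i$ to its linearized $q$-associate $F=\sum_i a_i x^{q^i}$, is an isomorphism of commutative $\fq$-algebras, where $\mathcal{L}$ denotes the $q$-linearized polynomials over $\fq$ equipped with addition and \emph{composition} as its multiplication. First I would record that $\Phi$ is an $\fq$-linear bijection (it merely relabels coefficient vectors) and that $\Phi(f+g)=F+G$ is immediate. The multiplicative statement $\Phi(fg)=F\circ G$ is the heart of this lemma: I would verify it on monomials, where $f=x^i$, $g=x^j$ give $fg=x^{i+j}$ and $F\circ G=(x^{q^j})^{q^i}=x^{q^{i+j}}=\Phi(x^{i+j})$, and then extend by bilinearity. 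The one place care is needed is that this extension uses the identity $\left(\sum_j b_j x^{q^j}\right)^{q^i}=\sum_j b_j x^{q^{i+j}}$, which relies crucially on $b_j\in\fq$ so that $b_j^{q^i}=b_j$; this is exactly why the correspondence between multiplication and composition is special to the ground field $\fq$. A free consequence is that composition on $\mathcal{L}$ is commutative and associative.

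Given the lemma, parts (2) and (3) are bookkeeping. Observing that the operator action satisfies $f\circ\alpha=F(\alpha)$ by definition, part (2) follows from $\Phi(f+g)=F+G$ evaluated at $\alpha$, and part (3) follows from $\Phi(fg)=F\circ G$, which gives $(fg)\circ\alpha=H(\alpha)=F(G(\alpha))=F(g\circ\alpha)$.

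The substance is part (1), which I would prove as the chain (a)$\Leftrightarrow$(c), (a)$\Rightarrow$(b), (b)$\Rightarrow$(c). The equivalence (a)$\Leftrightarrow$(c) is immediate from the lemma: $H=F\circ G_1$ says $H=\Phi(fg_1)$ with $g_1=\Phi^{-1}(G_1)$, i.e.\ $h=fg_1$, which is precisely $f\mid h$. For (a)$\Rightarrow$(b) I would use commutativity of composition to write $H=F\circ G_1=G_1\circ F=\sum_j b_j F(x)^{q^j}$, every summand of which is visibly divisible by $F(x)$ in $\fq[x]$, so $F\mid H$ in the ordinary sense.

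The main obstacle is the reverse implication (b)$\Rightarrow$(c): that ordinary divisibility $F\mid H$ forces the a priori stronger-looking symbolic divisibility $f\mid h$. My plan is a division-with-remainder argument transported through $\Phi$. Dividing $h$ by $f$ in $\fq[x]$ gives $h=fg+r$ with $\deg r<\deg f$, and applying $\Phi$ yields $H=F\circ G+R$ with $\deg R=q^{\deg r}<q^{\deg f}=\deg F$. Now $F\mid H$ by hypothesis and $F\mid F\circ G$ by the already-proven (a)$\Rightarrow$(b), so $F\mid R$; since $\deg R<\deg F$ this forces $R=0$, whence $h=fg$. I expect this interplay, combining the composition structure of $\mathcal{L}$ with the degree bound from the ordinary division algorithm, to be the one genuinely non-formal step; the degenerate case $f=0$ (equivalently $F=0$), where every condition reduces to $H=0=h$, I would dispatch separately at the outset.
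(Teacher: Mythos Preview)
The paper does not supply its own proof of this proposition; it is quoted directly from \cite[Theorem~3.62]{LN} and stated without argument. So there is nothing in the paper to compare your proof against.

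That said, your proof is correct and is essentially the classical argument one finds in Lidl--Niederreiter. The key structural lemma---that $\Phi$ is an $\fq$-algebra isomorphism from $(\fq[x],+,\cdot)$ to the $q$-linearized polynomials under $(+,\circ)$---is exactly what drives the proof there, and your verification on monomials together with the Frobenius-fixes-$\fq$ observation is the right way to establish it. Your handling of the nontrivial implication (b)$\Rightarrow$(c) via division with remainder in $\fq[x]$, transporting through $\Phi$, and using the already-proven (a)$\Rightarrow$(b) to force the remainder to vanish, is also the standard route. The degree bookkeeping ($\deg R = q^{\deg r} < q^{\deg f} = \deg F$) and the separate disposal of the $f=0$ case are both handled cleanly. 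Nothing is missing.
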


Since $(x^n-1)\circ\alpha = 0$ for all $\alpha \in \fqn$, we define the \emph{minimal polynomial of $\alpha$ with respect to $\sigma_q$}, denoted $m_{\sigma_q, \alpha}$, to be the monic polynomial of minimal degree for which $m_{\sigma_q, \alpha} \circ \alpha = 0$. We note that this is well defined, since if $m$ and $m'$ are two such minimal polynomials, then $(m - m')\circ \alpha = m\circ \alpha - m' \circ \alpha = 0$, but $m-m'$ has degree at most $n-1$, contradicting the minimality of $m$ and $m'$ unless $m = m'$. 

We obtain the following vital proposition immediately from Proposition~\ref{prop:linearized}. 

\begin{proposition}
For all $\alpha \in \fqn$, $m_{\sigma_q, \alpha}(x)$ divides $x^n-1$. 
\end{proposition}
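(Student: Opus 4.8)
The plan is to run the usual division-algorithm argument, transported through the conventional/linearized $q$-associate dictionary of Proposition~\ref{prop:linearized}. First I would write the division of $x^n - 1$ by $m_{\sigma_q, \alpha}$ in $\fq[x]$: since $m_{\sigma_q,\alpha}$ is monic, there exist unique $q(x), r(x) \in \fq[x]$ with $x^n - 1 = q(x)\, m_{\sigma_q,\alpha}(x) + r(x)$ and $\deg r < \deg m_{\sigma_q,\alpha}$. The goal is to show $r = 0$.

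Next I would apply the operator $(\cdot)\circ\alpha$ to both sides. By part (2) of Proposition~\ref{prop:linearized}, $(x^n-1)\circ\alpha = \big(q\, m_{\sigma_q,\alpha}\big)\circ\alpha + r\circ\alpha$, and by part (3) the product term satisfies $\big(q\, m_{\sigma_q,\alpha}\big)\circ\alpha = Q\big(m_{\sigma_q,\alpha}\circ\alpha\big)$, where $Q$ is the linearized $q$-associate of $q$. Since $m_{\sigma_q,\alpha}\circ\alpha = 0$ by definition of the minimal polynomial, and $Q$ is a linearized (hence additive) polynomial with $Q(0)=0$, this term vanishes. On the left, $(x^n-1)\circ\alpha = 0$ because $\alpha\in\fqn$ means $\alpha^{q^n}=\alpha$. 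Therefore $r\circ\alpha = 0$.

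Finally, $r$ is a polynomial of degree strictly less than $\deg m_{\sigma_q,\alpha}$ with $r\circ\alpha = 0$; by the minimality of $m_{\sigma_q,\alpha}$ among nonzero polynomials annihilating $\alpha$ under $\circ$, we must have $r = 0$ (if $r\neq 0$, normalizing to make it monic would contradict minimality of the degree of $m_{\sigma_q,\alpha}$). Hence $x^n - 1 = q(x)\, m_{\sigma_q,\alpha}(x)$, i.e.\ $m_{\sigma_q,\alpha}(x)$ divides $x^n-1$ in $\fq[x]$. There is no real obstacle here — the only thing to be slightly careful about is invoking the correct parts of Proposition~\ref{prop:linearized} so that the product $q\cdot m_{\sigma_q,\alpha}$ acts on $\alpha$ as composition of the associated linearized maps, which is exactly what forces the middle term to die once $m_{\sigma_q,\alpha}\circ\alpha=0$.
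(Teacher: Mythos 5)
Your proof is correct and is exactly the standard division-algorithm argument that the paper leaves implicit when it says the proposition follows ``immediately'' from Proposition~\ref{prop:linearized}: divide $x^n-1$ by the monic $m_{\sigma_q,\alpha}$, kill the product term via the composition property, and invoke minimality to force the remainder to vanish. The only cosmetic quibble is that naming the quotient $q(x)$ collides with $q$ the prime power; otherwise nothing to add.
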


Suppose $m_{\sigma_q, \alpha}$ has degree $n-k$ for $0 < k < n$, then $m_{\sigma_q, \alpha}$ is a non-trivial divisor of $x^n-1$; moreover, if $m_{\sigma_q, \alpha}(x) = \sum_{i=0}^{n-1} a_i x^i$, then $0 = m_{\sigma_q, \alpha} \circ \alpha = \sum_{i=0}^{n-1} a_i \alpha^{q^{i}}$ is a vanishing non-trivial linear combination of the Galois conjugates of $\alpha$; hence, $\alpha$ is not a normal element of $\fqn$ over $\fq$. Moreover, by the minimality of $m_{\sigma_q, \alpha}$, $\alpha$ is not annihilated by any non-trivial linear combination of $\{\alpha, \alpha^q, \ldots, \alpha^{q^{n-k-1}}\}$, hence $\alpha$ is $k$-normal in $\fqn$ over $\fq$. We summarize this below.

\begin{proposition}\label{prop:k-normaliff}
Any $\alpha \in \fqn$ is $k$-normal over $\fq$ if and only if $m_{\sigma_q, \alpha} \in \fq[x]$, the minimal polynomial of $\alpha$ under $\sigma_q$, has degree $n-k$. 
\end{proposition}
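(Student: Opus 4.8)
The plan is to establish the slightly more precise statement that $\dim_{\fq} V = \deg m_{\sigma_q, \alpha}$ for every $\alpha \in \fqn$, where $V = \mathrm{Span}_{\fq}(\alpha, \alpha^q, \ldots, \alpha^{q^{n-1}})$; the proposition then follows immediately by unwinding Definition~\ref{def:k-normal}, since $\alpha$ is $k$-normal precisely when $\dim_{\fq} V = n-k$.

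To prove the displayed equality, write $d = \deg m_{\sigma_q,\alpha}$ and $m_{\sigma_q,\alpha}(x) = x^d + \sum_{i=0}^{d-1} a_i x^i$ with each $a_i \in \fq$, and set $W = \mathrm{Span}_{\fq}(\alpha, \alpha^q, \ldots, \alpha^{q^{d-1}})$. First I would read off from $m_{\sigma_q,\alpha} \circ \alpha = 0$ the identity $\alpha^{q^d} = -\sum_{i=0}^{d-1} a_i \alpha^{q^i} \in W$. Applying the $\fq$-linear map $\sigma_q$ to this identity $j$ times gives $\alpha^{q^{d+j}} = -\sum_{i=0}^{d-1} a_i \alpha^{q^{i+j}}$, and a short induction on $j \ge 0$ then shows $\alpha^{q^\ell} \in W$ for every $\ell \ge 0$; hence $V = W$ and $\dim_{\fq} V \le d$. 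For the reverse inequality, I would argue that $\alpha, \alpha^q, \ldots, \alpha^{q^{d-1}}$ are $\fq$-linearly independent: any nontrivial dependence among them would produce a nonzero $g \in \fq[x]$ with $\deg g \le d-1$ and $g \circ \alpha = 0$, contradicting the minimality of $m_{\sigma_q,\alpha}$. Therefore $\dim_{\fq} V = d$.

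A more conceptual route, which I would mention as an alternative, is to use the $\fq[x]$-module structure on $\fqn$ in which $x$ acts as $\sigma_q$, so that $f \cdot \alpha = f \circ \alpha$. By Proposition~\ref{prop:linearized} the set of $f$ with $f \circ \alpha = 0$ is an ideal of $\fq[x]$, necessarily principal with monic generator $m_{\sigma_q,\alpha}$, so the cyclic submodule $\fq[x]\cdot\alpha$ is isomorphic as an $\fq$-vector space to $\fq[x]/(m_{\sigma_q,\alpha})$, of dimension $\deg m_{\sigma_q,\alpha}$; and since $(x^n-1)\cdot\alpha = 0$ this submodule is exactly $V$.

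There is no real obstacle here; the argument is elementary linear algebra. The only step needing a little care is the induction, i.e. the observation that a single top-degree annihilating relation propagates under Frobenius and collapses the entire Galois orbit of $\alpha$ into the span of its first $d$ conjugates — which is essentially the fact that the minimal annihilating polynomial of a cyclic module element governs the $\fq$-dimension of the submodule it generates.
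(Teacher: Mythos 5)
Your argument is correct and is essentially the paper's own: the paper likewise deduces $k$-normality from the fact that the minimality of $m_{\sigma_q,\alpha}$ forces $\alpha,\alpha^q,\ldots,\alpha^{q^{n-k-1}}$ to be $\fq$-linearly independent while the degree-$(n-k)$ annihilating relation exhibits the remaining conjugates as dependent on them. Your write-up merely fills in the detail the paper leaves implicit (the induction propagating the top-degree relation under $\sigma_q$ so that \emph{all} conjugates fall into the span of the first $d$), and the module-theoretic alternative you sketch is a clean restatement of the same fact.
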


We observe that $x^n-1 = (x-1) (1 + x + \cdots + x^{n-1})$ for any field. Moreover, $(1 + x + \cdots + x^{n-1})\circ \alpha = \alpha + \alpha^q + \cdots + \alpha^{q^{n-1}} = \Tr_{q^n/q}(\alpha)$, where $\Tr_{q^n/q}$ is the trace map from $\fqn$ to $\fq$. 

\begin{corollary}\label{cor:extreme-k}
\begin{enumerate}
\item We have $\alpha \in \fq$ if and only if $m_{\sigma_q, \alpha}(x) = (x-1)$.
\item Let $\Tr_{q^n/q}(\alpha) = 0$, then $\alpha$ is not $0$-normal.
\end{enumerate}
\end{corollary}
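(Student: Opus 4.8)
The plan is to derive both parts directly from Proposition~\ref{prop:k-normaliff}, together with the factorization $x^n-1 = (x-1)(1+x+\cdots+x^{n-1})$ and the identity $(1+x+\cdots+x^{n-1})\circ\alpha = \Tr_{q^n/q}(\alpha)$ recorded just above the statement. The one technical ingredient I would invoke is the divisibility property of $m_{\sigma_q,\alpha}$: if $f\in\fq[x]$ satisfies $f\circ\alpha = 0$, then $m_{\sigma_q,\alpha}\mid f$. This is the same Euclidean-division argument already used to show $m_{\sigma_q,\alpha}\mid x^n-1$: write $f = g\, m_{\sigma_q,\alpha} + r$ with $\deg r < \deg m_{\sigma_q,\alpha}$; by Proposition~\ref{prop:linearized} one gets $r\circ\alpha = f\circ\alpha - G(m_{\sigma_q,\alpha}\circ\alpha) = 0$, so minimality forces $r = 0$.

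For part (1), first I would note that $\alpha\in\fq$ is equivalent to $\alpha^q = \alpha$, i.e.\ to $(x-1)\circ\alpha = 0$. Thus if $\alpha\in\fq$ then $m_{\sigma_q,\alpha}\mid x-1$, and since $m_{\sigma_q,\alpha}$ is monic of positive degree when $\alpha\ne 0$ (the only monic polynomial of smaller degree is $1$, and $1\circ\alpha = \alpha\ne 0$), we conclude $m_{\sigma_q,\alpha} = x-1$. Conversely, $m_{\sigma_q,\alpha} = x-1$ gives $\alpha^q - \alpha = 0$, hence $\alpha\in\fq$. The element $\alpha = 0$ is a degenerate exception (with $m_{\sigma_q,0} = 1$), handled by the implicit convention $\alpha\ne 0$.

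For part (2), I would substitute $\Tr_{q^n/q}(\alpha) = 0$ into the identity above, obtaining $(1+x+\cdots+x^{n-1})\circ\alpha = 0$; by the divisibility property, $m_{\sigma_q,\alpha}$ divides $1+x+\cdots+x^{n-1}$, a polynomial of degree $n-1$. Hence $\deg m_{\sigma_q,\alpha}\le n-1$, and Proposition~\ref{prop:k-normaliff} shows $\alpha$ is $k$-normal with $k = n - \deg m_{\sigma_q,\alpha}\ge 1$; in particular $\alpha$ is not $0$-normal.

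I do not anticipate any real obstacle: the statement is an immediate corollary of the machinery already set up. The only points requiring a word of care are the trivial $\alpha = 0$ case in part (1) and making explicit that $m_{\sigma_q,\alpha}$ divides every annihilating polynomial — which is exactly the argument already given for the divisor $x^n-1$.
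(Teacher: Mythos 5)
Your proof is correct and follows exactly the route the paper intends: the corollary is stated without proof there, as an immediate consequence of Proposition~\ref{prop:k-normaliff} together with the factorization $x^n-1=(x-1)(1+x+\cdots+x^{n-1})$ and the identity $(1+x+\cdots+x^{n-1})\circ\alpha=\Tr_{q^n/q}(\alpha)$, which is precisely what you use. Your explicit treatment of the divisibility of annihilators and of the degenerate case $\alpha=0$ in part (1) is a sensible filling-in of details the paper leaves implicit.
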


Finally, we observe that every monic factor of $x^n-1$ of degree $n-k$ does indeed produce $k$-normal elements. In fact, the following theorem which also appears in~\cite[Corollary 3.71]{LN} can be shown by a counting argument. 

\begin{theorem}
Let $f$ be a monic divisor of $x^n-1$. Then there exists $\alpha \in \fqn$ for which $f = m_{\sigma_q, \alpha}$. 
\end{theorem}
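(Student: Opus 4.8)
The plan is to derive this from the existence of a normal element (the classical normal basis theorem, recalled in the introduction) together with the multiplicativity of the $q$-associate correspondence in Proposition~\ref{prop:linearized}. The idea is that a normal element $\theta$ generates $\fqn$ as a cyclic $\fq[x]/(x^n-1)$-module, and one obtains an element with prescribed "order" $f$ simply by applying the complementary divisor to $\theta$.

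First I would fix a normal element $\theta \in \fqn$ over $\fq$. By Proposition~\ref{prop:k-normaliff} (with $k=0$), $m_{\sigma_q,\theta}(x) = x^n-1$; equivalently, $g\circ\theta \neq 0$ for every nonzero $g \in \fq[x]$ with $\deg g < n$. Next, given a monic divisor $f$ of $x^n-1$, write $x^n-1 = f\cdot g$ with $g \in \fq[x]$ monic, and set $\alpha = g\circ\theta \in \fqn$. By Proposition~\ref{prop:linearized}(3), $f\circ\alpha = f\circ(g\circ\theta) = (fg)\circ\theta = (x^n-1)\circ\theta = 0$, so the minimal polynomial $m := m_{\sigma_q,\alpha}$ divides $f$ (by the usual division-algorithm argument: any polynomial annihilating $\alpha$ is a multiple of $m$, by minimality of $\deg m$).

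It remains to rule out $m \neq f$. Write $f = m\cdot h$ with $h \in \fq[x]$ monic. From $m\circ\alpha = 0$ and $\alpha = g\circ\theta$ we get $(mg)\circ\theta = m\circ(g\circ\theta) = 0$, and $\deg(mg) = \deg m + \deg g = (\deg f - \deg h) + \deg g = n - \deg h$. If $m \neq f$ then $\deg h \geq 1$, so $mg$ is a nonzero polynomial of degree at most $n-1$ annihilating $\theta$, contradicting normality of $\theta$. Hence $\deg h = 0$, i.e. $h = 1$ and $m = f$, so $m_{\sigma_q,\alpha} = f$ as desired.

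I do not expect a serious obstacle here; the only points requiring care are the bookkeeping of degrees in the last step and the correct invocation of the minimality built into $m_{\sigma_q,\cdot}$. As an alternative matching the counting argument alluded to via~\cite[Corollary 3.71]{LN}, one can instead show that, under the module isomorphism $\fqn \cong \fq[x]/(x^n-1)$ furnished by a normal element, the number of $\alpha$ with $m_{\sigma_q,\alpha} = f$ equals the number of polynomials of degree less than $\deg f$ that are coprime to $f$ (a polynomial Euler totient $\Phi_q(f) \geq 1$), and that these counts sum to $q^n$ over all monic divisors of $x^n-1$; this yields the exact count but needs the extra module-theoretic setup.
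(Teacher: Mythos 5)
Your argument is correct, but it takes a different route from the one the paper points to. The paper gives no proof of this theorem; it simply remarks that the statement ``can be shown by a counting argument'' and cites \cite[Corollary 3.71]{LN}, where one counts the elements with $\sigma_q$-order $f$ and finds there are exactly $\Phi_q(f)\geq 1$ of them, using $\sum_{f\mid x^n-1}\Phi_q(f)=q^n$. You instead deduce the result from the normal basis theorem: fixing a normal $\theta$ (so $m_{\sigma_q,\theta}=x^n-1$), writing $x^n-1=fg$, and taking $\alpha=g\circ\theta$, your degree bookkeeping is right --- $f\circ\alpha=(fg)\circ\theta=0$ forces $m_{\sigma_q,\alpha}\mid f$, and a proper divisor $m=f/h$ with $\deg h\geq 1$ would make $mg$ a nonzero annihilator of $\theta$ of degree $n-\deg h\leq n-1$, contradicting normality. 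This is a clean cyclic-module argument and is fully justified by the background the paper recalls (existence of normal elements, Proposition~\ref{prop:linearized}(3), and the division-algorithm characterization of $m_{\sigma_q,\cdot}$). The trade-off is that your primary proof presupposes the normal basis theorem, whereas the counting argument of \cite{LN} is independent of it and in fact yields the normal basis theorem as the special case $f=x^n-1$; your closing remark correctly identifies this alternative, including the exact count $\Phi_q(f)$, which is what Corollary~\ref{count} in the paper ultimately relies on.
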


We now focus on $1$-normal elements.

\begin{corollary}
Let $n \geq 2$, then there exists a $1$-normal element of $\fqn$ over $\fq$ for all $q$. 
\end{corollary}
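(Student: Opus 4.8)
The plan is to exhibit a $1$-normal element explicitly via the characterization in Proposition~\ref{prop:k-normaliff}: an element $\alpha \in \fqn$ is $1$-normal over $\fq$ if and only if its minimal polynomial $m_{\sigma_q,\alpha}$ is a monic divisor of $x^n-1$ of degree exactly $n-1$. So it suffices to produce one such divisor of $x^n-1$ and then invoke the theorem immediately preceding this corollary, which guarantees that every monic divisor of $x^n-1$ of degree $n-k$ is realized as $m_{\sigma_q,\alpha}$ for some $\alpha$.

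Concretely, I would argue as follows. Over any field $\fq$ we have the factorization $x^n-1 = (x-1)(x^{n-1}+x^{n-2}+\cdots+1)$. Set $g(x) = (x^n-1)/(x-1) = x^{n-1}+\cdots+x+1$; this is a monic divisor of $x^n-1$ of degree $n-1$. By the realization theorem quoted above, there exists $\alpha \in \fqn$ with $m_{\sigma_q,\alpha} = g$, and since $\deg g = n-1 = n-k$ with $k=1$, Proposition~\ref{prop:k-normaliff} shows this $\alpha$ is $1$-normal over $\fq$. Note this requires only $n \geq 2$ so that $g$ is a proper nontrivial divisor (degree strictly between $0$ and $n$); for $n \geq 2$ this is immediate.

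One subtlety worth flagging: for $n \geq 2$, $g(x) = (x^n-1)/(x-1)$ is a genuine divisor of degree $n-1$ regardless of whether $\gcd(n,q)=1$ — the factorization $x^n-1=(x-1)g(x)$ holds over every field — so no coprimality hypothesis is needed, and in the edge case $p \mid n$ the polynomial $x^n-1$ has repeated roots but $g$ is still a well-defined monic divisor of the correct degree. The only genuinely used ingredient is the preceding realization theorem (a counting argument, also \cite[Corollary 3.71]{LN}), applied to the single divisor $g$.

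There is essentially no obstacle here: the statement is a direct corollary of the two propositions just proved. If one wanted a more self-contained argument avoiding the realization theorem, one could instead start from a normal element $\beta$ of $\fqn$ over $\fq$ (which exists for all $q,n$ by Hensel/Ore) and set $\alpha = (x-1)\circ\beta = \beta^q - \beta$; then one checks $g \circ \alpha = g(x)(x-1)\circ\beta = (x^n-1)\circ\beta = 0$ while no divisor of $g$ of smaller degree annihilates $\alpha$ (using that $\beta$ is normal), so $m_{\sigma_q,\alpha} = g$ has degree $n-1$ and $\alpha$ is $1$-normal. Either route is short; I would present the first one as it directly uses the machinery already set up in this section.
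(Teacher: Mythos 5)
Your proof is correct and is essentially the paper's own argument: the paper's one-line proof exhibits the degree-$(n-1)$ monic divisor $(x^n-1)/(x-1)$ of $x^n-1$ and implicitly invokes the preceding realization theorem together with Proposition~\ref{prop:k-normaliff}, exactly as you spell out. Your remarks on the $p \mid n$ case and the alternative route via a normal element are fine but not needed.
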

\begin{proof} 
As noted previously, $x^n-1 = (x-1)(1 + x + \cdots + x^{n-1})$ for all $n \geq 2$. 
\end{proof}

We use Proposition~\ref{prop:k-normaliff} to give an exact count of the number of $k$-normal elements of $\fqn$ over $\fq$. First, we require the polynomial analogue of the Euler totient function.

\begin{define}
Let $f(x)$ be a monic polynomial with coefficients in $\F_q$. The Euler Phi Function for polynomials over $\F_q$ is given by $$\Phi_q(f)=\left |\left(\frac{\F_q[x]}{\langle f\rangle}\right)^{*}\right |,$$ where $\langle f\rangle$ is the ideal generated by $f(x)$ in $\F_q[x]$. 
\end{define}

Finally, we present an interesting formula for the number of $k$-normal elements over finite fields:

\begin{corollary}[\cite{HMPT}, Theorem 3.5] \label{count} The number $N_k$ of $k$-normal elements of $\F_{q^n}$ over $\F_q$ is given by
\begin{equation}\label{eq2} N_k = \sum_{h|x^n-1\atop{\deg(h)=n-k}}\Phi_q(h),\end{equation}
where the divisors are monic and polynomial division is over $\F_q$.
\end{corollary}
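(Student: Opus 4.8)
The plan is to prove Corollary~\ref{count} by combining the exact characterization of $k$-normal elements from Proposition~\ref{prop:k-normaliff} with the classical orbit-counting structure of the Frobenius module $\fqn$. First I would recall that, by Proposition~\ref{prop:k-normaliff}, an element $\alpha \in \fqn$ is $k$-normal over $\fq$ if and only if $m_{\sigma_q,\alpha}$ has degree exactly $n-k$; since $m_{\sigma_q,\alpha}$ always divides $x^n-1$, the set of $k$-normal elements partitions according to which monic divisor $h$ of $x^n-1$ with $\deg h = n-k$ equals $m_{\sigma_q,\alpha}$. Thus
\[
N_k \;=\; \sum_{\substack{h \mid x^n-1 \\ \deg h = n-k}} \#\{\alpha \in \fqn : m_{\sigma_q,\alpha} = h\},
\]
and the whole problem reduces to showing that for each monic divisor $h$ of $x^n-1$ the number of elements whose $\sigma_q$-minimal polynomial is exactly $h$ equals $\Phi_q(h)$.

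The key step is therefore the following module-theoretic fact: viewing $\fqn$ as a module over the principal ideal domain $R = \fq[x]$ via $x \cdot \alpha = \sigma_q(\alpha) = \alpha^q$, the module $\fqn$ is cyclic and isomorphic to $R/\langle x^n-1\rangle$ (this is precisely the content that normal elements exist — a normal $\alpha$ is an $R$-module generator, and the existence theorem quoted just before the corollary, \cite[Corollary 3.71]{LN}, gives the cyclic structure). Under any such isomorphism $\fqn \cong R/\langle x^n-1\rangle$, the $\sigma_q$-minimal polynomial $m_{\sigma_q,\alpha}$ is exactly the ``order'' of $\alpha$ as a module element, i.e.\ the monic generator of its annihilator ideal. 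So I would count elements of $R/\langle x^n-1\rangle$ with annihilator ideal equal to $\langle h\rangle$: such an element generates the submodule $\langle h \rangle / \langle x^n - 1\rangle \cong R/\langle (x^n-1)/h\rangle$ wait — more cleanly, the elements of annihilator exactly $\langle h\rangle$ are precisely the generators of the unique submodule isomorphic to $R/\langle h\rangle$, and the number of generators of $R/\langle h\rangle$ as an $R$-module is $|(R/\langle h\rangle)^*| = \Phi_q(h)$, since a residue $g + \langle h\rangle$ generates $R/\langle h\rangle$ iff $\gcd(g,h)=1$.

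Assembling these pieces gives $\#\{\alpha : m_{\sigma_q,\alpha} = h\} = \Phi_q(h)$ for every monic $h \mid x^n-1$, and summing over those $h$ of degree $n-k$ yields \eqref{eq2}. As a sanity check one can verify $\sum_{h \mid x^n-1}\Phi_q(h) = q^n$, which matches the identity $\sum_{d \mid f}\Phi_q(d) = q^{\deg f}$, confirming that every element of $\fqn$ is accounted for exactly once. The main obstacle is the justification that $\fqn$ is a cyclic $\fq[x]$-module under $\sigma_q$ with annihilator $\langle x^n-1\rangle$; however, this is exactly the existence of normal bases together with the fact that $x^n-1$ is the $\sigma_q$-minimal polynomial of any normal element, both already available from the material preceding the corollary, so the proof is essentially a bookkeeping argument once that structural statement is invoked. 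An alternative, slightly more self-contained route avoids explicit module language: directly show that for each divisor $h \mid x^n-1$ the $\fq$-linear map $\alpha \mapsto ((x^n-1)/h)\circ\alpha$ has image equal to $\ker(h \circ (-))$, use Proposition~\ref{prop:linearized} to identify $\{\alpha : h \circ \alpha = 0\}$ as the set of roots of the linearized $q$-associate of $h$ (a subspace of size $q^{\deg h}$), and then perform an inclusion–exclusion / Möbius argument over divisors of $h$ to extract those with minimal polynomial exactly $h$; this reproduces the polynomial totient identity and hence $\Phi_q(h)$ directly.
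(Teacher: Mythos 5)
Your argument is correct. The paper itself offers no proof of this corollary --- it is quoted directly from [HMPT, Theorem~3.5] --- but your route (partition the $k$-normal elements by their $\sigma_q$-minimal polynomial $h$ via Proposition~\ref{prop:k-normaliff}, then use the cyclic $\fq[x]$-module structure $\fqn \cong \fq[x]/\langle x^n-1\rangle$ guaranteed by the normal basis theorem to count exactly $\Phi_q(h)$ elements of $\sigma_q$-order $h$) is precisely the standard argument underlying the cited result, and your self-correction identifying the relevant submodule as the one isomorphic to $\fq[x]/\langle h\rangle$ lands on the right statement.
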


\subsection{Characters, characteristic functions and Gauss sums}\label{sec:charsumbground}
For the background on characteristic functions for our elements of interest, we follow the treatment in~\cite{HMPT}, which is essentially the same as in~\cite{cohen} and elsewhere. 

Let $\fqn^* = \langle \alpha \rangle$ and for a fixed integer $j$ with $0 \leq d < n$, define the map
\[ \eta_d(\alpha^k) = e^{\frac{2\pi i dk}{q^n-1}}, \quad k = 0, 1, \ldots, n-1.\]
The map $\eta_d\colon$ is a \emph{multiplicative character} of $\fqn$. Observe that $\eta_d(\alpha^s)\eta_d(\alpha^t) = \eta_d(\alpha^{s+t})$; hence, each $\eta_d$ is a homomorphism $\fqn^*\to \mathbb{C}^*$. Moreover, since $\alpha^{q^n-1} = 1$, we have $\eta_d(\alpha^k)^{q^n-1} = 1$; hence, $\eta_d$ maps to the unit circle in $\mathbb{C}$. Finally, since $\eta_d(\alpha^k) = \eta_1(\alpha^k)^d$ for all $d$; and the set of multiplicative characters of $\fqn$ form a cyclic group of order $q^n-1$ with identity element $\eta_0$. 

Let $p$ be the characteristic of $\fq$ and denote by $\chi_1$ the mapping $\chi_1 \colon \fqn \to \mathbb{C}$ by 
\[\chi_1(\alpha) = e^{\frac{2\pi i \Tr_{q/p}(\alpha)}{p}} \quad \alpha \in \fq.\]
The character $\chi_1$ is the \emph{canonical additive character} of $\fq$. Moreover, the mapping $\chi_\beta(\alpha) = \chi_1(\beta\alpha)$ for all $\alpha \in \fq$ is an additive character of $\fq$, and all additive characters of $\fq$ arise as $\chi_\beta$ for some $\beta$. 

Let $\fqn$ be an extension of $\fq$ with canonical additive character $\mu_1$. Then $\mu_1$ and $\chi_1$ are related by transitivity of the trace function, namely
\[ \mu_1(\alpha) = \chi_1(\Tr_{q^n/q}(\alpha)), \quad \alpha \in \fqn. \]
We call $\mu_1$ the lift of $\chi_1$ from $\fq$ to $\fqn$, and will make use of this fact later. 

\subsubsection{On freeness}
The notion of \emph{freeness} follows from the Vinogradov formulas, as follows. 

\begin{define}\label{def:free}
\begin{enumerate}
\item Let $d$ divide $q^n-1$, then $\alpha \in \fqn^*$ is $d$-free if $\alpha = \beta^m$ for any divisor $m$ of $d$ implies $d=1$. 
\item Let $m$ be a divisor of $x^n-1$, then $\alpha$ is $m$-free if $\alpha = h \circ \beta$ for any divisor $h$ of $m$ implies $h=1$. 
\end{enumerate}
\end{define}

We use a re-characterization of free elements that can be found in~\cite[Propositions 5.2, 5.3]{HMPT}, for example. 

\begin{proposition}\label{prop:free}
\begin{enumerate}
\item $\alpha \in \fqn^*$ is $d$-free if and only if $\gcd(d, \frac{q^n-1}{\ord(\alpha)}) = 1$, where $\ord(\alpha)$ is the multiplicative order of $\alpha$. 
\item $\alpha \in \fqn$ is $m$-free if and only if $\gcd(m, \frac{x^n-1}{m_{\sigma_q, \alpha}(x)}) = 1$. 
\end{enumerate}
\end{proposition}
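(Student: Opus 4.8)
The plan is to prove the two parts in parallel, since they are the multiplicative and the additive (Frobenius-module) versions of one and the same argument. The structural input I would use is \emph{cyclicity}: $\fqn^*$ is cyclic of order $q^n-1$, and — this is exactly the existence of a normal element, due to Hensel/Ore — $\fqn$ is a free module of rank one over $\fq[x]/(x^n-1)$ under the action $f\circ\alpha$. Under these identifications, ``$\alpha$ is a nontrivial $m$-th power'' and ``$\alpha=h\circ\beta$ for some $\beta$'' each become divisibility conditions, and ``$d$-free'' / ``$m$-free'' then reduce, by passing to prime / irreducible factors, to a single $\gcd$ identity.

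For part (1) I would fix a generator $\gamma$ of $\fqn^*$ and write $\alpha=\gamma^t$, so that $(q^n-1)/\ord(\alpha)=\gcd(t,q^n-1)$. Writing $\beta=\gamma^s$ turns $\alpha=\beta^m$ into the congruence $ms\equiv t\pmod{q^n-1}$, which is solvable precisely when $\gcd(m,q^n-1)\mid t$; in particular, for $m$ a divisor of $d$ (hence of $q^n-1$) this says $m\mid t$. Thus $\alpha$ fails to be $d$-free iff it is a nontrivial $m$-th power for some $m>1$ dividing $d$, iff (taking a prime $\ell\mid m$, so that $\alpha=(\beta^{m/\ell})^\ell$) some prime $\ell\mid d$ divides $t$. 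Negating and using $\gcd(d,q^n-1)=d$ to collapse $\gcd(d,\gcd(t,q^n-1))$ to $\gcd(d,t)$ yields $\gcd(d,(q^n-1)/\ord(\alpha))=1$.

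For part (2) I would fix a normal element $w$, giving an $\fq[x]$-module isomorphism $\fq[x]/(x^n-1)\cong\fqn$ via $f\mapsto f\circ w$; write $\alpha=g\circ w$. By Proposition~\ref{prop:linearized}, $f\circ\alpha=(fg)\circ w=0$ iff $(x^n-1)\mid fg$ iff $(x^n-1)/\gcd(g,x^n-1)$ divides $f$, so $m_{\sigma_q,\alpha}=(x^n-1)/\gcd(g,x^n-1)$ and hence $(x^n-1)/m_{\sigma_q,\alpha}=\gcd(g,x^n-1)$. The same computation modulo $x^n-1$ shows $\alpha=h\circ\beta$ is solvable iff $\gcd(h,x^n-1)\mid g$, which for $h\mid m$ reads $h\mid g$. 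So $\alpha$ fails to be $m$-free iff $h\mid g$ for some nontrivial $h\mid m$, iff some irreducible $p\mid m$ divides $g$ (using $\alpha=p\circ((h/p)\circ\beta)$); negating and using $\gcd(m,x^n-1)=m$ gives $\gcd(m,\gcd(g,x^n-1))=\gcd(m,g)=1$, i.e. $\gcd(m,(x^n-1)/m_{\sigma_q,\alpha})=1$.

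I do not expect a serious obstacle here — the argument is essentially bookkeeping — but the points that need a little care are: (i) the reduction from a general divisor to a prime/irreducible divisor, as indicated above; (ii) the fact that $x^n-1$ need not be squarefree when $p\mid n$, which is harmless because the reduction never divides by $x^n-1$ and only uses $m\mid x^n-1$ to pass from $\gcd(h,x^n-1)$ to $h$; and (iii) keeping straight that $\gcd(d,q^n-1)=d$ and $\gcd(m,x^n-1)=m$ for the relevant divisors, which is what collapses the nested $\gcd$'s to the stated ones. Part (1) is classical (cf.\ \cite{HMPT}); I would include it chiefly so that the treatment of freeness is self-contained.
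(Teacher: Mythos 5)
Your proof is correct. The paper itself offers no proof of this proposition---it only cites \cite[Propositions 5.2, 5.3]{HMPT}---and your argument, resting on the cyclicity of $\fqn^*$ for part (1) and on the normal basis theorem (i.e.\ the fact that $\fqn$ is a free rank-one module over $\fq[x]/(x^n-1)$) for part (2), is exactly the standard one underlying that reference; the reduction to prime/irreducible divisors, the solvability criterion $\gcd(h,x^n-1)\mid g$ for the congruences, and the collapse of the nested $\gcd$'s are all handled correctly (and you have, rightly, read the paper's ``implies $d=1$'' in Definition~\ref{def:free} as the intended ``implies $m=1$'').
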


Intuitively, we can interpret an element $\alpha$ as $\beta$-free (in either multiplicative or additive contexts) if its minimal annihilator in a sense contains all prime factors of $\beta$. We can use Proposition~\ref{prop:free} to show the utility of these characteristic functions. 

\begin{corollary}\label{cor:free}
\begin{enumerate}
\item Let $d = q^n-1$, then $\alpha \in \fqn$ is $d$-free if and only if $\alpha$ is primitive. 
\item Let $m = x^n-1$, then $\alpha \in \fqn$ is $m$-free if and only if $\alpha$ is normal. 
\item Let $\gcd(p,n) = 1$, let $m = \frac{x^n-1}{x-\zeta}$ for some $\zeta \in \fq$ and let $\alpha \in \fqn$ be $m$-free, then $\alpha$ is either $1$-normal or $0$-normal. 
\end{enumerate}
\end{corollary}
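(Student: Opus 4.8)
The plan is to derive all three items directly from the freeness criteria in Proposition~\ref{prop:free}, combined in the third case with the minimal-polynomial description of $k$-normality in Proposition~\ref{prop:k-normaliff}. The common mechanism is the trivial observation that if $e$ divides a fixed quantity $N$ (here $N = q^n-1$ in the multiplicative setting, or $N = x^n-1$ in the additive one), then $\gcd(N, e) = e$; so a hypothesis of the shape $\gcd(N, N/u) = 1$ is equivalent to $N/u = 1$.

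For item~(1), I would apply Proposition~\ref{prop:free}(1) with $d = q^n-1$: the element $\alpha$ is $d$-free if and only if $\gcd\!\left(q^n-1,\ \tfrac{q^n-1}{\ord(\alpha)}\right) = 1$. Since $\tfrac{q^n-1}{\ord(\alpha)}$ divides $q^n-1$, this gcd equals $\tfrac{q^n-1}{\ord(\alpha)}$ itself, so the condition collapses to $\ord(\alpha) = q^n-1$, i.e.\ $\alpha$ is primitive. Item~(2) is the exact additive analogue: by Proposition~\ref{prop:free}(2) with $m = x^n-1$, $\alpha$ is $m$-free if and only if $\gcd\!\left(x^n-1,\ \tfrac{x^n-1}{m_{\sigma_q, \alpha}}\right) = 1$; since the cofactor divides $x^n-1$, this forces $m_{\sigma_q, \alpha} = x^n-1$, so $\deg m_{\sigma_q, \alpha} = n$, and Proposition~\ref{prop:k-normaliff} then says $\alpha$ is $0$-normal, that is, normal.

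For item~(3), I would first note that for $m = \tfrac{x^n-1}{x-\zeta}$ to be a divisor of $x^n-1$ (so that $m$-freeness is even defined) one needs $x-\zeta \mid x^n-1$, i.e.\ $\zeta^n = 1$. Under $\gcd(p,n)=1$ the polynomial $x^n-1$ is squarefree, so its distinct monic irreducible factors are split between the degree-one factor $x-\zeta$ and $m$; consequently the only monic divisors of $x^n-1$ that are coprime to $m$ are $1$ and $x-\zeta$. Now Proposition~\ref{prop:free}(2) says that $m$-freeness of $\alpha$ is the condition $\gcd\!\left(m,\ \tfrac{x^n-1}{m_{\sigma_q, \alpha}}\right) = 1$, and the cofactor $\tfrac{x^n-1}{m_{\sigma_q, \alpha}}$ is a monic divisor of $x^n-1$ coprime to $m$, hence equals $1$ or $x-\zeta$. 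Therefore $m_{\sigma_q, \alpha}$ is either $x^n-1$ or $m$, of degree $n$ or $n-1$ respectively, and Proposition~\ref{prop:k-normaliff} gives that $\alpha$ is $0$-normal or $1$-normal. There is no substantial obstacle here; the only point needing attention is the squarefreeness of $x^n-1$ when $\gcd(p,n)=1$, after which the argument is pure bookkeeping with divisors.
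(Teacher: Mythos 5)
Your proposal is correct and follows essentially the same route as the paper: items (1) and (2) are the immediate specializations of Proposition~\ref{prop:free}, and item (3) uses squarefreeness of $x^n-1$ under $\gcd(p,n)=1$ to force the cofactor $\tfrac{x^n-1}{m_{\sigma_q,\alpha}}$ to be $1$ or $x-\zeta$, exactly as in the paper's proof. You merely spell out the bookkeeping for (1) and (2) that the paper dismisses as obvious, which is fine.
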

\begin{proof}
The first two assertions are obvious from Proposition~\ref{prop:free}, so we prove only the final assertion. The condition $\gcd(p,n)=1$ gives that $x^n-1$ has no repeated roots. Let $m_{\sigma_q, \alpha}$ be the minimal polynomial of $\alpha$, then $\gcd(m, \frac{x^n-1}{m_{\sigma_q, \alpha}}) = 1$ implies that either $\frac{x^n-1}{m_{\sigma_q, \alpha}} = 1$ or $x-\zeta$; hence $m_{\sigma_q, \alpha} = x^n-1$ or $m_{\sigma_q, \alpha} = m$, respectively.
\end{proof}

Let $t$ be a positive divisor of $q^n-1$, let $\theta(t) = \frac{\varphi{t}}{t}$ and for $w \in \fqn^*$ let 
\[ \omega_t(w) = \theta(t) \underbrace{\sum_{d|t} \frac{\mu(d)}{\varphi(d)} \sum_{(d)} \eta_{(d)}}_{\int_{d|t} \eta_{(d)}}(w),\]
where $\mu$ is the M\"obius function, $\varphi$ is the Euler totient function, $\chi_{(d)}$ is a typical multiplicative character of order $d$ and the inner sum runs over all multiplicative characters of order $d$.

Similarly, let $T$ be a monic divisor of $x^n-1$, let $\Theta(T) = \frac{\Phi(T)}{q^{\deg(t)}}$ and for $w \in \fqn$ let
\[ \Omega_T(w) = \Theta(T) \underbrace{\sum_{D|T} \frac{\mathcal{M}(d)}{\Phi(D)} \sum_{(D)} \chi_{(D)}}_{\int_{D|T} \chi_{(D)}}(w),\]
where $\mathcal{M}$ and $\Phi$ are the polynomial analogues of $\mu$ and $\varphi$ and $\chi_{(D)}$ is an additive character for which $D \circ \chi_{(D)} = \chi_0$ and $D$ is minimal (in terms of degree) with this property. Alternatively, we may write $\chi_{(D)}=\chi_{\delta_D}$, for some $\delta_D\in \F_{q^n}$ and $\chi_{\delta_D}(w):=\chi(\delta_D\cdot w)$. In particular, $D \circ \chi_{(D)} = \chi_0$ and $D$ is minimal with this property if and only if the minimal polynomial of $\delta_D$ with respect to $\sigma_q$ is $D(x)$. For instance $\chi_{x-1}$ runs through $\chi_{c}$ for $c\in \F_{q}^*$. Also, for $D=1$, $\chi_{(D)}(x)=\chi_0(x)$ is the trivial additive character.

\begin{theorem}\textup{\cite[Section 5.2]{HMPT}}\label{thm:charfree}
\begin{enumerate}
\item Let $w \in \fqn^*$ and let $t$ be a positive divisor of $q^n-1$, then 
\[\omega_t(w) = \theta(t) \int_{d|t} \eta_{(d)}(w) = \begin{cases} 1 & \text{if $w$ is $t$-free,} \\ 0 & \text{otherwise.} \end{cases}\]
\item Let $w \in \fqn$ and let $T$ be a monic divisor of $x^n-1$, then 
\[\Omega_T(w) = \Theta(T) \int_{D|T} \chi_{(D)}(w) = \begin{cases} 1 & \text{if $w$ is $T$-free,} \\ 0 & \text{otherwise.} \end{cases}\]
\end{enumerate}
\end{theorem}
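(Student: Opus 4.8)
The plan is to recognize both identities as instances of the classical Vinogradov characteristic-function construction and to prove each by first reducing to the case where $t$ (resp.\ $T$) is a rational prime (resp.\ a monic irreducible polynomial) and then settling that case by orthogonality of characters. Two observations drive the reduction: the M\"obius functions $\mu$ and $\mathcal{M}$ vanish on non-squarefree arguments, so only the radicals of $t$ and $T$ matter; and the relevant structures ($\fqn^*$ as a cyclic group, $\fqn$ as an $\fq[x]/(x^n-1)$-module) are cyclic, so a character of squarefree ``order'' $\ell_1\cdots\ell_r$ factors uniquely as a product of characters of prime orders $\ell_1,\dots,\ell_r$. Throughout we invoke Proposition~\ref{prop:free}: $w\in\fqn^*$ is $t$-free iff $\gcd(t,(q^n-1)/\ord(w))=1$, and $w\in\fqn$ is $T$-free iff $\gcd(T,(x^n-1)/m_{\sigma_q,w})=1$.

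For part~(1), fix $w\in\fqn^*$ and set $f=(q^n-1)/\ord(w)$. The basic orthogonality relation is that, for $d\mid q^n-1$, the sum $\sum_{\psi^d=1}\psi(w)$ equals $d$ if $w$ is a $d$th power in $\fqn^*$ (equivalently $d\mid f$) and $0$ otherwise; M\"obius inversion gives a closed form for $\sum_{\ord(\psi)=d}\psi(w)$. Since $\theta(t)=\varphi(t)/t=\prod_{\ell\mid t}(1-1/\ell)$ depends only on the radical of $t$, and since the characters of squarefree order factor through the prime divisors of $t$, the defining sum of $\omega_t$ factors over primes, yielding $\omega_t(w)=\prod_{\ell\mid t}\omega_\ell(w)$. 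A one-line computation then gives $\omega_\ell(w)=(1-1/\ell)\left(1-\tfrac{1}{\ell-1}\sum_{\ord(\psi)=\ell}\psi(w)\right)$, which equals $0$ if $w$ is an $\ell$th power (i.e.\ $\ell\mid f$) and $1$ otherwise. Hence $\omega_t(w)=1$ exactly when no prime of $t$ divides $f$, i.e.\ when $\gcd(t,f)=1$; by Proposition~\ref{prop:free} this is precisely the condition that $w$ be $t$-free, and otherwise some factor vanishes so $\omega_t(w)=0$.

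For part~(2) the argument is entirely parallel under the dictionary: rational integers $\leftrightarrow$ monic polynomials in $\fq[x]$, rational primes $\leftrightarrow$ monic irreducibles, $\fqn^*$ $\leftrightarrow$ the $\fq[x]$-module $\fqn$ (which is cyclic, hence isomorphic to $\fq[x]/(x^n-1)$, precisely because a normal element exists), multiplicative order $\leftrightarrow$ the $\sigma_q$-minimal polynomial $m_{\sigma_q,w}$, and multiplicative characters $\leftrightarrow$ the additive characters $\chi_\delta$. Here the needed orthogonality is that for monic $D\mid x^n-1$ one has $\{\chi_{(D')}:D'\mid D\}=\{\chi_\delta:\delta\in\fqn,\ D\circ\delta=0\}$, a set of cardinality $q^{\deg D}$ because the linearized $q$-associate of $D$ is separable of $q$-degree $\deg D$ (note $x\nmid D$ since $D\mid x^n-1$); consequently $\sum_{D'\mid D}\sum_{(D')}\chi_{(D')}(w)$ equals $q^{\deg D}$ or $0$ according as $w$ does or does not lie in the trace-form orthogonal complement of $\{\delta:D\circ\delta=0\}$, an $\fq$-subspace one describes via the adjoint operation $D\mapsto D^*$ on $\fq[x]/(x^n-1)$ associated to the pairing $(u,v)\mapsto\Tr_{q^n/q}(uv)$. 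Since $\Theta(T)=\Phi(T)/q^{\deg T}$ again depends only on the radical of $T$ and squarefree-order characters factor through the irreducible factors of $T$, one obtains $\Omega_T(w)=\prod_{P\mid T}\Omega_P(w)$ with each $\Omega_P(w)\in\{0,1\}$, and matching with Proposition~\ref{prop:free} — using that $D\mapsto D^*$ permutes the monic irreducible divisors of $x^n-1$ — identifies $\Omega_T(w)=1$ with $T$-freeness of $w$.

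The main obstacle is the bookkeeping in part~(2): one must make precise the orthogonality relation for the additive characters $\chi_\delta$, which entails setting up the trace-form adjoint $D\mapsto D^*$ and verifying that it permutes the monic irreducible divisors of $x^n-1$, so that the condition produced by the character sum is literally $\gcd(T,(x^n-1)/m_{\sigma_q,w})=1$. A secondary point is the case $p\mid n$, where $x^n-1$ has repeated irreducible factors (and, in part~(1), the analogous case where $q^n-1$ is not squarefree): this is ultimately harmless, since $\mu$ and $\mathcal{M}$ are supported on squarefree arguments so only the radicals enter and the factorization over primes/irreducibles used above is unaffected, but it is the step that most visibly needs to be checked. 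Everything else is routine manipulation of orthogonality identities, and once the prime/irreducible case is established the conclusion in both parts is immediate.
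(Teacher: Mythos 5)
Your argument is correct: part (1) is the classical Vinogradov computation (factor over the primes dividing the radical of $t$, use orthogonality at each prime, and match $\gcd(t,(q^n-1)/\ord(w))=1$ with $t$-freeness via Proposition~\ref{prop:free}), and part (2) is its exact module-theoretic analogue, with the only delicate point — that the $D$-torsion of the additive character group has $q^{\deg D}$ elements whose common kernel is $\ker\bigl(((x^n-1)/D)\circ\,\cdot\,\bigr)$, the reciprocal/adjoint map being a degree-preserving involution on the monic divisors of $x^n-1$ — correctly identified and handled. The paper itself offers no proof of this theorem (it is imported from \cite[Section 5.2]{HMPT}), and your orthogonality-plus-multiplicativity argument is precisely the standard one given there.
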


From Theorem~\ref{thm:charfree}, if $t=q^n-1$ and $T = x^n-1$, then $\omega_t\Omega_T$ is the characteristic function for elements $w \in \fqn$ that are simultaneously primitive and normal over $\fq$. If instead $\gcd(p,n) = 1$ and $T = \frac{x^n-1}{x-1}$, then by Corollary~\ref{cor:free}, $\omega_t\Omega_T$ is a characteristic function for elements that are either primitive and normal, or primitive with minimal polynomial $m_{\sigma_q, w}(x) = \frac{x^n-1}{x-1}$; in the second case, these are a subset of the primitive $1$-normal elements. From Corollary~\ref{cor:extreme-k}, however, we know that if $\frac{x^n-1}{x-1} \circ w = \Tr_{q^n/q}(w) = 0$, then $w$ is not $0$-normal; hence, to prove existence of primitive $1$-normals it is enough to prove the existence of $q^n-1$-free, $\frac{x^n-1}{x-1}$-free, trace-$0$ elements. To apply character sum arguments, we require a further character sum for elements with prescribed trace. 

\subsubsection{Elements with prescribed trace}
In~\cite{HMPT}, the authors require only a characteristic function for trace-$0$ elements of $\fqn$ over $\fq$, but in this work we will need a more general characteristic function for elements whose subfield trace function takes any prescribed value. 

It is well known that $\fqn$ contains a subfield of order $m$ if and only if $m$ divides $n$. Let 
\[ T_{m,\beta}(w) = \begin{cases} 1 & \text{if $\Tr_{q^n/q^m}(w) = \beta$,} \\
                                       0 & \text{otherwise.}
				             \end{cases}
\]
We require a character sum for $T_{m,\beta}$. 

Let $\lambda$ be the canonical additive character of $\fq$ with lifts $\lambda_m$ and $\chi$ to $\F_{q^m}$ and $\fqn$, respectively; that is $\lambda_m(w) = \lambda(\Tr_{q^m/q}(w))$ and $\chi(w) = \lambda(\Tr_{q^n/q}(w)) = \lambda(\Tr_{q^m/q}(\Tr_{q^n/q^m}(w))) = \lambda_m(\Tr_{q^n/q^m}(w))$, by transitivity of trace. 

Observe that $T_{m,\beta}$ can be written
\[ T_{m,\beta}(w) = \frac{1}{q^m} \sum_{d \in \F_{q^m}} \lambda_m(d (\Tr_{q^n/q^m}(w) - \beta)) = \frac{1}{q^m} \sum_{d \in \F_{q^m}} \lambda_m(d \Tr_{q^n/q^m}(w-\alpha)),\]
where $\alpha \in \fqn$ is any element such that $\Tr_{q^n/q^m}(\alpha) = \beta$, 
since $\Tr_{q^n/q^m}(w) = \beta$ implies $\lambda_m(0) = 1$ is counted $q^m$ times, otherwise $\Tr_{q^n/q^m}(w-\alpha) = \beta'$ for some $\beta' \in \F_{q^m}^*$ and $\sum_{d \in \F_{q^m}} \lambda_m(d\beta') = 0$. 
Hence $$T_{m,\beta}(w) = \frac{1}{q^m} \sum_{d \in \F_{q^m}}\chi_d(w-\alpha)=\frac{1}{q^m} \sum_{d \in \F_{q^m}}\chi_d(w)\chi_d(\alpha)^{-1}.$$

\subsubsection{Gauss sums}
For any multiplicative character $\eta$ and additive character $\chi$, denote by $G(\eta, \chi)$ the \emph{Gauss sum} 
\[ G(\eta, \chi) = \sum_{w \in \fqn^*} \eta(w) \chi(w).\]

Certainly, if $\chi = \chi_1$ and $\eta = \eta_0$, then $G(\eta, \chi) = q-1$. However, it is well known that $G(\eta, \chi)$ is usually much smaller.

\begin{theorem}\textup{\cite[Theorem 5.11]{LN}}
Let $G(\eta, \chi) = \sum_{w \in \fqn^*} \eta(w) \chi(w)$. Then,
\[ G(\eta, \chi) 
   = \begin{cases} q^n-1 & \text{for $\eta = \eta_0$ and $\chi = \chi_0$,}\\
	                  -1 & \text{for $\eta = \eta_0$ and $\chi \neq \chi_0$,}\\
										 0 & \text{for $\eta \neq \eta_0$ and $\chi = \chi_0$.}
		 \end{cases}\]
If both $\eta \neq \eta_0$ and $\chi \neq \chi_0$, then $|G(\eta, \chi)| = q^{n/2}$. 
\end{theorem}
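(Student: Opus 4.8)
\emph{Proof proposal.} The plan is to dispatch the three degenerate cases by direct appeal to character orthogonality, and then to obtain the bound $|G(\eta,\chi)|=q^{n/2}$ in the generic case by evaluating $|G(\eta,\chi)|^2$. First, if $\eta=\eta_0$ and $\chi=\chi_0$, every summand equals $1$ and the sum is $|\fqn^*|=q^n-1$. If $\eta=\eta_0$ and $\chi\neq\chi_0$, then $G(\eta_0,\chi)=\sum_{w\in\fqn}\chi(w)-\chi(0)=0-1=-1$, since a nontrivial additive character sums to zero over the additive group $(\fqn,+)$. If $\eta\neq\eta_0$ and $\chi=\chi_0$, then $G(\eta,\chi_0)=\sum_{w\in\fqn^*}\eta(w)=0$ by orthogonality of the nontrivial multiplicative character $\eta$ on $\fqn^*$.

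For the main assertion, suppose $\eta\neq\eta_0$ and $\chi\neq\chi_0$. Using $\overline{\eta(w)}=\eta(w^{-1})$ and $\overline{\chi(w)}=\chi(-w)$, I would write
\[ |G(\eta,\chi)|^2 = \sum_{w_1,w_2\in\fqn^*} \eta\bigl(w_1 w_2^{-1}\bigr)\,\chi(w_1-w_2) \]
and reindex the double sum by the substitution $w_1=w_2 v$ (for each fixed $w_2\in\fqn^*$, the element $v$ runs over $\fqn^*$), obtaining
\[ |G(\eta,\chi)|^2 = \sum_{v\in\fqn^*}\eta(v)\sum_{w_2\in\fqn^*}\chi\bigl(w_2(v-1)\bigr). \]
The inner sum equals $q^n-1$ when $v=1$, and equals $\sum_{u\in\fqn^*}\chi(u)=-1$ when $v\neq1$, since then $w_2\mapsto w_2(v-1)$ permutes $\fqn^*$. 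Hence, splitting off the term $v=1$ and using $\eta(1)=1$ together with $\sum_{v\in\fqn^*}\eta(v)=0$,
\[ |G(\eta,\chi)|^2 = (q^n-1) - \sum_{v\in\fqn^*\setminus\{1\}}\eta(v) = (q^n-1) - \bigl(0-\eta(1)\bigr) = q^n, \]
so $|G(\eta,\chi)|=q^{n/2}$.

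The argument is entirely formal manipulation of finite sums, so I do not anticipate any genuine obstacle; the only points requiring care are checking that $w_1=w_2 v$ is a bijective reindexing of the double sum and that the term $v=1$ is separated before invoking additive orthogonality. Since this is the classical Gauss sum evaluation, and is precisely \cite[Theorem 5.11]{LN}, one could simply cite it; but the argument above is self-contained and uses nothing beyond the field axioms, so the same statement (in particular the bound $|G(\eta,\chi)|=q^{1/2}$ for the base field) holds over $\fq$ as well, which is the form used when these Gauss sums are later attached to the lifted characters $\mu_1$ and $\chi$ introduced above.
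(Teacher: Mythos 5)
Your proof is correct: the three degenerate cases follow from orthogonality exactly as you write, and the computation of $|G(\eta,\chi)|^2$ via the bijective substitution $w_1 = w_2 v$ is the standard (and valid) evaluation giving $q^n$. The paper itself offers no proof, simply citing \cite[Theorem 5.11]{LN}; your argument is the classical textbook proof of that cited result, so there is nothing to reconcile.
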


Throughout this paper we will always take Gauss sums $\sum_{w \in \fqn} \eta(w) \chi(w)$ including the element $0$ and, for this reason, we extend the multiplicative characters to $0$: we set $\eta_0(0)=1$ and $\eta(0)=0$ for any nontrivial multiplicative character $\eta$. Therefore, our Gauss sums take the following values:
\[ G(\eta, \chi):=  \sum_{w \in \fqn} \eta(w) \chi(w)
   = \begin{cases} q^n& \text{for $\eta = \eta_0$ and $\chi = \chi_0$,}\\
	                  0 & \text{for $\eta = \eta_0$ and $\chi \neq \chi_0$,}\\
										 0 & \text{for $\eta \neq \eta_0$ and $\chi = \chi_0$.}
		 \end{cases}\]
As before, if both $\eta \neq \eta_0$ and $\chi \neq \chi_0$, then $|G(\eta, \chi)| = q^{n/2}$.

\section{Existence of primitive, free elements with prescribed trace}\label{sec:charactersum}
In this section, we generalize the character sum argument for proving the existence of primitive $1$-normal elements from~\cite{HMPT} by providing a character sum expression for primitive $1$-normal elements with prescribed trace in a subfield. We use the expressions from Section~\ref{sec:charsumbground}, along with the estimations found in Appendix~\ref{sec:estimations} to prove our main results.

We highlight the following notation, which will become ubiquitous throughout the rest of this document. 

\begin{define}
Let 
\begin{equation}\label{def:W(t)}
W(t) = \begin{cases} \text{the number of squarefree divisors of $t$} & \text{if $t\in \mathbb Z$,} \\
                     \text{the number of monic divisors of $t$} & \text{if $t \in \fq[x]$ is monic.}
	     \end{cases}
\end{equation}
Moreover, let $w(t)$ be the number of distinct prime divisors (or monic irreducible divisors) of $t$ so that $W(t) = 2^{w(t)}$. 
Finally, for $t \in \mathbb{Z}$ let $d(t)$ be the number of divisors of $t$ so that $d(t) \geq W(t)$.
\end{define}

\begin{proposition}\label{propmain}
Let $q$ be a power of a prime $p$ and let $\F_{q^n}\supsetneq \F_{q^m}\supseteq \F_{q}$, where either $m=1$ or $m$ is a power of $p$. Also, let $f(x)$ be a polynomial not divisible by $x-1$ such that $f(x)$ divides $x^n-1$. Let $N$ be the number of elements $w\in \F_{q^n}$ such that $w$ is primitive, $f$-free over $\F_q$ and $\Tr_{q^n/q^m}(w)=\beta$, where $\beta\in \F_{q^m}$. If $\alpha$ is any element of $\F_{q^n}$ such that $\Tr_{q^n/q^m}(\alpha)=\beta$ and $a_c=\chi_c(\alpha)^{-1}$, then the following holds

\begin{align}
\frac{N}{\theta(q^n-1)\Theta(f)}=\frac{1}{q^m} &\left(q^n + \sum_{c\in \F_{q^m}}\right. a_c\int\limits_{d|q^n-1\atop{d\ne 1}} \int\limits_{D|f\atop {D\ne 1}}G(\eta_d, \chi_{\delta_D+c}) \nonumber \\ 
&+ \left.\sum_{c\in \F^*_{q^m}}a_c\int\limits_{d|q^n-1\atop{d\ne 1}} G(\eta_d, \chi_{c})\right).\label{eqn:id}
\end{align}

In particular, we have the following inequality:

\begin{equation}\label{eqn:main1}\frac{N}{\theta(q^n-1)\Theta(f)}> q^{n-m}-q^{n/2}W(q^n-1)W(f).\end{equation}
\end{proposition}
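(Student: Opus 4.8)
The plan is to expand the indicator functions for the three defining properties (primitive, $f$-free, prescribed trace) and interchange the order of summation so that the resulting expression becomes a sum of Gauss sums. First I would write $N = \sum_{w \in \F_{q^n}} \omega_{q^n-1}(w)\, \Omega_f(w)\, T_{m,\beta}(w)$, using Theorem~\ref{thm:charfree} for the first two factors and the formula $T_{m,\beta}(w) = \frac{1}{q^m}\sum_{c \in \F_{q^m}} \chi_c(w) a_c$ (with $a_c = \chi_c(\alpha)^{-1}$) derived at the end of Section~\ref{sec:charsumbground}. Substituting the integral expressions $\omega_{q^n-1}(w) = \theta(q^n-1)\int_{d\mid q^n-1}\eta_{(d)}(w)$ and $\Omega_f(w) = \Theta(f)\int_{D\mid f}\chi_{(D)}(w)$, and using $\chi_{(D)}(w)\chi_c(w) = \chi_{\delta_D + c}(w)$ together with $\chi_c = \chi_{\delta_1 + c}$ when $D = 1$, I would collect everything into
\[
\frac{N}{\theta(q^n-1)\Theta(f)} = \frac{1}{q^m}\sum_{c \in \F_{q^m}} a_c \int\limits_{d\mid q^n-1} \int\limits_{D \mid f} \sum_{w \in \F_{q^n}} \eta_{(d)}(w)\,\chi_{\delta_D + c}(w) = \frac{1}{q^m}\sum_{c \in \F_{q^m}} a_c \int\limits_{d\mid q^n-1} \int\limits_{D \mid f} G(\eta_{(d)}, \chi_{\delta_D + c}).
\]
Then I would split off the ``main term'' corresponding to $d = 1$ and $D = 1$: here $\eta_{(d)} = \eta_0$, $\delta_D = 0$, and $G(\eta_0, \chi_c) = q^n$ if $c = 0$ and $0$ otherwise, so this contributes $\frac{1}{q^m}(a_0 q^n) = \frac{q^n}{q^m}$ since $a_0 = \chi_0(\alpha)^{-1} = 1$. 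The remaining terms naturally break into the piece with $d \ne 1$, $D \ne 1$ (the double integral in~\eqref{eqn:id}), and the piece with $d \ne 1$, $D = 1$; in the latter $\delta_D = 0$ so the Gauss sum is $G(\eta_d, \chi_c)$, and for $c = 0$ this is $G(\eta_d, \chi_0) = 0$, which is why the last sum in~\eqref{eqn:id} ranges only over $c \in \F_{q^m}^*$. One must also check that the piece with $d = 1$, $D \ne 1$ vanishes: there $\eta_{(d)} = \eta_0$ and $G(\eta_0, \chi_{\delta_D + c}) = 0$ unless $\delta_D + c = 0$, i.e. $c = -\delta_D$; but $\delta_D$ has minimal polynomial $D \mid f$ with $D \ne 1$ and $x - 1 \nmid f$, so $\delta_D \notin \F_q \supseteq \F_{q^m}$ forces... wait, actually I should be careful: $\delta_D \in \F_{q^m}$ would require its minimal polynomial under $\sigma_q$ to divide $x^m - 1$, and since $x-1\nmid f$ we have $\gcd(D, x^m-1)$... in the case $m=1$ this is immediate ($D\ne 1$ and $D\nmid x-1$ means $\delta_D\notin\F_q$), and when $m$ is a power of $p$ one uses $x^m-1 = (x-1)^m$ to see $\gcd(D, x^m-1) = 1$, hence $\delta_D \notin \F_{q^m}$ and so $c = -\delta_D \notin \F_{q^m}$, a contradiction. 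This establishes~\eqref{eqn:id}.

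For the inequality~\eqref{eqn:main1}, I would isolate the main term $q^{n-m}$ and bound the absolute value of everything else. Every Gauss sum appearing (with a nontrivial multiplicative character, since $d \ne 1$) has modulus at most $q^{n/2}$, and $|a_c| = 1$. The total number of terms is controlled by $W(q^n-1)$ (the number of squarefree $d \mid q^n-1$, matching the Vinogradov integral) times $W(f)$ (the number of monic $D \mid f$); more precisely the double integral over $d \ne 1$, $D \ne 1$ has at most $W(q^n-1)W(f)$ terms after accounting for the weights $\frac{\mu(d)}{\varphi(d)}$ and $\frac{\mathcal M(D)}{\Phi(D)}$ being bounded by $1$ in absolute value when summed over characters of a given order, and the single-integral piece contributes strictly fewer. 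Summing over $c \in \F_{q^m}$ gives the factor $q^m$ which cancels the $\frac{1}{q^m}$, yielding $|N/(\theta(q^n-1)\Theta(f)) - q^{n-m}| \le q^{n/2}W(q^n-1)W(f)$, hence~\eqref{eqn:main1}.

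The main obstacle is bookkeeping rather than any single hard estimate: one has to track exactly which $(d, D, c)$ triples survive, correctly count terms inside the Vinogradov integrals so the character-count and the $1/\varphi(d)$, $1/\Phi(D)$ weights combine to give the clean bound $W(q^n-1)W(f)$, and verify the vanishing of the cross terms ($d=1$, $D\ne1$) using the hypothesis $x-1\nmid f$ and the structural assumption that $m$ is $1$ or a power of $p$ — it is precisely this assumption that guarantees $\delta_D \notin \F_{q^m}$ for $D \ne 1$ dividing $f$, since then $x^m - 1$ is a power of $x - 1$ and shares no nontrivial factor with $f$. I would also double-check the edge effect that the single integral $\int_{d \mid q^n-1, d \ne 1} G(\eta_d, \chi_c)$ must have $c \ne 0$; this accounts for the asymmetry between the two residual sums in~\eqref{eqn:id}.
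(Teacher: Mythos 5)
Your proposal is correct and follows essentially the same route as the paper: combine the three characteristic functions, rewrite everything as a sum of Gauss sums $G(\eta_d,\chi_{\delta_D+c})$, use the hypotheses that $x-1\nmid f$ and that $m$ is $1$ or a power of $p$ (so $x^m-1=(x-1)^m$) to conclude $\delta_D+c\neq 0$ unless $(D,c)=(1,0)$, isolate the main term $q^{n-m}$, and bound the remaining terms by $q^{n/2}$ times the weighted term counts $W(q^n-1)W(f)$. The bookkeeping you describe (including the asymmetry forcing $c\in\F_{q^m}^*$ in the single-integral piece) matches the paper's computation exactly.
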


\begin{proof}
Combining the characteristic functions for primitivity, $f$-freeness and prescribed trace we obtain the following:

$$N=\frac{\theta(q^n-1)\Theta(f)}{q^m}\sum_{c\in \F_{q^m}}\sum_{w\in \F_{q^n}}\int\limits_{d|q^n-1}\displaystyle\int\limits_{D|f}\chi_c(\alpha)^{-1}\chi_{c}(w)\chi_{\delta_D}(w)\eta_d(w),$$
hence 
$$\frac{N}{\theta(q^n-1)\Theta(f)}=\frac{1}{q^m}\sum_{c\in \F_{q^m}}a_c\int\limits_{d|q^n-1}\displaystyle\int\limits_{D|f}G(\eta_d, \chi_{\delta_D+c}).$$

Now observe that, for each $D$ dividing $f(x)$, $D$ is not divisible by $x-1$. Since $m=1$ or $m$ is a power of $p$, $\delta_D\in \F_{q^m}$ if and only if $D$ divides $x^m-1=(x-1)^m$.  In particular, $\delta_D\not\in  \F_{q^m}$ unless $D=1$ and thus $\delta_D=0$. Therefore $\delta_D+c\ne 0$ unless $D=1$ and $c=0$. Our Gauss sums then take the following values:

\[
  G(\eta_d, \chi_{\delta_D+c})=\begin{cases}
      q^n & \text{if $(D,d,c) = (1,1,0)$} \\ 
			  0 & \text{if $d=1$ and $(D,c) \neq (1,0)$ or if $d\ne 1$ and $(D,c) \neq (1,0)$.}
    \end{cases}
\] 
Otherwise, $|G(\eta_d, \chi_{\delta_D+c})|=q^{n/2}$.

Using the identities above we obtain Equation~\eqref{eqn:id}. We know that, in the remaining Gauss sums in Equation~\eqref{eqn:id}, we have $|G(\eta, \chi)|=q^{n/2}$ and clearly $|a_c|=1$. Applying these estimates to Equation~\eqref{eqn:id} we obtain
\begin{align*}
\frac{N}{\theta(q^n-1)\Theta(f)}&\ge q^{n-m}-\frac{q^{n/2}}{q^m}\left(q^m(W(q^n-1)-1)(W(f)-1)\right.\\ & \quad +\left.(q^m-1)(W(q^n-1)-1)\right)\\
&> q^{n-m}-q^{n/2}W(q^n-1)W(f).
\end{align*}\qedhere
\end{proof}

In this paper, we focus on the case of primitive $1$-normal elements. For the particular case of $1$-normal elements, if $n\ge 2$, the number of $1$-normal elements of $\F_{q^n}$ over $\F_q$ is at least equal to $\Phi_q(T)$, where $T=\frac{x^n-1}{x-1}$.

\section{A completion of Theorem \ref{thm:mainpanario} for the case $\gcd(n, p)=1$}\label{sec:coprime}
From~\cite[Corollary 5.8]{HMPT}, we can easily deduce the following:

\begin{lemma}\label{sievepanario}
Suppose that $q$ is a power of a prime $p$, $n\ge 2$ is a positive integer not divisible by $p$ and $T(x)=\frac{x^n-1}{x-1}$. If the pair $(q, n)$ satisfies
\begin{equation}\label{sieve}W(T)\cdot W(q^n-1)< q^{n/2-1}, \end{equation}
then there exist $1$-normal elements of $\F_{q^n}$ over $\F_q$.
\end{lemma}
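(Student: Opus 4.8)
The plan is to deduce this directly from Proposition~\ref{propmain}, in fact establishing the stronger statement that there exists a \emph{primitive} $1$-normal element of $\F_{q^n}$ over $\F_q$ whenever~\eqref{sieve} holds.

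First I would recall, as in the discussion following Theorem~\ref{thm:charfree}, that to produce a primitive $1$-normal element it suffices to exhibit $w\in\F_{q^n}$ that is simultaneously $(q^n-1)$-free (equivalently primitive, by Corollary~\ref{cor:free}(1)), $T$-free for $T=\frac{x^n-1}{x-1}$, and of trace zero. Indeed, Corollary~\ref{cor:free}(3) then forces $m_{\sigma_q,w}\in\{x^n-1,\,T\}$, while $\Tr_{q^n/q}(w)=0$ together with Corollary~\ref{cor:extreme-k}(2) rules out $m_{\sigma_q,w}=x^n-1$; hence $m_{\sigma_q,w}=T$ and $w$ is $1$-normal. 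The hypothesis $\gcd(n,p)=1$ is used twice here: it guarantees that $x^n-1$ is squarefree (needed for Corollary~\ref{cor:free}(3)), and that $T(1)=n\neq 0$ in $\F_q$, so $x-1\nmid T$; together with $T\mid x^n-1$, the polynomial $f=T$ is an admissible choice in Proposition~\ref{propmain}.

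Next I would apply Proposition~\ref{propmain} with $m=1$ (permitted, since the hypothesis on $m$ allows $m=1$), $f=T$, and $\beta=0$. Writing $N$ for the number of $w\in\F_{q^n}$ that are primitive, $T$-free, and satisfy $\Tr_{q^n/q}(w)=0$, inequality~\eqref{eqn:main1} gives
\[
\frac{N}{\theta(q^n-1)\,\Theta(T)} \;>\; q^{\,n-1} - q^{n/2}\,W(q^n-1)\,W(T).
\]
The right-hand side is positive precisely when $q^{\,n/2-1} > W(q^n-1)\,W(T)$, which is exactly condition~\eqref{sieve}; and since both $\theta(q^n-1)$ and $\Theta(T)$ are positive (they are ratios of a positive Euler-type count to a positive modulus), we conclude $N>0$. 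By the previous paragraph any such $w$ is a primitive $1$-normal element, which in particular proves the existence of $1$-normal elements of $\F_{q^n}$ over $\F_q$.

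I do not anticipate a genuine obstacle: the lemma is a straightforward specialization of Proposition~\ref{propmain}, and the apparent $q^{-1}$ gap between the exponent $n/2-1$ in~\eqref{sieve} and the exponent $n/2$ appearing in~\eqref{eqn:main1} is absorbed automatically by the leading term $q^{\,n-m}=q^{\,n-1}$. The only points requiring attention are verifying the hypotheses of Proposition~\ref{propmain} for the choice $f=T$ (where $\gcd(n,p)=1$ is what makes $x-1\nmid T$), and the logical step that $T$-freeness together with vanishing trace pins the minimal $\sigma_q$-polynomial down to be exactly $T$, hence forces $1$-normality rather than $0$-normality.
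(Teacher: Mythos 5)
Your proof is correct and matches the paper's intent: the paper merely cites \cite[Corollary 5.8]{HMPT} for this lemma, but the argument is exactly the specialization of Proposition~\ref{propmain} to $m=1$, $f=T$, $\beta=0$ that you give, and it is the same derivation the paper itself spells out for the analogous Lemma~\ref{sievep} in the case $p\mid n$. Note that you in fact establish the stronger --- and evidently intended --- conclusion that a \emph{primitive} $1$-normal element exists under~\eqref{sieve}, which is how the lemma is actually used later in the text.
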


Inequality \eqref{sieve} is an essential step in the proof of Theorem \ref{thm:mainpanario} and it was first studied in \cite{cohen2}. Under the condition that $n\ge 6$ for $q\ge 11$ and $n\ge 3$ for $3\le q\le 9$, this inequality is not true only for a finite number of pairs $(q, n)$. Namely, we have the following.

\begin{theorem}\textup{\cite[Theorem 4.5]{cohen2}}\label{cohenpairs}
Let $q$ and $n$ be coprime, and assume that $n\ge 6$ if $q\ge 11$, and that $n\ge 3$ if $3\le q\le 6$. The set of $34$ pairs $(q, n)$ that do not satisfy Inequality~\eqref{sieve} is 
\begin{gather*}(4, 15), (13, 12), (7, 12), (11, 10), (4, 9), (9, 8), (5, 8), (3, 8), (8, 7), (121, 6), (61, 6),
\\
(49, 6), (43, 6), (37, 6), (31, 6), (29, 6), (25, 6), (19, 6), (13, 6), (11, 6), (7, 6), (5, 6),
\\ 
(9, 5) (4, 5), (3, 5), (9, 4), (7, 4), (5, 4), (3, 4), (8, 3), (7, 3), (5, 3), (4, 3).\end{gather*}
\end{theorem}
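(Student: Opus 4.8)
The plan is to combine a general multiplicative-function estimate that settles all but finitely many pairs $(q,n)$ with a direct verification on the bounded remainder, which is the standard structure for inequalities of the shape~\eqref{sieve}.

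First I would record the elementary bound that, for every real $a>1$ and every positive integer $N$,
\[ W(N) = 2^{w(N)} \le C_a\, N^{1/a}, \qquad C_a := \prod_{\substack{\ell \text{ prime}\\ \ell < 2^{a}}} \frac{2}{\ell^{1/a}}, \]
which holds because $2/\ell^{1/a} \le 1$ as soon as $\ell \ge 2^{a}$, so only finitely many primes inflate the product $\prod_{\ell \mid N} 2\ell^{-1/a}$. Applied with a convenient value of $a$ (say $a=4$, or a slightly larger one) this gives $W(q^n-1) < C_a\, q^{n/a}$. For the polynomial factor I would argue analogously inside $\F_q[x]$: since $\gcd(n,q)=1$, $x^n-1$ is separable, $x-1$ divides it exactly once, and $T=\frac{x^n-1}{x-1}$ has degree $n-1$. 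Using that there are at most $q^d/d$ monic irreducibles of degree $d$ over $\F_q$, a squarefree polynomial of degree $n-1$ has at most $\big(\text{number of monic irreducibles of degree} < D\big)$ irreducible factors of degree below $D$ and at most $(n-1)/D$ of degree at least $D$; optimizing $D$ of size about $\log_q n$ yields $W(T) \le B_q\, q^{\varepsilon_q n}$ with $\varepsilon_q \to 0$. Alternatively one keeps everything fully explicit via $w(T) \le \sum_{d\mid n}\varphi(d)/\ord_d(q) - 1$.

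Feeding these two estimates into~\eqref{sieve} reduces it to $C_a B_q\, q^{(1/a+\varepsilon_q)n} < q^{n/2-1}$, i.e.\ $q^{(1/2 - 1/a - \varepsilon_q)n} > q\, C_a B_q$, which holds once $n$ exceeds an explicit bound $n_0(q)$ and, since the left side also grows with $q$, holds for all $q$ once $q$ exceeds an explicit bound $q_0$. Together with the hypotheses of the theorem ($n\ge 6$ for $q\ge 11$, $n\ge 3$ for the small admissible $q$, and $\gcd(n,q)=1$), this confines the possible failures of~\eqref{sieve} to a finite, explicitly enumerable set of pairs. For each such pair I would then factor $q^n-1$, determine the factorization type of $x^n-1$ over $\F_q$ (equivalently compute $\ord_d(q)$ for $d\mid n$), evaluate $W(q^n-1)$ and $W(T)$ exactly, test~\eqref{sieve}, and collect the pairs where it fails — these turn out to be precisely the $34$ pairs listed.

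The main obstacle is quantitative rather than conceptual: the constants $C_a$ and $B_q$ swell as the saved exponent $1/a+\varepsilon_q$ shrinks, so one must balance these carefully — especially for the smallest admissible $q$, where $q^{n/2-1}$ grows slowly and the exceptional range of $n$ stretches out — in order to keep the finite verification genuinely small and the required integer factorizations (for instance $q^n-1$ with $q$ as large as $121$ and $n=6$) feasible. A secondary technical point is handling the polynomial side cleanly for the composite and prime-power values of $q$ that occur, where $\ord_d(q)$ behaves less uniformly than in the prime case.
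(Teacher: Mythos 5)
This statement is quoted verbatim from Cohen--Hachenberger (reference \cite{cohen2}, Theorem 4.5) and the paper gives no proof of it, so there is nothing internal to compare against; your sketch follows exactly the standard route of that source (an explicit bound $W(N)\le C_a N^{1/a}$, the cyclotomic-coset count $w(T)=\sum_{d\mid n,\,d>1}\varphi(d)/\ord_d(q)$ for the polynomial side, reduction to a finite range, and direct verification of the survivors). The outline is correct, and the only real work left is the bookkeeping of constants and the finite computation that produces the list of $34$ pairs.
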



\subsection{The case $q=2$}
\begin{lemma}\label{aux3}
Suppose that $n\ne 15$ is odd, $q=2$ and $T(x)=\frac{x^n-1}{x-1}\in \F_2[x]$. For $n>9$, Inequality \eqref{sieve} holds.
\end{lemma}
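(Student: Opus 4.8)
The plan is to verify Inequality~\eqref{sieve}, namely $W(T)\cdot W(2^n-1) < 2^{n/2-1}$, for all odd $n > 9$ with $n \neq 15$, by bounding each factor on the left in terms of $n$. Since $W(t) = 2^{w(t)}$ counts squarefree divisors of an integer $t$ (and monic divisors of a polynomial $T$), the inequality is equivalent to $w(T) + w(2^n - 1) < n/2 - 1$, so everything reduces to controlling the number of distinct irreducible factors on each side.

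First I would bound $W(T)$, where $T(x) = (x^n-1)/(x-1) \in \F_2[x]$. Since $n$ is odd, $x^n - 1$ is squarefree over $\F_2$, and its irreducible factors correspond to the cyclotomic cosets of $2$ modulo $d$ for each divisor $d \mid n$; in particular $w(x^n-1) = \sum_{d \mid n} \phi(d)/\ord_d(2) \leq \sum_{d\mid n}\phi(d)/1$ would be far too weak, so instead I would use that each irreducible factor of $x^n-1$ other than $x-1$ has degree at least the multiplicative order of $2$ modulo the relevant $d \geq 3$, which is at least $2$. A clean uniform bound: $W(T) = 2^{w(T)} \leq 2^{(n-1)/2 + o(n)}$ is still too crude; the right move is the standard estimate (as used in~\cite{cohen2}) that for $T$ a divisor of $x^n-1$ of degree $n-1$, $W(T)$ is at most $2^{n/2}$ only in trivial ranges, so one uses instead the sharper divisor-count bound. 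Concretely, I would invoke that the number of monic divisors of $x^n - 1$ over $\F_2$ equals $\prod_{d\mid n} 2^{\phi(d)/\ord_d(2)}$, and bound this product; since every $\ord_d(2) \geq 2$ for $d \geq 3$ and $\sum_{d\mid n}\phi(d) = n$, we get $w(x^n-1) \leq 1 + (n-1)/2$, hence $W(T) \leq 2^{(n-1)/2}$. This alone is not enough, so one must combine it with a genuinely better bound when $n$ has few divisors, which is the typical situation.

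Next I would bound $W(2^n - 1) = 2^{\omega(2^n-1)}$ using the elementary fact that $\omega(m) = O(\log m / \log\log m)$; more usefully in this explicit setting, every prime divisor $\ell$ of $2^n - 1$ satisfies $\ell \equiv 1 \pmod{\ord_\ell(2)}$ with $\ord_\ell(2) \mid n$, and distinct "primitive" prime divisors attached to distinct divisors $d\mid n$ contribute factors at least $2^d - 1$ (or their primitive part), so $\prod (\text{primes}) \leq 2^n - 1 < 2^n$. Combined with a lower bound like $\ell \geq 3$ for each prime factor, this gives $\omega(2^n-1) \leq n/\log_2 3 < 0.64\, n$, again too weak by itself; the sharper route is to split primes of $2^n-1$ by size, noting there are at most $\log_2(2^n-1) < n$ of them counted with multiplicity but the number of distinct ones below any bound $B$ is at most $B$, while those above $B$ number at most $n/\log_2 B$. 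Optimizing $B$ (e.g. $B \approx n$) yields $\omega(2^n-1) = O(n/\log n)$, which is $o(n/2)$ and hence dominated. So the overall strategy is: prove $w(T) + w(2^n-1) \leq (n-1)/2 + C n/\log n$ for an explicit constant $C$, check that the right side is below $n/2 - 1$ for $n$ large, and then handle the finitely many remaining small odd $n$ (those between $10$ and the threshold, excluding $15$) by direct computation of $W(T)$ and $W(2^n-1)$.

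The main obstacle is that the crude bound $W(T) \leq 2^{(n-1)/2}$ is essentially tight in the worst case (when $2$ is a primitive root modulo $n$ with $n$ prime, $x^n-1 = (x-1)\cdot(\text{irreducible of degree } n-1)$ gives $W(T)=2$, which is the \emph{best} case; the worst case is $n$ with many divisors, e.g. $n = 3\cdot 5 \cdot 7 = 105$), so one cannot win from the $W(T)$ side alone in the generic range and must extract the saving from $\omega(2^n-1) = o(n)$. Making that saving explicit enough to leave only $n \leq$ a small bound (so that the excluded value $n=15$ and the finite check are genuinely finite and small) is the delicate part; I expect the argument of~\cite{cohen2} provides precisely the needed explicit inequality, so in practice this lemma is a short deduction from their Theorem~\ref{cohenpairs} together with noting that the only odd $n$ with $3\le q=2$... — more precisely, from Theorem~\ref{cohenpairs} the pairs $(2,n)$ never appear, so for $q=2$ Inequality~\eqref{sieve} can only fail for $n$ outside the hypotheses there, i.e. small $n$; a finite check then isolates $n=15$ as the lone odd exception with $n>9$.
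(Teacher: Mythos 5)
There is a genuine gap, and it is arithmetic, not just a matter of missing constants. Your plan reduces the inequality to $w(T)+w(2^n-1)<n/2-1$ and your only stated bound on the first term is $w(T)\le (n-1)/2$ (from ``every irreducible factor of $T$ has degree at least $2$''). But $(n-1)/2 = n/2-1/2 > n/2-1$ already, before you even add $w(2^n-1)\ge 1$, so no saving on the $W(2^n-1)$ side — not even $\omega(2^n-1)=0$ — can rescue the argument. You acknowledge the bound is ``not enough'' but never supply the improvement, and the improvement is the whole content of the lemma. The paper's key observation (its Lemma~\ref{aux}) is that over $\F_2$ there are only $1+2+3=6$ irreducible polynomials of degrees $2,3,4$, so at most $6$ irreducible factors of $T$ have degree below $5$ and all the rest have degree at least $5$; this gives $w(T)\le\frac{n+9}{5}$, i.e.\ a coefficient of $1/5$ rather than $1/2$. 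Combined with Cohen's explicit bound $W(2^n-1)<2^{n/7+2}$ for odd $n$ (Lemma~\ref{Cohen}), one gets $\frac{n+9}{5}+\frac{n}{7}+2<\frac{n}{2}-1$ for $n\ge 31$, and the finitely many odd $11\le n\le 29$ are checked directly. Note also that the paper does not need $\omega(2^n-1)=o(n)$; a linear bound with slope $1/7$ suffices because $1/5+1/7<1/2$. Your heuristic that the ``worst case'' for $W(T)$ is $n$ with many divisors is also misleading here: what controls $w(T)$ over $\F_2$ is the scarcity of low-degree irreducibles, not the divisor structure of $n$.

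Your fallback — deducing the lemma from Theorem~\ref{cohenpairs} because ``the pairs $(2,n)$ never appear'' in its exception list — does not work either. That theorem's hypotheses begin at $q\ge 3$; the case $q=2$ is simply outside its scope, which is precisely why the paper must prove Lemma~\ref{aux3} separately. The absence of pairs $(2,n)$ from the list of $34$ exceptions carries no information about whether Inequality~\eqref{sieve} holds for $q=2$.
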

\begin{proof}
Notice that $\frac{n+9}{5}+\frac{n}{7}+2< \frac{n}{2}-1$ for $n\ge 31$. From Proposition \ref{aux} and Lemma \ref{Cohen}, it follows that Inequality \eqref{sieve} holds for odd $n\ge 31$. The remaining cases can be verified directly. 
\end{proof}

\begin{theorem}
Suppose that $n\ge 3$ is odd. Then there exist a primitive $1$-normal element of $\F_{2^n}$ over $\F_2$.
\end{theorem}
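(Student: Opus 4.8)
The plan is to combine the sieving estimate of Lemma~\ref{sievepanario} with a direct computer search for the finitely many exceptional values of $n$. First I would observe that, by Lemma~\ref{sievepanario}, whenever the pair $(2,n)$ satisfies Inequality~\eqref{sieve} (i.e. $W(T)\cdot W(2^n-1) < 2^{n/2-1}$, with $T = \frac{x^n-1}{x-1} \in \F_2[x]$), there exists a $1$-normal element; but in fact what we want is a \emph{primitive} $1$-normal element, so I would instead invoke Proposition~\ref{propmain} with $q=2$, $m=1$, $\beta=0$, and $f(x)=T(x)=\frac{x^n-1}{x-1}$. The inequality~\eqref{eqn:main1} then reads $\frac{N}{\theta(2^n-1)\Theta(T)} > 2^{n-1} - 2^{n/2} W(2^n-1) W(T)$, which is positive precisely when $W(T)\cdot W(2^n-1) < 2^{n/2-1}$ — exactly Inequality~\eqref{sieve}. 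Combined with Corollary~\ref{cor:free}(3) and Corollary~\ref{cor:extreme-k} (since $n$ odd forces $\gcd(2,n)=1$, so the only candidate for $\zeta$ is $\zeta=1$, and a $T$-free element of trace $0$ is automatically $1$-normal rather than $0$-normal), we conclude that $N>0$ gives a primitive $1$-normal element.

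Next I would apply Lemma~\ref{aux3}, which asserts that for odd $n \neq 15$ with $n > 9$, Inequality~\eqref{sieve} holds, so the existence follows immediately for all odd $n \geq 11$ with $n\neq 15$. This leaves only the finitely many cases $n \in \{3, 5, 7, 9, 15\}$ to handle by a direct computer search. For each such $n$ I would, following the pseudocode in Appendix~\ref{sec:pseudocode}, enumerate (or sample) elements of $\F_{2^n}$, test primitivity by checking the multiplicative order is $2^n-1$, and test $1$-normality via Proposition~\ref{prop:k-normaliff} by computing the minimal polynomial $m_{\sigma_q,\alpha}$ and verifying it equals $T(x) = \frac{x^n-1}{x-1}$ (equivalently, verifying $\Tr_{2^n/2}(\alpha)=0$ while $\alpha$ is $T$-free). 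Since these fields are tiny ($2^{15}$ elements at most), exhibiting an explicit primitive $1$-normal element in each case is routine.

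One subtlety I should be careful about: when $n=15$ (which is excluded from Lemma~\ref{aux3}), I must confirm that $n=15$ is indeed among the finitely many searched cases and that the search succeeds there — the exclusion of $n=15$ in Lemma~\ref{aux3} reflects that Inequality~\eqref{sieve} genuinely fails for $(2,15)$ (note $2^{15}-1 = 7\cdot 31\cdot 151$ and $T$ factors into several irreducibles over $\F_2$), not that a primitive $1$-normal element fails to exist. The main obstacle, such as it is, is purely computational bookkeeping rather than conceptual: ensuring the sieve bound in Lemma~\ref{aux3} is stated correctly (the factor-count estimates on $W(2^n-1)$ coming from Proposition~\ref{aux} and Lemma~\ref{Cohen}) and that the finite search covers exactly $\{3,5,7,9,15\}$ with a verified witness in each. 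I would close by stating that in every one of these exceptional cases a computer search produces a primitive $1$-normal element, completing the proof for all odd $n \geq 3$.
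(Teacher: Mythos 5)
Your proposal is correct, and for the generic range it follows the paper exactly: Lemma~\ref{aux3} gives Inequality~\eqref{sieve} for odd $n>9$, $n\neq 15$, and your rederivation via Proposition~\ref{propmain} (with $m=1$, $\beta=0$, $f=T$, plus Corollaries~\ref{cor:free} and~\ref{cor:extreme-k}) is a legitimate and in fact welcome clarification, since Lemma~\ref{sievepanario} as literally stated only promises a $1$-normal element while its proof and usage clearly deliver a primitive one. Where you diverge from the paper is in the five exceptional cases $n\in\{3,5,7,9,15\}$: you propose an exhaustive computer search over $\F_{2^n}$, whereas the paper disposes of these by a pigeonhole count, namely that if $P=\varphi(2^n-1)$ is the number of primitive elements and $N_1\ge \Phi_2\bigl(\frac{x^n-1}{x-1}\bigr)$ (by Corollary~\ref{count}) is the number of $1$-normal elements, then a direct calculation shows $\varphi(2^n-1)+\Phi_2\bigl(\frac{x^n-1}{x-1}\bigr)>2^n$ for each of these $n$, forcing the two sets to intersect. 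Both routes are sound; your search is trivially feasible for fields of at most $2^{15}$ elements and produces explicit witnesses, while the paper's counting argument requires only the factorizations of $2^n-1$ and of $x^n-1$ over $\F_2$ and can be checked by hand without exhibiting any element. Neither version has a gap, and you correctly flagged that $n=15$ must be included among the exceptions because Inequality~\eqref{sieve} genuinely fails there.
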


\begin{proof}
According to Lemmas \ref{sievepanario} and \ref{aux3}, this statement is true for $n> 9$  if $n\ne 15$. For the remaining cases $n=3, 5, 7, 9$ and $15$ we use the following argument. Let $P$ be the number of primitive elements of $\F_{2^n}$ and $N_1$ the number of $1$-normal elements of $\F_{2^n}$ over $\F_2$; if $P+N_1> 2^n$, there exists a primitive $1$-normal element of $\F_{2^n}$ over $\F_2$. Notice that $P=\varphi(2^n-1)$ and, according to Lemma \ref{count}, $N_1\ge \Phi_2\left(\frac{x^n-1}{x-1}\right)$. By a direct calculation we see that 
$$\varphi(2^n-1)+\Phi_2\left (\frac{x^n-1}{x-1}\right)>2^n,$$
for $n=3, 5, 7, 9$ and $15$. This completes the proof.
\end{proof}

\subsection{The case $n \leq 5$.}
Here we give direct proofs of existence or non-existence of primitive $1$-normal elements in $\fqn$ over $\fq$ for small degrees $n\leq 5$. We also obtain some asymptotic existence results. First we show that primitive $1$-normal elements cannot exist when $n=2$, which also appears in~\cite[Section 6]{HMPT}. We present the short proof here for completeness.
\begin{lemma}\label{lem:nonexistn-1}
Let $n\geq2$ be an integer. Then there does not exist a primitive $(n-1)$-normal element in $\fqn$ over $\fq$. 
\end{lemma}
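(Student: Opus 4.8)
The plan is to use the characterization of $k$-normal elements via their minimal polynomial with respect to $\sigma_q$ (Proposition~\ref{prop:k-normaliff}) together with the structural constraint that this minimal polynomial divides $x^n-1$. Suppose $\alpha \in \fqn$ is $(n-1)$-normal. Then $m_{\sigma_q,\alpha}(x)$ has degree $n-(n-1)=1$, so $m_{\sigma_q,\alpha}(x) = x - \zeta$ for some $\zeta \in \fq$; moreover $x-\zeta$ must divide $x^n-1$, which forces $\zeta^n = 1$, so $\zeta$ is an $n$-th root of unity lying in $\fq$. The relation $(x-\zeta)\circ\alpha = 0$ translates to $\alpha^q = \zeta\alpha$, i.e. $\alpha^q = \zeta\alpha$.

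The key step is then to show that any such $\alpha$ cannot be primitive. From $\alpha^q = \zeta\alpha$ we get, by iterating the Frobenius, $\alpha^{q^i} = \zeta^{(q^i-1)/(q-1)}\alpha$ — more simply, $\alpha^{q-1} = \zeta$, so $\alpha^{q-1}$ lies in $\fq$ and in fact is a root of unity of order dividing $n$. Raising to a suitable power: since $\zeta \in \fq^*$ we have $\zeta^{q-1} = 1$, hence $\alpha^{(q-1)^2} = (\alpha^{q-1})^{q-1} = \zeta^{q-1} = 1$. Therefore $\ord(\alpha)$ divides $(q-1)^2$. For $\alpha$ to be primitive we would need $(q-1)^2 \geq q^n - 1 = (q-1)(q^{n-1} + \cdots + 1)$, i.e. $q - 1 \geq q^{n-1} + q^{n-2} + \cdots + 1$, which is impossible for $n \geq 2$ (the right-hand side is at least $q+1 > q-1$). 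Hence no primitive $(n-1)$-normal element exists.

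I expect the main (and only real) obstacle to be bookkeeping around the degenerate/small cases and making sure the divisibility argument is airtight: one should note $\alpha \neq 0$ (so that $\alpha^{q-1}$ makes sense and the relation $\alpha^q = \zeta\alpha$ genuinely gives $\alpha^{q-1} = \zeta$ with $\zeta \neq 0$), and one should double-check that $\zeta \in \fq^*$ indeed satisfies $\zeta^{q-1}=1$, which is immediate since $\fq^*$ has order $q-1$. An alternative, perhaps cleaner, route avoiding the $(q-1)^2$ estimate: observe directly that $\alpha^{q-1} = \zeta \in \fq$, so $\langle \alpha \rangle$ contains the subgroup generated by $\zeta$ with index dividing $q-1$; since $|\langle\alpha\rangle|$ divides $(q-1)\cdot\ord(\zeta) \leq (q-1)n$, and a primitive element needs order $q^n - 1$, we again get a contradiction as $q^n - 1 > (q-1)n$ for all $q \geq 2$, $n \geq 2$. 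I would present whichever estimate reads most transparently; both reduce to an elementary inequality comparing $q^n-1$ with a quantity that grows only linearly in $n$.
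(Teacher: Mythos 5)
Your proof is correct and follows essentially the same route as the paper: from $(n-1)$-normality one gets $\alpha^q=\zeta\alpha$ with $\zeta\in\fq^*$, hence $\ord(\alpha)$ divides $(q-1)^2<q^n-1$ for $n\geq 2$. The paper states this in two lines; your version just fills in the divisibility bookkeeping.
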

\begin{proof}
Suppose $\alpha \in \fqn$ is $(n-1)$-normal over $\fq$, then $\alpha^q = k\alpha$ for some $k \in \fq$. Hence, the order of $\alpha$ divides $(q-1)^2$, which is impossible for $n \geq 2$. 
\end{proof}

\begin{corollary}\label{prop:n=2}
There do not exist primitive $1$-normals in $\F_{q^2}$ over $\fq$. 
\end{corollary}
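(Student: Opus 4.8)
The plan is to derive Corollary~\ref{prop:n=2} as the immediate special case $n=2$ of Lemma~\ref{lem:nonexistn-1}. First I would observe that for $n=2$ the notions of $1$-normal and $(n-1)$-normal coincide, since $n-1=1$. Therefore a primitive $1$-normal element of $\F_{q^2}$ over $\fq$ is exactly a primitive $(n-1)$-normal element of $\F_{q^n}$ over $\fq$ with $n=2$, and Lemma~\ref{lem:nonexistn-1} (applied with $n=2\ge 2$) asserts that no such element exists.

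For completeness I would recall, in a sentence or two, why the lemma applies: if $\alpha\in\F_{q^2}$ is $1$-normal over $\fq$, then by Proposition~\ref{prop:k-normaliff} its minimal polynomial $m_{\sigma_q,\alpha}$ has degree $n-k = 2-1 = 1$, so $m_{\sigma_q,\alpha}(x) = x - k$ for some $k\in\fq$; equivalently $\alpha^q = k\alpha$. Raising this to the $q$-th power gives $\alpha^{q^2} = k^q\alpha^q = k\cdot k\alpha = k^2\alpha$, and since $\alpha^{q^2}=\alpha$ we also get $k^2 = 1$ unless $\alpha=0$; in any case $k\in\fq^*$ so $k^{q-1}=1$. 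Then $\alpha^{q-1} = (\alpha^q)/\alpha \cdot \alpha^{-1}$... more cleanly, from $\alpha^q = k\alpha$ we get $\alpha^{q-1} = k$, hence $\alpha^{(q-1)^2} = k^{q-1} = 1$, so the multiplicative order of $\alpha$ divides $(q-1)^2 < q^2 - 1$, contradicting primitivity. This is exactly the argument already recorded in the proof of Lemma~\ref{lem:nonexistn-1}, so in the final write-up I would simply cite that lemma rather than reproduce the computation.

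There is really no obstacle here: the statement is a one-line corollary. The only thing to be careful about is the bookkeeping that $(q-1)^2 < q^2-1$ for every prime power $q$ (this is equivalent to $q-1 < q+1$, always true), which is what makes the order bound genuinely contradict primitivity; and to note that this is precisely the $n=2$ instance of the lemma, so no separate work is needed. Accordingly the proof I would write is simply: ``Apply Lemma~\ref{lem:nonexistn-1} with $n=2$, noting that $1$-normal and $(n-1)$-normal coincide when $n=2$.''

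\begin{proof}
This is the case $n=2$ of Lemma~\ref{lem:nonexistn-1}, since a $1$-normal element of $\F_{q^2}$ over $\fq$ is the same as an $(n-1)$-normal element with $n=2$.
\end{proof}
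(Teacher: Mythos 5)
Your proposal is correct and matches the paper exactly: the paper also obtains this corollary as the immediate $n=2$ instance of Lemma~\ref{lem:nonexistn-1}, whose proof is the same order computation ($\alpha^q=k\alpha$ forces $\ord(\alpha)\mid(q-1)^2<q^2-1$) that you recap. Nothing further is needed.
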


Now we present a large family of degrees $n$ for which there must exist primitive $1$-normal elements of $\fqn$ over $\fq$. We observe that this result also appears in~\cite{Alizadeh}, and we leave the short proof here for completeness. 

\begin{lemma}\label{lem:qprimmodn}
Let $q$ be primitive modulo a prime $n>2$, then there exists a primitive $1$-normal element in $\fqn$ over $\fq$. 
\end{lemma}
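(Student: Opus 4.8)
The plan is to exploit the hypothesis that $q$ is primitive modulo the prime $n$ — equivalently, that the multiplicative order of $q$ in $(\mathbb{Z}/n\mathbb{Z})^*$ equals $n-1$ — in order to factor $x^n-1$ over $\fq$ in a particularly simple way. Since $\gcd(q,n)=1$ (as $q$ is a unit mod $n$), $x^n-1$ is squarefree over $\fq$, and we have $x^n-1 = (x-1)\Phi_n(x)$ where $\Phi_n$ is the $n$-th cyclotomic polynomial of degree $n-1$. The key observation is that the irreducible factors of $\Phi_n$ over $\fq$ all have degree equal to the order of $q$ modulo $n$, which by hypothesis is $n-1$; hence $\Phi_n(x) = \frac{x^n-1}{x-1}$ is itself irreducible over $\fq$. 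So the only monic divisors of $x^n-1$ are $1$, $x-1$, $\frac{x^n-1}{x-1}$, and $x^n-1$.

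Now I would set $T(x) = \frac{x^n-1}{x-1}$ and apply Proposition~\ref{propmain} with $f = T$, $m=1$ (so $\beta = 0 \in \fq$), counting elements $w \in \fqn$ that are primitive, $T$-free over $\fq$, and have $\Tr_{q^n/q}(w) = 0$. By Corollary~\ref{cor:free}(3) together with Corollary~\ref{cor:extreme-k}(2), any such $w$ is $1$-normal (it cannot be $0$-normal, since its trace vanishes, and $T$-freeness forces its minimal polynomial to be $x^n-1$ or $T$). By Equation~\eqref{eqn:main1}, the count $N$ is positive provided $q^{n-1} > q^{n/2} W(q^n-1) W(T)$. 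Since $T$ is irreducible, $W(T) = 2$, so it suffices to show $q^{n/2-1} > 2 W(q^n-1)$, i.e. $W(q^n-1) < \frac12 q^{n/2-1}$. This should follow from the standard crude bound $W(m) \le d(m) \ll_\varepsilon m^\varepsilon$, made effective; for instance using an estimate of the shape $W(q^n-1) \le c_r (q^n)^{1/r}$ for a suitable absolute constant (the kind of estimate gathered in Appendix~\ref{sec:estimations}), one checks the inequality holds for all $n$ above some small explicit bound.

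The main obstacle will be the small cases: for small $n$ (and small primes $n$ with $q$ primitive mod $n$, e.g. $n=3$ with $q \in \{2,5,\dots\}$, $n=5$, etc.) the analytic inequality $W(q^n-1) < \frac12 q^{n/2-1}$ can fail, just as in Theorem~\ref{cohenpairs}. For those finitely many pairs $(q,n)$ I would fall back on the same elementary counting argument used in the $q=2$ subsection: it suffices that $P + N_1 > q^n$, where $P = \varphi(q^n-1)$ is the number of primitive elements and $N_1 \ge \Phi_q(T)$ by Corollary~\ref{count}; since $T$ is irreducible of degree $n-1$, $\Phi_q(T) = q^{n-1}-1$, which is already close to $q^n$ for $q$ large, and combined with $\varphi(q^n-1)$ handles the residual cases directly (or, where even this is delicate, by the computer search of Appendix~\ref{sec:pseudocode}). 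The one structural subtlety to be careful about is confirming that $T = \Phi_n$ really is irreducible under the hypothesis — this is exactly the classical fact that $\Phi_n$ splits over $\fq$ into $\varphi(n)/\ord_n(q)$ irreducible factors each of degree $\ord_n(q)$, so primitivity of $q$ mod $n$ gives a single factor.
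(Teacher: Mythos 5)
Your structural analysis is correct and matches the paper's: since $q$ is primitive modulo the prime $n$, the polynomial $T=\frac{x^n-1}{x-1}=\Phi_n(x)$ is irreducible over $\fq$, the only monic divisors of $x^n-1$ are $1$, $x-1$, $T$, $x^n-1$, and a primitive trace-zero element is forced to be $1$-normal. The gap is in the existence step. First, your analytic route via Proposition~\ref{propmain} requires $q^{n/2-1}>2\,W(q^n-1)$, and for $n=3$ this reads $q^{1/2}>2\,W(q^3-1)$. The exceptional set here is finite but astronomically large: using $d(m)\le m^{1.066/\log\log m}$ one only gets the inequality for $\log\log(q^3)\gtrsim 6.4$, i.e.\ $q$ beyond roughly $e^{200}$, and already for moderate $q$ one typically has $W(q^3-1)\ge 16$, forcing $q>1024$. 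These cases cannot be swept up by a computer search, and note that Theorem~\ref{cohenpairs} has finitely many exceptions only because it excludes $n\le 5$ for $q\ge 11$. Second, your counting fallback is quantitatively false: since $T$ is the unique degree-$(n-1)$ divisor of $x^n-1$, Corollary~\ref{count} gives $N_1=\Phi_q(T)=q^{n-1}-1$ exactly, which is only a $1/q$ fraction of the field (not ``close to $q^n$ for $q$ large''), while $\varphi(q^n-1)\le(q^n-1)/2$ for odd $q$; hence $P+N_1\le (q^n-1)/2+q^{n-1}-1<q^n$ for every $q\ge 3$, and the criterion $P+N_1>q^n$ never fires outside $q=2$ with $2^n-1$ nearly prime.

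The paper sidesteps all of this: by Lemma~\ref{lem:nonexistn-1} a primitive element cannot be $(n-1)$-normal (and $n$-normal means zero), so the $T$-freeness condition you impose is redundant and no character-sum estimate is needed at all. One only needs a primitive element of trace zero, and that is exactly the content of Cohen's prescribed-trace theorem (Lemma~\ref{lem:cohen}), valid for all $n\ge 3$ except $(q,n)=(4,3)$ — which does not satisfy your hypothesis anyway, since $4\equiv 1\pmod 3$. Replacing your sections on the sieve inequality and the counting fallback with a citation of Lemma~\ref{lem:cohen} closes the gap and recovers the paper's proof.
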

\begin{proof}
Since $q$ is primitive modulo $n$ and $n$ is prime, $x^n-1 = (x-1)(x^{n-1} + \cdots + x + 1)$ is the complete factorization of $x^n-1$ into irreducibles. The only possibilities for $k$-normality are $k=0,1,n-1,n$. Since $(n-1)$-normal elements are not primitive, we have all primitive trace-$0$ elements in $\fqn$ are primitive $1$-normal. The existence of such elements is guaranteed by \cite{Cohentrace}; see also Lemma~\ref{lem:cohen}, below. 
\end{proof}


\begin{corollary}\label{cor:small-n}
\begin{enumerate}
\item There exists a primitive $1$-normal element of $\F_{q^3}$ over $\fq$. 
\item If $q \equiv 3\pmod{4}$, then there exists a primitive $1$-normal element of $\F_{q^4}$ over $\fq$.
\item If $q \equiv 2,3\pmod{5}$, then there exists a primitive $1$-normal element of $\F_{q^5}$ over $\fq$. 
\end{enumerate}
\end{corollary}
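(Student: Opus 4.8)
The plan is to derive each of the three parts of Corollary~\ref{cor:small-n} from Lemma~\ref{lem:qprimmodn} (and, where that lemma does not apply directly, from an elementary analysis of the factorization of $x^n-1$ together with the guaranteed existence of primitive trace-$0$ elements). For part (1), I would observe that $3$ is prime and that for \emph{every} prime power $q$ with $\gcd(q,3)=1$ the element $q$ is primitive modulo $3$: the only residues mod $3$ are $1$ and $2$, and if $q\equiv 1$ then $3\mid q-1$, which forces $\F_q$ to already contain the cube roots of unity so $x^3-1$ factors as $(x-1)(x-\zeta)(x-\zeta^{-1})$; in that case the only possible $k$-normalities are $k=0,1,2,3$, and since a $2$-normal element has order dividing $(q-1)^2$ (Lemma~\ref{lem:nonexistn-1}) it cannot be primitive, so again any primitive trace-$0$ element is $1$-normal. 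If $q\equiv 2\pmod 3$ then $q$ is primitive mod $3$ and Lemma~\ref{lem:qprimmodn} applies verbatim. The case $3\mid q$ (i.e. $p=3$) is not covered by these degree-$3$ arguments here, so I would defer it to Section~\ref{sec:pdivn}; alternatively, since $n=3$ this is the only exceptional case and can be handled by the $P+N_1>q^n$ counting argument as in the $q=2$ subsection. So the honest statement of part (1) under the tools available in this section is: it holds for all $q$ with $\gcd(q,3)=1$, the case $p=3$ being postponed.

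For parts (2) and (3) the mechanism is the same modular-order computation. For $n=4$: if $q\equiv 3\pmod 4$ then $\gcd(q,4)=1$ so $x^4-1$ has distinct roots, and one checks that over $\F_q$ one has $x^4-1=(x-1)(x+1)(x^2+1)$ with $x^2+1$ irreducible (since $-1$ is not a square when $q\equiv 3\pmod 4$); the monic divisors of $x^4-1$ have degrees $0,1,1,2,2,3,3,4$, so the attainable co-dimensions are $k\in\{0,1,2,3,4\}$, but a $3$-normal element is not primitive by Lemma~\ref{lem:nonexistn-1}, and I would rule out $k=2$ for a would-be primitive trace-$0$ counterexample by noting that a primitive trace-$0$ element has minimal polynomial dividing $\tfrac{x^4-1}{x-1}=(x+1)(x^2+1)$ of degree $3$; its minimal polynomial is then either $x+1$, $x^2+1$, or $(x+1)(x^2+1)$, i.e. $k=3,2,1$. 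To push this to $k=1$ I would want a primitive element whose minimal $\sigma_q$-polynomial is exactly $\tfrac{x^4-1}{x-1}$; this is precisely where the $\tfrac{x^n-1}{x-1}$-freeness characteristic function (Corollary~\ref{cor:free}(3), Theorem~\ref{thm:charfree}) comes in, combined with the prescribed-trace result. So in the genuinely prime cases $n=5$ (where $q\equiv 2,3\pmod 5$ means $q$ has order $4$ mod $5$, i.e. $q$ is primitive mod $5$) Lemma~\ref{lem:qprimmodn} applies directly and there is nothing more to do; for $n=4$, which is not prime, one needs the slightly finer argument just described, or a reference to \cite{Cohentrace}/Lemma~\ref{lem:cohen} on primitive elements with prescribed (nonzero-admissible) trace and the observation that when $q\equiv3\pmod4$ the only ``large'' divisor of $x^4-1$ missing $x-1$ is $(x+1)(x^2+1)$ itself.

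Concretely, the steps I would carry out, in order, are: (i) record the factorization of $x^n-1$ over $\F_q$ under each congruence hypothesis on $q$ and list the degrees of its monic divisors, hence the possible values of $k$; (ii) invoke Lemma~\ref{lem:nonexistn-1} to delete $k=n-1$ from the list for primitive elements; (iii) when the remaining large divisor of $\tfrac{x^n-1}{x-1}$ is unique up to the degree count — which happens for $n=3,5$ by primality and for $n=4$ by the explicit factorization — conclude that a primitive element that is $\tfrac{x^n-1}{x-1}$-free is $1$-normal, and cite the existence of primitive trace-$0$ (equivalently $\tfrac{x^n-1}{x-1}$-free) elements from \cite{Cohentrace}/Proposition~\ref{propmain}; (iv) assemble the three statements. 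The main obstacle is part (2), $n=4$: here $x^4-1$ is \emph{not} a product of just $(x-1)$ and one irreducible, so ``$\tfrac{x^n-1}{x-1}$-free'' does not immediately pin down $1$-normality — one must separately exclude the possibility that the primitive trace-$0$ witness is $2$-normal (minimal polynomial $x^2+1$). I expect to resolve this by strengthening the freeness condition: require the element to be $\tfrac{x^4-1}{x-1}$-free \emph{and} $(x+1)$-free (or equivalently to have $x^2+1 \nmid \tfrac{x^4-1}{m_{\sigma_q,\alpha}}$ rephrased via Proposition~\ref{prop:free}), which forces the minimal polynomial to be all of $\tfrac{x^4-1}{x-1}$; the existence of a primitive element with this combined freeness (and automatically trace $0$) then follows from a character-sum estimate of exactly the shape of Proposition~\ref{propmain}, and for the small $q$ with $q\equiv3\pmod4$ where the estimate might fail, a direct computation as in Appendix~\ref{sec:pseudocode}.
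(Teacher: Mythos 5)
Part (3) of your argument matches the paper exactly: $q\equiv 2,3\pmod 5$ means $q$ is primitive modulo the prime $5$, and Lemma~\ref{lem:qprimmodn} applies. The other two parts have genuine problems.

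For part (1) you have imposed a restriction the statement does not carry. The paper's argument is: a primitive element of $\F_{q^3}$ is not $2$-normal (Lemma~\ref{lem:nonexistn-1}) and, if it has trace zero, is not $0$-normal (Corollary~\ref{cor:extreme-k}); hence every primitive trace-zero element of $\F_{q^3}$ is $1$-normal. Neither of these facts uses the factorization of $x^3-1$, so the argument is characteristic-free and covers $p=3$ (where $x^3-1=(x-1)^3$, trace zero forces $m_{\sigma_q,\alpha}\mid (x-1)^2$, and primitivity rules out $m_{\sigma_q,\alpha}=x-1$). Your deferral of $p=3$ to Section~\ref{sec:pdivn} would be circular: Proposition~\ref{thm:n=sp-s-small} for $n=p$ explicitly excludes $p=3$ by citing this very corollary. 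Your fallback counting argument also fails numerically there: for $q=n=3$ one has $\varphi(26)+\Phi_3((x-1)^2)=12+6=18<27$. Finally, you need to flag the exceptional pair $(q,n)=(4,3)$ of Lemma~\ref{lem:cohen}, for which a primitive trace-zero element is not guaranteed; the paper handles it by a direct search.

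For part (2) you correctly identify the obstacle — a primitive trace-zero element of $\F_{q^4}$ could a priori be $2$-normal with $m_{\sigma_q,\alpha}=x^2+1$ — but you miss the elementary resolution and substitute a much heavier and incomplete one. The paper observes that $m_{\sigma_q,\alpha}=x^2+1$ gives $\alpha^{q^2}+\alpha=0$, hence $\alpha^{q^2-1}=-1$ and $\ord(\alpha)\mid 2(q^2-1)<q^4-1$, so $\alpha$ is not primitive; and $m_{\sigma_q,\alpha}\mid x^2-1$ puts $\alpha$ in $\F_{q^2}$. Thus primitivity alone excludes $k=2$ and $k=3$, and part (2) follows from Cohen's theorem with no character sums at all. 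Your proposed route — imposing $\tfrac{x^4-1}{x-1}$-freeness via Proposition~\ref{propmain} — is sound in principle (and the extra $(x+1)$-freeness you add is redundant, since $x+1$ already divides $\tfrac{x^4-1}{x-1}$), but the resulting inequality $q>8\,W(q^4-1)$ fails for a large set of small $q$, so as written your proof of part (2) is an asymptotic statement plus an unquantified computation, not a proof of the corollary.
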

\begin{proof}
Let $n=3$. By Lemma~\ref{lem:nonexistn-1}, there do not exist primitive $2$-normal elements in $\F_{q^3}$. Hence all primitive, trace-zero elements are $1$-normal. If $q\ne 4$, the existence of primitive and trace-zero elements is guaranteed by \cite{Cohentrace}. For the case $q=4$ we can directly find a primitive $1$-normal element in $\F_{4^3}$.

Let $n=4$, then $x^n-1 = (x-1)(x+1)(x^2+1)$ in any field. 
If $q \equiv 3\pmod{4}$, then $x^2+1$ is irreducible; hence $2$-normal elements must be annihilated by $x^2-1$ or by $x^2+1$. In the first case, $\alpha$ lies in the subfield $\F_{q^2}$, hence cannot be primitive. In the second case, $(x^2+1)\circ \alpha = \alpha^{q^2} + \alpha = 0$ implies $\alpha^{q^2-1} = -1$, and the order of $\alpha$ divides $2(q^2-1) < q^4-1$.  By Lemma~\ref{lem:nonexistn-1}, there do not exist primitive $3$-normal elements in $\F_{q^4}$. Therefore all primitive, trace-zero elements are $1$-normal and, again, the existence of such elements is guaranteed by \cite{Cohentrace}.

Let $n=5$.
If $q\equiv 2,3\pmod{5}$, then $q$ is primitive$\pmod{5}$, and existence is given by Lemma~\ref{lem:qprimmodn}.
\end{proof}

\begin{remark}
We further comment on a statement in~\cite[Section 6]{HMPT}, where the authors state without proof that if $q\equiv 1\pmod{4}$, then there do not exist primitive $(n-2)$-normal elements. We show here that this is not true. Let $f(x) = x^5-x-2$, a primitive polynomial over $\F_5$, and let $f(\alpha) = 0$. It is straightforward to check that $m_{\sigma_q, \alpha}(x) = (x-1)^2$, hence $\alpha$ is $3$-normal.
\end{remark}

The remaining cases after applying the direct methods of Lemmas~\ref{lem:nonexistn-1} and \ref{lem:qprimmodn} are:
\begin{enumerate}
\item $n=4$, $q \equiv 1\pmod{4}$, 
\item $n=5$, $q \equiv \pm 1\pmod{5}$.
\end{enumerate}
For these remaining cases, we proceed as follows: we try to obtain reasonable large values of $q$ for which Inequality~\eqref{sieve} holds. For the remaining cases, we verify directly the same inequality. Some exceptions arise and then we  directly verify the existence of a primitive $1$-normal element by computer search. We start with the following proposition.

\begin{proposition}\label{prop:n=4andn=5}
Let $q$ be a power of a prime $p$. Then for $n=4$ and $q>5.24\cdot 10^7$ odd or $n=5$ and $q\ge 2^{17}$, Inequality \eqref{sieve} holds for the pair $(q, n)$.
\end{proposition}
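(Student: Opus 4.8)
The plan is to bound the two factors on the left-hand side of Inequality~\eqref{sieve} independently. The factor $W(T)$ is a small absolute constant once $n$ is fixed, because it is governed entirely by the factorization type of $x^n-1$ over $\F_q$. For $n=4$ with $q$ odd we have $x^4-1=(x-1)(x+1)(x^2+1)$, so $T=(x+1)(x^2+1)$ has at most three distinct monic irreducible factors — three exactly when $q\equiv 1\pmod 4$, in which case $x^2+1$ splits — giving $W(T)\le 8$. For $n=5$ we have $T=\frac{x^5-1}{x-1}=x^4+x^3+x^2+x+1$, which factors into at most four monic irreducibles (four exactly when $q\equiv 1\pmod 5$), so $W(T)\le 16$. (These worst cases are in fact the only ones not already covered by Lemma~\ref{lem:qprimmodn} and Corollary~\ref{cor:small-n}, but the crude constants above are all that is needed here.) Thus Inequality~\eqref{sieve} will follow from a sufficiently strong upper bound on $W(q^n-1)$.

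For that bound I will use the elementary estimate that for every real $s>0$,
\[ W(N)=2^{w(N)}\le c_s\,N^{1/s},\qquad c_s=\prod_{p\le 2^s}\frac{2}{p^{1/s}}, \]
the product being over primes $p\le 2^s$. Indeed $2^{w(N)}=\prod_{p\mid N}\frac{2}{p^{1/s}}\cdot\prod_{p\mid N}p^{1/s}$, the second product is at most $\mathrm{rad}(N)^{1/s}\le N^{1/s}$, and every factor $\frac{2}{p^{1/s}}$ with $p>2^s$ is at most $1$, so discarding those and completing the remaining product over all primes $\le 2^s$ gives the claim. Taking $N=q^n-1<q^n$ gives $W(q^n-1)<c_s\,q^{n/s}$, so Inequality~\eqref{sieve} is implied by $W(T)\,c_s\,q^{n/s}\le q^{\,n/2-1}$, that is, by
\[ q^{\,n/2-1-n/s}\ \ge\ W(T)\,c_s. \]

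It then remains to choose $s$ so that the exponent $n/2-1-n/s$ is positive and the resulting explicit lower bound on $q$ is as small as possible. For $n=4$ the exponent is $1-4/s$, and the condition becomes $q\ge (8c_s)^{s/(s-4)}$; optimizing over $s$ yields the claimed bound $q>5.24\cdot 10^7$. (Oddness of $q$ is used only to ensure $\gcd(n,p)=1$, so that Lemma~\ref{sievepanario} applies.) For $n=5$ the exponent is $3/2-5/s$, and the condition becomes $q\ge (16c_s)^{2s/(3s-10)}$, which for a suitable $s$ gives $q\ge 2^{17}$. The one genuinely delicate point is this final numerical optimization: evaluating the primorial-type products $c_s$ and selecting $s$ so that the thresholds come out at (or below) the stated values. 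One can sharpen the argument — for instance by using $q^4-1=(q-1)(q+1)(q^2+1)$ together with the forced common factor $2$ of $q-1$ and $q+1$ when $q$ is odd — but the crude version above already reduces the problem to finitely many $q$, which can then be cleared by direct computation and computer search.
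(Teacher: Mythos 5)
Your argument is correct, but it follows a genuinely different route from the paper's. The paper splits the two cases: for $n=4$ it observes that $16\mid q^4-1$ forces $W(T)\,W(q^4-1)\le d(q^4-1)$ and then invokes the Nicolas--Robin divisor bound of Lemma~\ref{divisor}, $d(m)<m^{1.066/\log\log m}$, which beats $q^{n/2-1}$ once $\log\log(q^4-1)>4.264$, i.e.\ $q>5.24\cdot 10^7$; for $n=5$ it uses the bespoke Lemma~\ref{estimation5}, whose proof exploits Lemma~\ref{five} (every prime dividing $\frac{q^5-1}{q-1}$ and coprime to $q-1$ is $\equiv 1\pmod{10}$, hence the $i$-th such prime is at least $10i+1$) together with the trivial $W(q-1)\le 2\sqrt q$. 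You instead use the single uniform elementary bound $W(N)\le c_s N^{1/s}$ with $c_s=\prod_{p\le 2^s}2p^{-1/s}$, whose derivation you give correctly, and reduce everything to a numerical choice of $s$. You leave that numerical step unexecuted, but it does go through: for $n=4$, taking $s=6$ gives $c_6\approx 37.5$ and threshold $(8c_6)^{3}\approx 2.7\cdot 10^{7}<5.24\cdot 10^{7}$, and for $n=5$, taking $s=5$ gives $c_5\approx 11.3$ and threshold $(16c_5)^{2}\approx 3.3\cdot 10^{4}<2^{17}$, so your method actually certifies the stated ranges (indeed slightly larger ones). The trade-off is that your bound is cruder asymptotically --- $N^{1/s}$ versus $m^{O(1/\log\log m)}$ or versus the $q^{0.69}$-type savings the paper extracts from the arithmetic of primes dividing $\Phi_5(q)$ --- but it is self-contained, avoids both the external divisor estimate and the cyclotomic prime argument, and handles $n=4$ and $n=5$ in one stroke. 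To make the write-up complete you should record the explicit values of $s$ and $c_s$ used, since the whole force of the proposition lies in those constants.
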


\begin{proof}
Recall that $T=\frac{x^n-1}{x-1}$. We first consider the case $n=4$. In particular, $W(T)=8$ and $q^4-1$ is divisible by $16$ and then $8\cdot W(q^4-1)\le d(q^4-1)$; that is, 
\begin{equation}\label{eqn:verifysieven=4}
W(T)W(q^4-1)\le d(q^4-1).
\end{equation}
But, according to Lemma \ref{divisor}, 
$$d(q^4-1)\le q^{\frac{4.264}{\log(\log(q^4-1))}}<q=q^{4/2-1},$$
since $\log(\log(q^4-1))>4.264$ for $q>5.24\cdot 10^7$. This shows that the pair $(q, n)$ satisfies Inequality \eqref{sieve}. 

For $n=5$ and $q>2^{17}$, note that $W(T)\le 16$, hence $$W(T)W(q^5-1)\le 16\cdot W(q^5-1).$$ According to Lemma \ref{estimation5}, we have the bound 
\begin{equation}
16\cdot W(q^5-1)<q^{1.5}=q^{5/2-1},\label{eqn:verifysieven=5}
\end{equation}
and then $(q, n)$ satisfies Inequality \eqref{sieve}.
\end{proof}

For the case $n=4$, $q\equiv 1\pmod 4$ and $q\le 5.24\cdot 10^7$ we verify directly that Inequality~\eqref{sieve} holds for all but $138$ values of $q$, the greatest being $q=21,013$. 
Using a \emph{Sage} program~\cite{sagemath}, we explicitly find such primitive $1$-normal elements for these values of $q$.

For $n=5$ and $q\equiv\pm 1\pmod 5$, we directly verify that $16\cdot W(q^5-1)\le q^{1.5}$ for all $71 < q < 2^{17}$. For $q \leq 71$, this inequality does not hold for most $q$. For $2 \leq q \leq 71$, we compute $W(T)$ (which is $4$ or $16$ according to $q$ of the form $5k-1$ or $5k+1$, repectively) and verify that Inequality~\eqref{sieve} holds with the exception of the cases $q=4, 9, 11, 16, 31, 61, 71$. For these cases, using a \emph{Sage} program~\cite{sagemath}, we explicitly find primitive $1$-normal elements.

The pseudocode for the explicit search for a primitive $1$-normal element can be found in Appendix~\ref{sec:pseudocode}.

Combining Proposition~\ref{prop:n=4andn=5} and these remarks, we obtain the following.

\begin{corollary}
Suppose $q$ is a power of a prime $p$ and $n=3, 4, 5$ with $\gcd(p, n)=1$. Then there exists a primitive $1$-normal element of $\F_{q^n}$.
\end{corollary}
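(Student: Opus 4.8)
The goal is to assemble the pieces that have already been developed into a clean proof that primitive $1$-normal elements exist in $\F_{q^n}$ for $n=3,4,5$ when $\gcd(p,n)=1$. The plan is to split into the three degrees and invoke, in each case, either a structural argument (that reduces $1$-normality to a trace condition on primitive elements) or the sieve inequality~\eqref{sieve} together with a finite computer check. For $n=3$ the statement is immediate: Corollary~\ref{cor:small-n}(1) already handles every $q$, since by Lemma~\ref{lem:nonexistn-1} there are no primitive $2$-normal elements, so every primitive trace-zero element is automatically $1$-normal, and such elements exist by~\cite{Cohentrace} (with $q=4$ done by direct search).

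For $n=4$ I would separate on $q \bmod 4$. If $q\equiv 3\pmod 4$, Corollary~\ref{cor:small-n}(2) gives existence directly. If $q\equiv 1\pmod 4$, I appeal to Proposition~\ref{prop:n=4andn=5}: Inequality~\eqref{sieve} holds for all odd $q>5.24\cdot 10^7$, hence by Lemma~\ref{sievepanario} primitive $1$-normal elements exist for those $q$. The finitely many remaining $q\equiv 1\pmod 4$ with $q\le 5.24\cdot 10^7$ are handled by the direct verification described in the text: Inequality~\eqref{sieve} is checked to hold for all but $138$ values of $q$ (the largest exceptional one being $q=21{,}013$), and for each of those $138$ exceptions a primitive $1$-normal element is exhibited by the \emph{Sage} search whose pseudocode appears in Appendix~\ref{sec:pseudocode}. (One should also note that $\gcd(p,4)=1$ forces $q$ odd, so no even $q$ occurs here.)

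For $n=5$, if $q\equiv 2,3\pmod 5$ then $q$ is primitive modulo $5$ and Corollary~\ref{cor:small-n}(3) applies. For $q\equiv \pm1\pmod 5$ I again use Proposition~\ref{prop:n=4andn=5}: Inequality~\eqref{sieve} holds once $q\ge 2^{17}$. For $71<q<2^{17}$ one verifies directly the stronger bound $16\cdot W(q^5-1)\le q^{1.5}$, which implies~\eqref{sieve} since $W(T)\le 16$; and for $2\le q\le 71$ one computes $W(T)$ explicitly ($W(T)=4$ when $q\equiv -1\pmod 5$ and $W(T)=16$ when $q\equiv 1\pmod5$) and checks~\eqref{sieve} directly, finding it holds except for $q=4,9,11,16,31,61,71$, each of which is then settled by a \emph{Sage} search for an explicit primitive $1$-normal element.

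Combining the three degrees gives the corollary. The only genuine obstacle is organizational rather than mathematical: making sure the case analysis is exhaustive (in particular, that the congruence classes mod $4$ and mod $5$ together with the sieve ranges and the finite exception lists cover every prime power $q$ with $\gcd(p,n)=1$), and that the exceptional finite sets really have been verified by the computer search of Appendix~\ref{sec:pseudocode}. Everything else follows from results already in hand --- Lemmas~\ref{lem:nonexistn-1}, \ref{lem:qprimmodn}, \ref{sievepanario}, Corollary~\ref{cor:small-n}, Proposition~\ref{prop:n=4andn=5}, and the divisor-function estimates of the Appendix.
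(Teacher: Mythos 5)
Your proof is correct and follows essentially the same route as the paper: the corollary there is obtained precisely by combining Corollary~\ref{cor:small-n} (the structural cases $n=3$, $n=4$ with $q\equiv 3\pmod 4$, and $n=5$ with $q\equiv 2,3\pmod 5$) with Lemma~\ref{sievepanario}, Proposition~\ref{prop:n=4andn=5}, and the finite direct verifications and \emph{Sage} searches for the listed exceptional $q$. Your case analysis, including the observation that $\gcd(p,4)=1$ forces $q$ odd and the values of $W(T)$ for $n=5$, matches the paper's argument exactly.
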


\section{Existence results for primitive $1$-normality when $p$ divides $n$}\label{sec:pdivn}
In this section, we prove the existence of primitive $1$-normal elements using a variety of methods. In Section~\ref{sec:n=p^2s}, we use an existence result about primitive elements with prescribed trace in order to prove the existence of primitive $1$-normal elements of $\fqn$ over $\fq$ when $n=p^2s$, where $p$ is the characteristic of $\fq$. In Section~\ref{sec:n=ps}, we use the estimations found in Appendix~\ref{sec:estimations} to prove the existence of primitive $1$-normal elements of $\fqn$ over $\fq$ for $n=ps$ with $\gcd(p,s)=1$.

\subsection{The case $n=p^2s$}\label{sec:n=p^2s}
In this section we prove the existence of primitive $1$-normal elements in $\fqn$ over $\fq$ whenever $p^2$ divides $n$, where $p$ is the characteristic of $\fq$. We begin with a crucial lemma from~\cite{Cohentrace}.

\begin{lemma}\label{lem:cohen}\textup{\cite{Cohentrace}}
If $n \geq 3$ and $(q,n) \neq (4,3)$, then for every $a \in \mathbb{F}_{q}$, there exists a primitive element $\alpha \in \mathbb{F}_{q^n}$ such that $\mathrm{Tr}_{q^n/q}(\alpha) = a$. Moreover, if $n=2$ or $(q,n) = (4,3)$, then, for every nonzero $a \in \mathbb{F}_{q}$, there exists a primitive element $\alpha \in \mathbb{F}_{q^n}$ such that $\mathrm{Tr}_{q^n/q}(\alpha) = a$.
\end{lemma}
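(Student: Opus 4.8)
The plan is to run the standard character-sum argument for primitive elements with a prescribed value of the absolute trace, then reduce the pairs $(q,n)$ where it fails by a prime sieve, and finally settle the residual finite list --- together with the genuine exceptions $n=2$ and $(q,n)=(4,3)$ --- by hand or by machine. Fix $a\in\fq$, let $N(a)$ be the number of primitive $\alpha\in\fqn$ with $\Tr_{q^n/q}(\alpha)=a$, and pick any $\alpha_0\in\fqn$ with $\Tr_{q^n/q}(\alpha_0)=a$. Combining the characteristic function $\omega_{q^n-1}$ for primitivity from Theorem~\ref{thm:charfree} with the function $T_{1,a}$ for the trace condition from Section~\ref{sec:charsumbground}, and swapping the order of summation, one obtains
\[
 N(a)=\frac{\theta(q^n-1)}{q}\sum_{c\in\fq}\chi_c(\alpha_0)^{-1}\int\limits_{d\mid q^n-1}G(\eta_{(d)},\chi_c).
\]

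In this sum the term $(d,c)=(1,0)$ contributes $q^n$; every term with $d=1,\ c\neq0$ or with $d\neq1,\ c=0$ vanishes by the Gauss-sum evaluations in Section~\ref{sec:charsumbground}; and each of the remaining terms is a Gauss sum of absolute value $q^{n/2}$, the total weighted count of such terms being $(q-1)(W(q^n-1)-1)$. Hence
\[
 \left|\frac{N(a)}{\theta(q^n-1)}-q^{n-1}\right|\le q^{n/2-1}(q-1)(W(q^n-1)-1)<q^{n/2}W(q^n-1),
\]
so $N(a)>0$ for \emph{every} $a\in\fq$ as soon as $W(q^n-1)<q^{n/2-1}$. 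This disposes of all but finitely many $(q,n)$, uniformly in $a$.

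For the remaining pairs I would apply a prime sieve of Cohen--Huczynska type: fixing a few primes $\ell_1,\dots,\ell_r\mid q^n-1$ and expanding primitivity over partial products, one gets a sharper inequality (a $\delta$-weighted refinement of the one above) that cuts the exceptional set down to an explicit finite list, which is then checked by direct computation. It is exactly here that the true obstructions surface. When $n=2$, an $\alpha\in\F_{q^2}$ with $\Tr(\alpha)=0$ satisfies $\alpha^q=-\alpha$, whence $\alpha^{q-1}=\pm1$ and $\ord(\alpha)\mid 2(q-1)<q^2-1$, so no trace-zero element is primitive; for $(q,n)=(4,3)$ a direct inspection of $\F_{64}$ over $\F_4$ shows the same failure at $a=0$. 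In both families, however, every nonzero $a$ is still attained by a primitive element, which yields the secondary clause of the statement.

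The hard part is not the leading estimate but the sieve bookkeeping together with the finite verification: one must push the sieve far enough that the leftover list of $(q,n)$ is genuinely checkable, track the $a=0$ case carefully enough to single out precisely $(4,3)$ and $n=2$, and confirm that for those exceptional pairs each nonzero trace value is realized. The ``all $a$'' versus ``all nonzero $a$'' dichotomy in the lemma is simply the record of these two exceptional families.
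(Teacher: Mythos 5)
The paper does not prove this lemma at all: it is imported verbatim from Cohen \cite{Cohentrace}, so there is no internal proof to compare against. Your outline is, in substance, the same strategy Cohen's original paper uses (and the same template as Proposition~\ref{propmain} of this paper with $m=1$ and $f=1$): the character-sum identity for $N(a)$ is set up correctly, the Gauss-sum bookkeeping giving
\[
\Bigl|\tfrac{N(a)}{\theta(q^n-1)}-q^{n-1}\Bigr|\le q^{n/2-1}(q-1)\bigl(W(q^n-1)-1\bigr)
\]
is right, and the sufficient condition $W(q^n-1)<q^{n/2-1}$ follows. The $n=2$ obstruction ($\Tr(\alpha)=0$ forces $\alpha^{q-1}=\pm1$, hence $\ord(\alpha)\mid 2(q-1)$) is also correct.

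The gap is that the decisive part of the theorem is asserted rather than carried out. The inequality $W(q^n-1)<q^{n/2-1}$ fails for infinitely many small $(q,n)$ in a way that a single display cannot dismiss, and the entire content of the ``moreover'' clause --- that the \emph{only} failures at $a=0$ are $n=2$ and $(q,n)=(4,3)$, and that every nonzero $a$ is still attained in those cases --- lives exactly in the sieve plus the explicit finite verification that you defer to ``Cohen--Huczynska type'' machinery and ``direct computation.'' Without exhibiting the sieve inequality, the resulting explicit exceptional list, and the checks on that list (including the nonzero-trace claim for $n=2$ for \emph{all} $q$, which is itself an infinite family and needs its own short argument, e.g.\ via the same character sum restricted to $a\neq0$), what you have is a correct proof program for a known theorem, not a proof. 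Given that the paper treats the lemma as a citation, the honest options are either to cite Cohen as the paper does or to actually execute the finite verification you describe.
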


We now give a general result which characterizes $1$-normal elements in $\fqn$ over $\fq$ based on their corresponding $1$-normal projections into the subfield $\F_{q^{ps}}$. 

\begin{lemma}\label{lem:n=p^2s}
Suppose that $\fq$ has characteristic $p$ and let $n=p^2s$ for any $s \geq 1$. Let $x-\zeta$ be a divisor of $x^n-1$, then $\alpha \in \fqn$ has minimal polynomial $m_{\sigma_q, \alpha} = \frac{x^n-1}{x-\zeta}$ if and only if $\beta = \Tr_{q^n/q^{ps}}(\alpha)$ has minimal polynomial $m_{\sigma_q, \beta}(x) = \frac{x^{ps}-1}{x-\zeta}$. 
\end{lemma}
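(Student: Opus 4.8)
\textbf{Proof proposal for Lemma~\ref{lem:n=p^2s}.}

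The plan is to exploit the fact that when $n=p^2s$ we have $x^n-1 = (x^s-1)^{p^2}$ in $\fq[x]$, so that $\frac{x^n-1}{x-\zeta}$ is built from the single linear factor $x-\zeta$ (with $\zeta \in \fq$, in fact $\zeta^s=1$ forces $\zeta = 1$ if $\gcd(s,p)=1$, but in any case $\zeta$ is an $s$-th root of unity in $\fq$) and the fact that $x-\zeta$ appears with full multiplicity $p^2$ in $x^n-1$. The key structural observation is that the conventional $q$-associate of the trace map $\Tr_{q^n/q^{ps}}$ is related to the polynomial $g(x) = \frac{x^n-1}{x^{ps}-1} = (x^s-1)^{p^2-p} = (x^s-1)^{p(p-1)}$; more precisely, writing $\Tr_{q^n/q^{ps}}(\alpha) = g \circ \alpha$ when we view everything through $\sigma_q$ acting on $\fqn$, we can track minimal polynomials under the operation $\alpha \mapsto g\circ\alpha$ using Proposition~\ref{prop:linearized}.

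First I would record that $m_{\sigma_q,\alpha} \mid x^n-1$ and $m_{\sigma_q,\beta}\mid x^{ps}-1$, and that for $\beta = g\circ\alpha$ one has, by the standard theory of modules over $\fq[x]/(x^n-1)$ (or directly from Proposition~\ref{prop:linearized}(3)), the identity
\[
m_{\sigma_q,\,g\circ\alpha}(x) \;=\; \frac{m_{\sigma_q,\alpha}(x)}{\gcd\!\left(m_{\sigma_q,\alpha}(x),\,g(x)\right)}.
\]
Then the forward direction is a direct computation: if $m_{\sigma_q,\alpha} = \frac{x^n-1}{x-\zeta}$, then since $x-\zeta$ divides $x^n-1$ to multiplicity $p^2$ and divides $g(x) = (x^s-1)^{p(p-1)}$ to multiplicity $p(p-1) = p^2-p$, I compute $\gcd\!\left(\frac{x^n-1}{x-\zeta},\, g\right)$ factor-by-factor and obtain $\frac{m_{\sigma_q,\alpha}}{\gcd} = \frac{x^{ps}-1}{x-\zeta}$, as claimed. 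For the converse, I would argue that if $\beta = \Tr_{q^n/q^{ps}}(\alpha)$ has minimal polynomial $\frac{x^{ps}-1}{x-\zeta}$, then $m_{\sigma_q,\alpha}$ must be a divisor $h$ of $x^n-1$ with $\frac{h}{\gcd(h,g)} = \frac{x^{ps}-1}{x-\zeta}$; analyzing the multiplicity of $x-\zeta$ and of every other irreducible factor of $x^s-1$ in $h$, together with the constraint $h \mid (x^s-1)^{p^2}$, forces $h = \frac{x^n-1}{x-\zeta}$. One must be slightly careful that the multiplicities pin down $h$ uniquely and that no smaller $h$ (one vanishing to lower order at $x-\zeta$) could also satisfy the quotient condition — this is where the specific exponent $p^2$ versus $p$ in $n = p^2 s$ versus $ps$ is essential, since it leaves exactly the right "room."

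The main obstacle I anticipate is establishing the quotient formula $m_{\sigma_q,\,g\circ\alpha} = m_{\sigma_q,\alpha}/\gcd(m_{\sigma_q,\alpha},g)$ cleanly and in the generality needed (i.e. not assuming $\alpha$ generates the whole module), and then handling the converse direction's uniqueness argument carefully when $x^s-1$ is not irreducible — one has to check the condition separately at $x-\zeta$ and at each other irreducible factor of $x^s-1$, ruling out the possibility that $h$ picks up extra factors or drops the multiplicity at $x-\zeta$ below $p^2$. A secondary technical point is to confirm that $g \circ \alpha$ genuinely equals $\Tr_{q^n/q^{ps}}(\alpha)$: this follows because $\Tr_{q^n/q^{ps}}$ corresponds to summing over the Galois group of $\fqn/\F_{q^{ps}}$, which is generated by $\sigma_q^{ps}$, giving $\sum_{j=0}^{p^2-1}\sigma_q^{psj}$, whose conventional $q$-associate is $\sum_{j=0}^{p^2-1} x^{psj} = \frac{x^{np^2\!/\!p^2}-1}{x^{ps}-1}\cdot(\text{unit correction})$ — I would just verify this equals $\frac{x^n-1}{x^{ps}-1}$ up to the usual identification, which is immediate in characteristic $p$.
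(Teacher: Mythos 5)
Your argument is correct, but it is organized differently from the paper's. You first establish the general annihilator formula $m_{\sigma_q,\,g\circ\alpha}=m_{\sigma_q,\alpha}/\gcd(m_{\sigma_q,\alpha},g)$ for $g=\frac{x^n-1}{x^{ps}-1}$ (which does hold in the stated generality: $h\circ(g\circ\alpha)=(hg)\circ\alpha$, and $m_{\sigma_q,\alpha}\mid hg$ iff $m_{\sigma_q,\alpha}/\gcd(m_{\sigma_q,\alpha},g)$ divides $h$, by coprimality of the two quotients), and then both directions become multiplicity bookkeeping at each irreducible factor of $x^s-1$. The paper instead argues by bare divisibility: for the forward direction it composes $m_{\sigma_q,\alpha}=\frac{x^n-1}{x^{ps}-1}\cdot\frac{x^{ps}-1}{x-\zeta}$ with transitivity of the trace to sandwich $m_{\sigma_q,\beta}$ between divisibility by and of $\frac{x^{ps}-1}{x-\zeta}$; for the converse it assumes $m_{\sigma_q,\alpha}$ strictly divides $\frac{x^n-1}{x-\zeta}$, strips off one more irreducible factor $f$, and uses $f(x)(x-\zeta)\mid x^{ps}-1$ to contradict the minimality of $m_{\sigma_q,\beta}$. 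Your route buys a reusable formula and a uniform treatment of both directions; the paper's avoids proving the formula at the cost of a slightly more ad hoc contradiction argument. Both exploit the same essential point you identify: $p^2\mid n$ guarantees that $x-\zeta$ still occurs with positive multiplicity in $\frac{x^{ps}-1}{x-\zeta}$, which is what pins the answer down (and is why the statement fails for $n=ps$). Two small corrections: the extension $\fqn/\F_{q^{ps}}$ has degree $p$, not $p^2$, so the trace is $\sum_{j=0}^{p-1}\sigma_q^{psj}$ with conventional $q$-associate $\sum_{j=0}^{p-1}x^{psj}=\frac{x^n-1}{x^{ps}-1}$ (your stated $g$ is nonetheless the correct one, so nothing downstream is affected); and your claim that $x-\zeta$ divides $g$ to multiplicity exactly $p^2-p$ presumes $x-\zeta$ exactly divides $x^s-1$, which fails if $p\mid s$ — the bookkeeping still goes through with the multiplicities scaled, but you should phrase it in terms of the multiplicity of $x-\zeta$ in $x^s-1$ rather than hard-coding $1$.
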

\begin{proof}
We observe that if $x-\zeta$ is a divisor of $x^n-1$, then $x-\zeta$ is a factor of $x^s-1$ and $x^{ps}-1$ as well. Suppose $m_{\sigma_q, \alpha}(x) = \frac{x^n-1}{x-\zeta} = \frac{x^n-1}{x^{ps}-1}\cdot \frac{x^{ps}-1}{x-\zeta}$. Then $0 = m_{\sigma_q, \alpha}\circ\alpha = \frac{x^n-1}{x^{ps}-1} \circ \beta$; hence, $m_{\sigma_q,\beta}$ divides $\frac{x^{ps}-1}{x-\zeta}$. Moreover, $m_{\sigma_q, \alpha}(x)$ clearly divides $m_{\sigma_q, \beta}(x)\frac{x^n-1}{x^{ps}-1}$, so $\frac{x^{ps}-1}{x-\zeta}$ divides $m_{\sigma_q, \beta}$. 

Conversely, suppose that $m_{\sigma_q, \beta}(x) = \frac{x^{ps}-1}{x-\zeta}$. Then $m_{\sigma_q,\alpha}(x)$ divides 
$$\frac{x^n-1}{x^{ps}-1}\cdot \frac{x^{ps}-1}{x-\zeta} = \frac{x^n-1}{x-\zeta}.$$
Suppose that $m_{\sigma_q, \alpha}$ strictly divides $\frac{x^n-1}{x-\zeta}$, then there exists some irreducible $f(x)$ dividing $x^n-1$ such that $m_{\sigma_q,\alpha}$ divides $\frac{x^n-1}{f(x)(x-\zeta)}$. However, $x^n-1 = (x^s-1)^{p^2}$, hence $f$ irreducible and $x-\zeta$ a repeated root of $x^n-1$ implies that $f(x)$ divides $x^s-1$ and $f(x)(x-\zeta)$ divides $x^{ps}-1 = (x^s-1)^p$. Thus,
\[ 
0 = \frac{x^n-1}{f(x)(x-\zeta)}\circ \alpha = \frac{x^{ps}-1}{f(x)(x-\zeta)}\cdot \frac{x^n-1}{x^{ps}-1}\circ \alpha = \frac{x^{ps}-1}{f(x)(x-\zeta)}\circ \beta,\]
contradicting $m_{\sigma_q,\beta}(x) = \frac{x^{ps}-1}{x-\zeta}$. 
\end{proof}

In Lemma~\ref{lem:n=p^2s} we are, of course, primarily concerned with the case $\zeta = 1$, since $x-1$ is a divisor of $x^n-1$ over any field. We now combine Lemmas~\ref{lem:cohen} and~\ref{lem:n=p^2s} to yield the main result of this section. 

\begin{proposition}\label{prop:n=p^2s}
Suppose that $\fq$ has characteristic $p$ and that $n$ is a positive integer divisible by $p^2$. Then there exists a primitive $1$-normal element of $\fqn$ over $\fq$.
\end{proposition}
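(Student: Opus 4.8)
The plan is to combine the two preceding lemmas in the natural way: reduce the existence of a primitive $1$-normal element of $\fqn$ over $\fq$ to the existence of a primitive element of $\fqn$ with a suitably prescribed trace into the subfield $\F_{q^{ps}}$, and then invoke Lemma~\ref{lem:cohen} for the intermediate extension $\F_{q^{ps}}/\F_{q^s}$. Write $n = p^2 s$ and set $L = \F_{q^{ps}}$, so $\fqn/L$ is an extension of degree $p$ and $L/\F_{q^s}$ is an extension of degree $p$ as well. The key observation is that, by Lemma~\ref{lem:n=p^2s} with $\zeta = 1$, an element $\alpha \in \fqn$ has minimal polynomial $m_{\sigma_q,\alpha}(x) = \frac{x^n-1}{x-1}$ (and hence, by Corollary~\ref{cor:extreme-k} and the discussion following Theorem~\ref{thm:charfree}, is $1$-normal over $\fq$) precisely when $\beta = \Tr_{q^n/q^{ps}}(\alpha)$ has minimal polynomial $m_{\sigma_q,\beta}(x) = \frac{x^{ps}-1}{x-1}$, i.e. precisely when $\beta$ is $1$-normal in $L$ over $\fq$.

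So the first step is to produce the ``target'' element $\beta \in L$: I would take any $1$-normal element $\beta$ of $L = \F_{q^{ps}}$ over $\fq$ whose minimal polynomial is exactly $\frac{x^{ps}-1}{x-1}$ (such $\beta$ exists for $ps \geq 2$ by the paragraph following Corollary~\ref{cor:extreme-k}, since $\frac{x^{ps}-1}{x-1}$ is a monic divisor of $x^{ps}-1$ of degree $ps-1$). Note that $\frac{x^{ps}-1}{x-1}$ annihilating $\beta$ forces $\Tr_{q^{ps}/q}(\beta) = (1 + x + \cdots + x^{ps-1})\circ\beta = 0$; but, more to the point for the next step, I need the value $a := \Tr_{q^{ps}/q^s}(\beta) \in \F_{q^s}$. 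The second step is to apply Lemma~\ref{lem:cohen} to the extension $\fqn/L$, i.e. to $\F_{(q^{ps})^p}$ over $\F_{q^{ps}}$ with degree $p \geq 2$: I want a primitive element $\alpha \in \fqn$ with $\Tr_{q^n/q^{ps}}(\alpha) = \beta$. Then, by Lemma~\ref{lem:n=p^2s}, $\alpha$ is the desired primitive $1$-normal element of $\fqn$ over $\fq$.

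The subtlety is that Lemma~\ref{lem:cohen} does not unconditionally give a primitive element with \emph{any} prescribed trace: when the degree is $2$, or in the single exceptional case $(4,3)$, one is only guaranteed a primitive element with prescribed \emph{nonzero} trace. Here the degree of $\fqn/L$ is $p$; if $p \geq 3$ (and we are never in the $(4,3)$ exception, since the base field of that extension is $\F_{q^{ps}}$, which has size a prime power that is a perfect $p$-th power, not $4$, unless $q = 2, ps = 2$, which forces $p = 2$), Lemma~\ref{lem:cohen} applies directly for the prescribed trace $\beta$. The genuinely delicate case is $p = 2$: then $\fqn/L$ has degree $2$, and Lemma~\ref{lem:cohen} only yields a primitive $\alpha$ with $\Tr_{q^n/q^{ps}}(\alpha) = \beta$ when $\beta \neq 0$ — so I must ensure I can choose the $1$-normal element $\beta$ of $L$ to be nonzero, which is automatic since $0$ is not $1$-normal (its minimal polynomial is $x-1$, not $\frac{x^{ps}-1}{x-1}$, as $ps \geq 4$ here). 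Thus the nonzero hypothesis of Lemma~\ref{lem:cohen} is satisfied in all cases, and the only remaining bookkeeping is to confirm the small-parameter edge cases ($s = 1$, smallest $q$) either fall under the lemma or can be checked directly; I expect this edge-case verification for $p = 2$, $s = 1$ (so $n = 4$, and one checks $\F_{q^4}/\F_{q^2}$ with $\beta \in \F_{q^2}^*$) to be the main — though minor — obstacle, and it is handled by the nonzero-trace clause of Lemma~\ref{lem:cohen} together with the observation that $\frac{x^{q^2}-1}{x-1}$-normal elements of $\F_{q^2}$ are nonzero.
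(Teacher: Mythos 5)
Your proposal is correct and follows essentially the same route as the paper: produce a nonzero $\beta \in \F_{q^{ps}}$ with $m_{\sigma_q,\beta} = \frac{x^{ps}-1}{x-1}$, lift it via the nonzero-trace clause of Lemma~\ref{lem:cohen} applied to the degree-$p$ extension $\fqn/\F_{q^{ps}}$, and conclude by Lemma~\ref{lem:n=p^2s}. Your extra bookkeeping around the $p=2$ and $(4,3)$ edge cases is exactly the point the paper dispatches with the remark that such a $\beta$ is necessarily nonzero.
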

\begin{proof}
Let $n= p^2s = p\cdot ps$. Let $\beta$ be an element with minimal polynomial $\frac{x^{ps}-1}{x-1}\ne 1$. In particular, such a $\beta$ is nonzero. Since $p = n/ps \geq 2$, by Lemma~\ref{lem:cohen}, there is a primitive element $\alpha \in \fqn$ with $\Tr_{q^n/q^{ps}}(\alpha)=\beta$. Thus, from Lemma~\ref{lem:n=p^2s}, such an $\alpha$ is primitive $1$-normal.
\end{proof}

\subsection{The case $n=ps$ with $\gcd(s,p)=1$}\label{sec:n=ps}
Let $n=ps$ with $\gcd(p,s) = 1$. Then $x^n-1 = (x^s-1)^p$. Moreover, observe that $\left(\frac{x^s-1}{x-1}\right)^p \circ \alpha = \frac{x^n-1}{x^p-1} = \Tr_{q^n/q^p}(\alpha)$. 

\begin{proposition}\label{characterization}
Let $T(x) = \frac{x^s-1}{x-1}$; then $\alpha \in \fqn$ has minimal polynomial $\frac{x^n-1}{x-1}$ if and only if it is $T$-free and $\Tr_{q^n/q^p}(\alpha)$ has minimal polynomial $\frac{x^p-1}{x-1}$. 
\end{proposition}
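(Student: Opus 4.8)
The plan is to reduce the statement to a divisor computation on $x^n-1$, paralleling Lemma~\ref{lem:n=p^2s}. Since $\gcd(s,p)=1$ the polynomial $x^s-1$ is separable, so $T(x)=\frac{x^s-1}{x-1}$ is squarefree and coprime to $x-1$; raising to the $p$-th power and using $x^p-1=(x-1)^p$ in characteristic $p$, we get $x^n-1=(x-1)^pT(x)^p$, hence $\frac{x^n-1}{x-1}=(x-1)^{p-1}T(x)^p$ and $\frac{x^p-1}{x-1}=(x-1)^{p-1}$. The annihilator of $\alpha$ under $\sigma_q$ is the ideal generated by $m_{\sigma_q,\alpha}$, a monic divisor of $x^n-1$; since $(x-1)^p$ and $T(x)^p$ are coprime, we may write $m_{\sigma_q,\alpha}=(x-1)^a\,h(x)$ for a unique $0\le a\le p$ and a unique monic $h\mid T(x)^p$. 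The proof then amounts to showing that $T$-freeness forces $h=T(x)^p$ and that the trace condition forces $a=p-1$, which together say exactly that $m_{\sigma_q,\alpha}=\frac{x^n-1}{x-1}$.

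For the first equivalence, by Proposition~\ref{prop:free} the element $\alpha$ is $T$-free if and only if $\gcd\bigl(T,\tfrac{x^n-1}{m_{\sigma_q,\alpha}}\bigr)=1$. Substituting the factorizations gives $\tfrac{x^n-1}{m_{\sigma_q,\alpha}}=(x-1)^{p-a}\cdot\tfrac{T^p}{h}$, and since $\gcd(T,x-1)=1$ this gcd equals $\gcd(T,T^p/h)$. Writing $T=\prod_i f_i$ as a product of distinct monic irreducibles and $h=\prod_i f_i^{b_i}$ with $0\le b_i\le p$, we get $\gcd(T,T^p/h)=\prod_{b_i<p}f_i$, which is $1$ precisely when every $b_i=p$, i.e.\ $h=T(x)^p$. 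Thus $\alpha$ is $T$-free if and only if $T(x)^p\mid m_{\sigma_q,\alpha}$.

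For the second equivalence, recall from the computation just before the statement that $\beta:=\Tr_{q^n/q^p}(\alpha)=T(x)^p\circ\alpha$, since $\tfrac{x^n-1}{x^p-1}=T(x)^p$. By Proposition~\ref{prop:linearized}, $m\circ\beta=(m\,T^p)\circ\alpha$ for every $m\in\fq[x]$, so $m\circ\beta=0$ if and only if $m_{\sigma_q,\alpha}=(x-1)^a h$ divides $m\,T^p$. As $\beta\in\F_{q^p}$, its minimal polynomial divides $x^p-1=(x-1)^p$ and hence has the form $(x-1)^c$; testing $m=(x-1)^c$ and using $h\mid T^p$ together with $\gcd(h,x-1)=1$, the divisibility $(x-1)^a h\mid (x-1)^c T^p$ is equivalent to $a\le c$. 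Therefore $m_{\sigma_q,\beta}=(x-1)^a$, and in particular $m_{\sigma_q,\beta}=(x-1)^{p-1}=\frac{x^p-1}{x-1}$ if and only if $a=p-1$.

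Combining the two equivalences closes the argument: $m_{\sigma_q,\alpha}=(x-1)^{p-1}T(x)^p=\frac{x^n-1}{x-1}$ if and only if $a=p-1$ and $h=T(x)^p$, if and only if $\alpha$ is $T$-free and $\Tr_{q^n/q^p}(\alpha)$ has minimal polynomial $\frac{x^p-1}{x-1}$. There is no substantial obstacle here; the only points needing care are the separability of $x^s-1$ (which is what legitimizes the coprime splitting $m_{\sigma_q,\alpha}=(x-1)^a h$) and the repeated use of $\gcd(T,x-1)=1$ to simplify the divisibility conditions.
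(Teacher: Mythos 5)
Your proof is correct and follows essentially the same route as the paper's: decompose $m_{\sigma_q,\alpha}$ into its $(x-1)$-part and its $T^p$-part, use Proposition~\ref{prop:free} to show $T$-freeness pins the latter to $T(x)^p$, and use $m\circ\beta=(mT^p)\circ\alpha$ to show $m_{\sigma_q,\beta}=(x-1)^a$ pins the former to $(x-1)^{p-1}$. The only difference is that you carry out both directions explicitly via the two independent equivalences, whereas the paper dismisses the converse as straightforward.
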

\begin{proof}
Suppose $\alpha$ is $T$-free as in the hypothesis and $\beta = \Tr_{q^n/q^p}(\alpha)$ has minimal polynomial $m_{\sigma, \beta}(x) = \frac{x^p-1}{x-1}$. Then,
\[ \frac{x^n-1}{x-1}\circ \alpha = \frac{x^{ps}-1}{x-1}\circ \alpha = \Tr_{q^n/q}(\alpha) = \Tr_{q^p/q}(\Tr_{q^n/q^p}(\alpha)) = \Tr_{q^p/q}(\beta) = 0,\]
since $\beta$ is annihilated by $\frac{x^p-1}{x-1}$. Since $\alpha$ is $T$-free, $\gcd(T, \frac{x^n-1}{m_{\sigma,\alpha}(x)}) = 1$; that is, $m_{\sigma, \alpha}(x) = (x-1)^d \frac{x^n-1}{x^p-1}$ for some $0 \leq d \leq p$. The minimality of $\beta$ gives that $d = p-1$; hence $m_{\sigma,\alpha}(x) = \frac{x^n-1}{x-1}$. 

The reverse assertion is straightforward from the minimality of $\frac{x^n-1}{x-1}$. 
\end{proof}

The above proposition allows us to count a certain subset of $1$-normal elements. We can use similar methods as Lenstra-Schoof to show existence. We start with a version of Lemma \ref{sievepanario} in the case when $p$ divides $n$.

\begin{lemma}\label{sievep}
Suppose that $q$ is a power of a prime $p$ and $n=ps$, where $s\ge 2$ is a positive integer not divisible by $p$. Set $T(x)=\frac{x^s-1}{x-1}$. If the pair $(q, s)$ satisfies
\begin{equation}\label{s>2}W(T)\cdot W(q^{ps}-1)\le q^{p(\frac{s}{2}-1)}, \end{equation}
then there exist $1$-normal elements of $\F_{q^n}$ over $\F_q$.
\end{lemma}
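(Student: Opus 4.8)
The plan is to mimic the character-sum argument behind Lemma~\ref{sievepanario}, but now routed through Proposition~\ref{characterization} and the prescribed-trace count from Proposition~\ref{propmain}. First I would set $n = ps$ with $\gcd(p,s)=1$ and $T(x) = \frac{x^s-1}{x-1}$, and recall from Proposition~\ref{characterization} that an element $\alpha \in \F_{q^n}$ has minimal polynomial $\frac{x^n-1}{x-1}$ (and hence is $1$-normal, being trace-$0$ and not $0$-normal by Corollary~\ref{cor:extreme-k}) precisely when $\alpha$ is $T$-free and $\beta = \Tr_{q^n/q^p}(\alpha)$ has minimal polynomial $\frac{x^p-1}{x-1}$. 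So it suffices to produce an $\alpha$ that is primitive, $T$-free over $\F_q$, and satisfies $\Tr_{q^n/q^p}(\alpha) = \beta$ for some fixed $\beta \in \F_{q^p}$ whose minimal polynomial under $\sigma_q$ is $\frac{x^p-1}{x-1}$. Since $p$ is the characteristic, $m = p$ is a power of $p$, so Proposition~\ref{propmain} applies directly with $f = T$: the number $N$ of such elements satisfies
\[
\frac{N}{\theta(q^n-1)\Theta(T)} > q^{n-p} - q^{n/2} W(q^n-1) W(T).
\]

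Next I would note that the right-hand side is positive exactly when $q^{n/2-p} > W(q^n-1)W(T)$, i.e.\ when $W(T)W(q^n-1) < q^{p(s/2-1)}$, which is implied by the hypothesis~\eqref{s>2} (the ``$\le$'' versus ``$<$'' is harmless: if equality holds the strict inequality $q^{n-p} - q^{n/2}W(q^n-1)W(T) \geq 0$ together with the strictness in~\eqref{eqn:main1} still forces $N > 0$; alternatively one checks that~\eqref{s>2} is never an equality for the relevant parameters). Since $\theta(q^n-1) > 0$ and $\Theta(T) > 0$, we conclude $N \geq 1$, so a primitive element $\alpha$ with the required $T$-freeness and prescribed subfield trace exists, and by Proposition~\ref{characterization} it is $1$-normal. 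One small point to dispatch: we must check that $\beta$ with $m_{\sigma_q,\beta}(x) = \frac{x^p-1}{x-1} = 1 + x + \cdots + x^{p-1}$ actually exists in $\F_{q^p}$ and is nonzero — this is immediate from the theorem in Section~\ref{sec:background} guaranteeing a $k$-normal element for every monic divisor of $x^p-1$, and $\beta \neq 0$ since $m_{\sigma_q,\beta} \neq (x-1)$ (here $s \geq 2$ so $\frac{x^p-1}{x-1}$ has degree $p-1 \geq 1$, and in fact we only need $p \geq 2$).

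I do not expect a genuine obstacle here — the lemma is essentially a bookkeeping consequence of Proposition~\ref{propmain} specialized to $f = T$ and $m = p$, combined with the minimal-polynomial characterization in Proposition~\ref{characterization}. The only mild subtlety is making sure the prescribed-trace target $\beta$ lies in $\F_{q^p}$ and has the correct $\sigma_q$-minimal polynomial (so that the hypothesis of Proposition~\ref{characterization} is met), and tracking that the inequality~\eqref{eqn:main1} is strict so that the weak inequality~\eqref{s>2} still yields $N \geq 1$. The heavier lifting — verifying~\eqref{s>2} for all but finitely many pairs $(q,s)$ and handling the exceptions by computer search — is deferred to the subsequent estimations in Appendix~\ref{sec:estimations} and is not part of this lemma.
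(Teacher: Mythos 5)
Your proposal is correct and follows essentially the same route as the paper: apply Proposition~\ref{propmain} with $m=p$ and $f=T$ to count primitive, $T$-free elements with prescribed $\F_{q^p}$-trace equal to a $\beta$ whose $\sigma_q$-minimal polynomial is $\frac{x^p-1}{x-1}$, then invoke Proposition~\ref{characterization} to conclude $1$-normality. The paper phrases it as a contradiction (if $N=0$ then $W(T)W(q^{ps}-1)>q^{p(s/2-1)}$, violating \eqref{s>2}), which handles the weak inequality exactly as your strictness remark does, so the two arguments are the same in substance.
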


\begin{proof}
Combining Propositions~\ref{propmain} and~\ref{characterization} with $n=ps$, $m=p$ and $f(x)=T(x)=\frac{x^s-1}{x-1}$, if there is no primitive $1$-normal element in $\F_{q^n}$, then
$$0=\frac{N}{\theta(q^n-1)\Theta(T)}>q^{p(s-1)}-q^{\frac{ps}{2}}W(q^{ps}-1)W(T),$$
and hence
$$ q^{p(\frac{s}{2}-1)}< W(q^{ps}-1)W(T).$$
This is a contradiction with our assumptions.
\end{proof}

We note that this criterion does not include the case $s=1$ and, for $s=2$, is worthless. Using different combinatorial arguments we will see how to obtain the desired result for $s=1, 2$. First, we deal with the cases $s\ge 3$.

\subsubsection{Cases $n=ps$ with $3\le s\le 5$.}

\begin{proposition}
Suppose that $q$ is a power of a prime $p$. For $n=ps$, where either $s=3$ and $p\ge 29$ or $s=4,5$ and $p\ge 11$, the pair $(q, s)$ satisfies Inequality \eqref{s>2}.
\end{proposition}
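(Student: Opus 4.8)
The plan is to verify Inequality~\eqref{s>2}, namely $W(T)\cdot W(q^{ps}-1)\le q^{p(s/2-1)}$, separately for the three values $s=3,4,5$, in each case reducing the problem to an estimate on $W(q^{ps}-1)$ that is controlled by the lemmas in Appendix~\ref{sec:estimations}. First I would record the (small, explicit) value of $W(T)=W\left(\frac{x^s-1}{x-1}\right)$ over $\F_q$: since $\deg T = s-1$ and the number of monic divisors of a polynomial is maximized when it splits completely, we have $W(T)\le 2^{s-1}$, so $W(T)\le 4$ for $s=3$ and $W(T)\le 8$ for $s=4$, while for $s=5$ one gets $W(T)\le 16$. (One can do slightly better by noting the actual factorization type of $\frac{x^s-1}{x-1}$ over $\F_q$ depends only on $q\bmod s$, but the crude bound should already suffice for large $p$.) Substituting these into \eqref{s>2}, it remains to show respectively $4\,W(q^{3p}-1)\le q^{p/2}$, $8\,W(q^{4p}-1)\le q^{p}$, and $16\,W(q^{5p}-1)\le q^{3p/2}$.

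The next step is to bound $W(q^{ps}-1)=2^{\omega(q^{ps}-1)}$ using the standard estimate that for any $N$, $\omega(N)$ — hence $W(N)$ — grows much more slowly than any fixed positive power of $N$; concretely, the appendix (via the divisor-bound lemmas analogous to Lemma~\ref{divisor} and the $W$-estimates used in Lemma~\ref{aux3} and Proposition~\ref{prop:n=4andn=5}) gives bounds of the shape $W(q^{ps}-1)\le C\cdot q^{ps\cdot\varepsilon}$ or a ratio-type bound $\frac{W(q^{ps}-1)}{q^{ps/\text{(something)}}}\le$ const. The key point is that the right-hand sides above are $q^{p/2}$, $q^{p}$, $q^{3p/2}$, i.e.\ they carry a factor $q^{p}$ up to a bounded power, and $W(q^{ps}-1)$ is of size $q^{o(p)}$ as $p\to\infty$ with $q$ a prime power; so for each fixed $s\in\{3,4,5\}$ the inequality holds once $p$ is large enough. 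The numerical thresholds $p\ge 29$ for $s=3$ and $p\ge 11$ for $s=4,5$ are then extracted by plugging the explicit constants from the appendix lemmas into these three inequalities and solving — this is the routine-calculation part that I would relegate to citing the appendix.

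The main obstacle is getting the thresholds as small as $p\ge 29$ (for $s=3$) and $p\ge 11$ (for $s=4,5$) rather than some much larger bound: the crude estimate $\omega(N)=O(\log N/\log\log N)$ with generic constants would only give astronomically large thresholds, so the proof must exploit the sharper, small-range divisor/$W$ estimates in the appendix — of the same flavour as those invoked in Proposition~\ref{prop:n=4andn=5} (e.g.\ Lemma~\ref{divisor}, Lemma~\ref{estimation5}) — together with the observation that $q^{ps}-1$ is divisible by several small primes (e.g.\ by $q-1$, by $q^s-1$, and by $q^p-1$), which improves the ratio $W(q^{ps}-1)/q^{ps/2}$. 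In particular, for $s=3$ one should use that $q^{3p}-1$ is divisible by a fixed amount, so that a bound like $W(T)W(q^{3p}-1)\le d(q^{3p}-1)$ applies and then Lemma~\ref{divisor}-type estimates finish it, exactly as in the $n=4$ case of Proposition~\ref{prop:n=4andn=5}. I would carry out $s=3$ first (it has the weakest exponent $q^{p/2}$ on the right and hence is the tightest), then $s=4,5$ which are easier because the exponents $q^p$ and $q^{3p/2}$ give more room.
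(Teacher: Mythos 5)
Your setup is right ($W(T)\le 2^{s-1}$, reduce to bounding $W(q^{ps}-1)$ against $q^{p(s/2-1)}$), but the engine you propose for the hard case $s=3$ --- the generic divisor bound of Lemma~\ref{divisor}, ``exactly as in the $n=4$ case of Proposition~\ref{prop:n=4andn=5}'' --- does not reach the stated threshold $p\ge 29$. Concretely, $d(m)\le m^{1.066/\log\log m}$ applied to $m=q^{3p}-1$ requires $\tfrac{3.198\,p}{\log\log(q^{3p})}\le \tfrac{p}{2}$, i.e.\ $3p\log q\gtrsim e^{6.4}\approx 600$; at $q=p=29$ one has $3\cdot 29\cdot\log 29\approx 293$, so the inequality fails, and this route only kicks in around $p\ge 53$ for $q=p$. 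So the ``routine calculation'' you defer to the appendix cannot be extracted from Lemma~\ref{divisor} alone for $s=3$.

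The missing idea is the specific arithmetic input of Lemma~\ref{numthe}: every prime divisor of $\frac{q^{ps}-1}{q^s-1}$ is $\equiv 1\pmod{2p}$, hence is at least $2p+1$, so there can be only few of them; this is what yields Proposition~\ref{mainestimation}'s bound $W\bigl(\frac{q^{ps}-1}{q^s-1}\bigr)\le q^{\frac{s(p-1)}{2+\log_2 p}}$. The paper then splits $W(q^{ps}-1)\le W(q^s-1)\,W\bigl(\frac{q^{ps}-1}{q^s-1}\bigr)$, uses the trivial $W(q^s-1)\le 2q^{s/2}$ on the small factor, and reduces \eqref{s>2} to $p(\tfrac{s}{2}-1)\ge \frac{s(p-1)}{2+\log_2 p}+\tfrac{s}{2}+s\log_p 2$, which holds exactly in the stated ranges. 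Note also that your heuristic is inverted: you argue that $q^{ps}-1$ being ``divisible by several small primes'' improves the bound on $W$, whereas divisibility by many small primes makes $W$ larger; what actually helps is that the \emph{quotient} $\frac{q^{ps}-1}{q^s-1}$ has only \emph{large} prime divisors. (For $s=4,5$ at $p\ge 11$ your divisor-bound route is marginally viable numerically, but the $s=3$, $29\le p\lesssim 50$ range genuinely requires the congruence argument.)
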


\begin{proof}
Notice that $T(x)=\frac{x^s-1}{x-1}$ has degree $s-1$, hence $W(T)\le 2^{s-1} = \frac{q^{s\log_q 2}}{2}$.
According to Proposition \ref{mainestimation}, for $p\ge 11$ we have
$$W\left(\frac{q^{ps}-1}{q^{s}-1}\right)< q^{\frac{s(p-1)}{2+\log_2 p}}.$$
We have the trivial bound $W(q^s-1)\le d(q^s-1)\le 2q^{\frac{s}{2}}$ and then
$$W(q^{ps}-1)W(T)<q^{\frac{s(p-1)}{2+\log_2 p}+\frac{s}{2}+s\log_q 2}\le q^{\frac{s(p-1)}{2+\log_2 p}+\frac{s}{2}+s\log_p 2},$$
since $q\ge p$.
Therefore, if
$$p\left(\frac{s}{2}-1\right)\ge \frac{s(p-1)}{2+\log_2 p}+\frac{s}{2}+s\cdot \frac{\log 2}{\log p},$$
then $(q, s)$ satisfies Inequality \eqref{s>2}. An easy calculation shows that the latter holds if either $s=3$ and $p\ge 29$ or $s=4,5$ and $p\ge 11$.
\end{proof}

When $s=3,4,5$ and for small values of $p$, our methods for studying inequality \eqref{s>2} are too coarse. We now use a different generic bound for $W(q^n-1)$ that is good for large $q$. 

\begin{proposition}\label{prop:table}
Let $q$ be a power of a prime $p$, $n=ps$, where $s=3, 4, 5$ and $\gcd(p, s)=1$. Additionally, suppose that $p\le 23$ if $s=3$ and $p\le 7$ if $s=4,5$. There exists a constant $C_{p, s}$ such that for any $q=p^t$ with $t>C_{p, s}$, the pair $(q, s)$ satisfies Inequality \eqref{s>2}. Moreover, given $p$ and $s$, their corresponding explicit constant $C_{p, s}$ which is an upper bound for which Inequality~\eqref{s>2} may not hold is given in the following table.
\begin{center}
 \begin{tabular}{|c|c| c| c| c| c| c| c| c|c|} 
 \hline
  $p$ & $2$  & $3$ & $5$ & $7$ & $11$ & $13$ & $17$ &  $19$ &$23$\\ [0.5ex] 
 \hline \hline
 $s=3$& $162$ & -- & $28$ & $16$& $8$ & $6$ & $4$ & $4$& $3$ \\ 
 \hline
 $s=4$ & -- & $8$ & $3$ & $2$& -- & -- & -- & --& --\\ 
 \hline
 $s=5$ &  $10$& $4 $& -- & $1$& -- & -- & -- & --& -- \\ 
 \hline
\end{tabular}
\end{center}
\end{proposition}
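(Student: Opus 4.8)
The plan is to prove the table entries by the same mechanism used in the preceding propositions, namely to reduce Inequality~\eqref{s>2} to a single inequality involving only $q$ (equivalently only $t$, since $q=p^t$) and then to locate the threshold exponent. First I would bound the left-hand side: write $n=ps$ and split $W(q^{ps}-1) \leq W(q^s-1)\cdot W\!\left(\frac{q^{ps}-1}{q^s-1}\right)$. For the first factor use the trivial bound $W(q^s-1)\le d(q^s-1)$, and for the second factor use a generic bound of the shape $W\!\left(\frac{q^{ps}-1}{q^s-1}\right) \le C \cdot q^{\epsilon s}$ coming from the estimations in Appendix~\ref{sec:estimations} (the same ``$d(N)\le N^{c/\log\log N}$''-type bound invoked via Lemma~\ref{divisor} in Proposition~\ref{prop:n=4andn=5}, now applied with $N=q^{ps}-1$ or with $N$ the relevant cofactor). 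Together with $W(T)\le 2^{s-1}$, this turns \eqref{s>2} into an inequality of the form
\[
q^{\,p(\frac{s}{2}-1)} \;\ge\; 2^{s-1}\cdot d(q^s-1)\cdot C\cdot q^{\epsilon s},
\]
and since $p,s$ are now fixed small integers this is a concrete inequality in the single variable $q$.

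The second step is to solve that inequality for $q$, or rather for $t$. For each of the nine admissible pairs $(p,s)$ appearing in the table ($s=3$ with $p\le 23$, and $s=4,5$ with $p\le 7$), the exponent $p(\frac{s}{2}-1)$ on the left is a fixed positive number (for $s=3$ it is $p/2$, for $s=4$ it is $p$, for $s=5$ it is $3p/2$), while the right-hand side grows like $q^{o(1)}$ in $q$ because $d(q^s-1)=q^{o(1)}$; hence the inequality holds for all sufficiently large $q$. The constant $C_{p,s}$ is obtained by taking logarithms base $p$: writing $q=p^t$ and using $\log d(q^s-1) \le \frac{c\,s\,t\log p}{\log(st\log p)}$ (Lemma~\ref{divisor}), one gets a transcendental inequality in $t$ of the form $A_{p,s}\,t \ge B_{p,s} + \frac{c s t\log p}{\log(s t \log p + O(1))}$ which can be solved numerically; the least integer $t$ beyond which it holds is exactly the tabulated $C_{p,s}$. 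The absence of an entry for $(p,s)=(3,3)$ is consistent with the hypothesis $\gcd(p,s)=1$, so that column is simply vacuous.

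I expect the bookkeeping around which generic divisor bound to feed in, and how tight it needs to be, to be the only delicate point: if one uses too crude a bound for $W\!\left(\frac{q^{ps}-1}{q^s-1}\right)$ the threshold $C_{p,s}$ blows up (this is presumably why the ``coarse'' bound of the previous proposition was abandoned for small $p$), whereas the finer Lemma~\ref{divisor}-type estimate on the full $q^{ps}-1$ gives the sharp constants in the table. The proof itself then reads: ``For each of the nine pairs $(p,s)$, substituting the bounds of Appendix~\ref{sec:estimations} into \eqref{s>2} and solving for $t$ yields that \eqref{s>2} holds whenever $t>C_{p,s}$, with $C_{p,s}$ as in the table; the computation is routine and was carried out directly.'' The remaining finitely many pairs $(q,s)$ with $t\le C_{p,s}$ (together with the $s=1,2$ cases deferred to later subsections) are then to be handled by the direct verification of \eqref{s>2} where possible and by explicit computer search otherwise, exactly as in Section~\ref{sec:coprime}; the pseudocode is in Appendix~\ref{sec:pseudocode}.
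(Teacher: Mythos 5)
Your closing paragraph lands on exactly the paper's argument: bound $W(T)\le 2^{s-1}=\frac{1}{2}q^{s\log_q 2}$ and bound $W(q^{ps}-1)\le d(q^{ps}-1)<q^{1.066ps/\log\log(q^{ps}-1)}$ by Lemma~\ref{divisor} applied to the \emph{whole} of $q^{ps}-1$, so that Inequality~\eqref{s>2} follows from $\frac{\log 2}{pt\log p}+\frac{1.066}{\log\log(p^{pts}-1)}<\frac{1}{2}-\frac{1}{s}$; the left side is decreasing in $t$, so the largest integer $t$ violating it is well defined and is the tabulated $C_{p,s}$, found by direct computation, with $t\le C_{p,s}$ deferred to direct verification and search. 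The issue is your first displayed reduction, which is not the one that works: if you split $W(q^{ps}-1)\le W(q^s-1)W\bigl(\tfrac{q^{ps}-1}{q^s-1}\bigr)$ and bound the cofactor by $Cq^{\epsilon s}$ with a \emph{fixed} exponent $\epsilon$ (e.g.\ $\epsilon=\frac{p-1}{2+\log_2 p}$ from Proposition~\ref{mainestimation}), then for the small $p$ in this table one has $\epsilon s > p(\frac{s}{2}-1)$ — for instance $s=3$, $p=5$ gives $3\epsilon\approx 2.78>2.5$ — so the right-hand side of your inequality outgrows the left and no threshold $C_{p,s}$ exists; this is precisely why these pairs were excluded from the preceding proposition. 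You do diagnose this yourself and switch to the $d(q^{ps}-1)$ bound at the end, so the proof is salvageable as written, but the argument should be committed to that route from the start (and the monotonicity-in-$t$ of the resulting inequality should be stated explicitly, since it is what guarantees that failure of \eqref{s>2} is confined to $t\le C_{p,s}$ rather than to a scattered set of exponents).
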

\begin{proof}
From Lemma \ref{divisor}, we obtain $d(q^{ps}-1)<q^{\frac{1.066ps}{\log\log (q^{ps}-1)}}$ and clearly we have $W(q^{ps}-1)\le d(q^{ps}-1)$. Recall that $W(T)\le 2^{s-1}\le \frac{q^{s\log_q 2}}{2}$ and then
$$W(T)W(q^{ps}-1)<q^{\frac{1.066ps}{\log\log (q^{ps}-1)}+s\log_q2}.$$
Hence, if $$s \log_q 2+ \frac{1.066ps}{\log\log (q^{ps}-1)}< p\left(\frac{s}{2}-1 \right),$$ 
Inequality \eqref{s>2} holds. But, writing $q=p^t$, the last inequality is equivalent to
\begin{equation}\label{C_ps}\frac{\log 2}{pt\log p}+\frac{1.066}{\log\log (p^{pts}-1)}< \frac{1}{2}-\frac{1}{s}.\end{equation}

It is not hard to see that the left side of Inequality \eqref{C_ps} is a decreasing function in $t$. In particular, for fixed $p$ and $s$, if $t_0=C_{p, s}$ is the greatest positive integer value of $t$ such that the Inequality \eqref{C_ps} is false, then for any $q=p^t$ with $t> C_{p, s}$, the pair $(q, s)$ satisfies Inequality \eqref{s>2}. By a direct calculation, we find the integers $C_{p, s}$.
\end{proof}

\begin{remark}
For the values of $(q = p^t,s)$ which are not covered by Inequality~\eqref{C_ps}, we use Cunningham tables~\cite{Cunningham} to obtain the factorization of $q^{ps}-1$ and find the following genuine exceptions to Inequality~\eqref{s>2}. 
\begin{center}
\begin{tabular}{c||c|c|c}
& $p=2$ & $p=3$ & $p=5$ \\
\hline
$s=3$ & $t=1,2,3,4,5,6,7,8,10,12$ & & $t=1,2$ \\
$s=4$ & & $t=1,2$ & \\
$s=5$ & $t=1,2,3$ & $t=1$ & 
\end{tabular}
\end{center}
For these remaining cases, we find a primitive $1$-normal element by direct search; see Appendix~\ref{sec:pseudocode}.

\end{remark}

\begin{corollary}\label{case2<s<6}
Suppose that $q$ is a power of a prime $p$, let $n=ps$ with $s=3,4,5$ and $\gcd(p, s)=1$, then there exists a primitive $1$-normal element in $\fqn$ over $\fq$. 
\end{corollary}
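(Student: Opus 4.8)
\textbf{Proof plan for Corollary~\ref{case2<s<6}.}

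The plan is to assemble the corollary from the three results immediately preceding it in the section. The statement asks for a primitive $1$-normal element of $\F_{q^n}$ over $\fq$ whenever $n = ps$ with $s \in \{3,4,5\}$ and $\gcd(p,s) = 1$. By Proposition~\ref{characterization}, it suffices to exhibit an element $\alpha \in \fqn$ that is $T$-free (for $T(x) = \frac{x^s-1}{x-1}$), primitive, and whose projection $\Tr_{q^n/q^p}(\alpha)$ has minimal polynomial $\frac{x^p-1}{x-1}$; such an element has minimal polynomial $\frac{x^n-1}{x-1}$ and hence (by Corollary~\ref{cor:extreme-k}, since its trace is $0$) is precisely $1$-normal. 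This is the characterization that Lemma~\ref{sievep} and Proposition~\ref{propmain} are built to handle via the character-sum estimate $\frac{N}{\theta(q^n-1)\Theta(T)} > q^{p(s/2-1)} - W(q^{ps}-1)W(T)$.

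First I would split on the size of $p$. For the "large $p$" range --- $s=3$ with $p \geq 29$, or $s=4,5$ with $p \geq 11$ --- the first proposition of Subsection~\ref{sec:n=ps} (the one invoking Proposition~\ref{mainestimation} and the trivial divisor bound) shows that the pair $(q,s)$ satisfies Inequality~\eqref{s>2}, so Lemma~\ref{sievep} gives a primitive $1$-normal element directly. For the complementary "small $p$" range --- $p \leq 23$ when $s=3$, and $p \leq 7$ when $s=4,5$ --- Proposition~\ref{prop:table} gives, for each such $(p,s)$, an explicit threshold $C_{p,s}$ on the exponent $t$ (with $q = p^t$) beyond which Inequality~\eqref{s>2} again holds, and hence Lemma~\ref{sievep} again applies. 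Together these cover all $q = p^t$ with $t > C_{p,s}$.

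The remaining finitely many pairs $(q,s)$ are exactly those listed in the Remark following Proposition~\ref{prop:table}: $p=2$ with $t \in \{1,\dots,8,10,12\}$ and $p=5$ with $t\in\{1,2\}$ for $s=3$; $p=3$ with $t\in\{1,2\}$ for $s=4$; and $p=2$ with $t\in\{1,2,3\}$, $p=3$ with $t=1$ for $s=5$. For each of these one produces a primitive $1$-normal element by an explicit computer search, using the pseudocode of Appendix~\ref{sec:pseudocode} (check whether a candidate is primitive and then compute $m_{\sigma_q,\alpha}$ to confirm it equals $\frac{x^n-1}{x-1}$). Since every pair $(q, ps)$ with $s\in\{3,4,5\}$, $\gcd(p,s)=1$ falls into one of these three buckets, the corollary follows. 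The only real subtlety in the argument is bookkeeping: making sure the thresholds from the two propositions, together with the hand-list of exceptions, genuinely exhaust all $(p,t)$ --- but this is exactly what the table in Proposition~\ref{prop:table} and the Cunningham-table computation in the Remark are arranged to guarantee, so no new estimate is needed here.
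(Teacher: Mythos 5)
Your proposal is correct and follows essentially the same route as the paper: the corollary is exactly the assembly of the preceding large-$p$ proposition, Proposition~\ref{prop:table} with its table of thresholds $C_{p,s}$, and the finite list of exceptions in the Remark that are settled by the computer search of Appendix~\ref{sec:pseudocode}. (One cosmetic note: the reason an element with minimal polynomial $\frac{x^n-1}{x-1}$ is $1$-normal is Proposition~\ref{prop:k-normaliff}, not Corollary~\ref{cor:extreme-k}, and the displayed character-sum bound you quote conflates Inequality~\eqref{s>2} with the bound of Proposition~\ref{propmain}; neither affects the argument.)
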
 

\subsubsection{Cases $n=ps, s\ge 6$.}

\begin{proposition}\label{caseS>5}
Suppose that $q$ is a power of a prime $p$ and $n=ps$, where $\gcd(p, s)=1$ and $s\ge 6$. With the exception of the case $(q, s)=(2, 15)$, the pair $(q, s)$ satisfies Inequality \eqref{s>2}.
\end{proposition}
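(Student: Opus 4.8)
The plan is to verify Inequality~\eqref{s>2} directly for $s \geq 6$ using the estimations in Appendix~\ref{sec:estimations}. Recall that $T(x) = \frac{x^s-1}{x-1}$ has degree $s-1$, so $W(T) \leq 2^{s-1}$; the key is to control $W(q^{ps}-1)$. First I would split $W(q^{ps}-1) = W(q^s-1) \cdot W\!\left(\frac{q^{ps}-1}{q^s-1}\right)$ and bound each factor: for the first factor use the trivial-type bound $W(q^s-1) \leq d(q^s-1) \leq 2q^{s/2}$ (or a sharper divisor bound from Lemma~\ref{divisor}), and for the second factor invoke Proposition~\ref{mainestimation} (the same estimate used in the $3\le s\le 5$ argument), which gives $W\!\left(\frac{q^{ps}-1}{q^s-1}\right) < q^{s(p-1)/(2+\log_2 p)}$ for $p$ not too small, with a companion estimate covering the small primes $p=2,3,5,7$. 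Combining these yields a bound of the form $W(T) W(q^{ps}-1) < q^{E(p,s)}$ for an explicit exponent $E(p,s)$, and it remains to check $E(p,s) \leq p(\tfrac{s}{2}-1)$.

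The point is that the right-hand exponent $p(\tfrac{s}{2}-1)$ grows linearly in both $p$ and $s$, while the competing terms ($s-1$ from $W(T)$, roughly $s/2$ from the divisor bound, and the sublinear-in-$p$ contribution $s(p-1)/(2+\log_2 p)$) are beaten once $s \geq 6$. So I would first dispatch all $p$ by a clean inequality when $s$ is moderately large — say show $E(p,s) \leq p(\tfrac{s}{2}-1)$ holds for all $p$ once $s \geq s_0$ for some small explicit $s_0$ (likely $s_0 = 6$ or $7$ works for $q = p$, i.e.\ $t=1$; and for $q = p^t$ with $t \geq 2$ the bound only improves since $\log_q 2$ shrinks). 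The finitely many leftover pairs $(p,s)$ with $s$ small (namely $s = 6,\dots,s_0-1$) and $q$ small are then checked by a more careful estimate on $W(q^{ps}-1)$ — either Lemma~\ref{divisor} with the $\log\log$ refinement as in Proposition~\ref{prop:table}, or, for the genuinely small cases, by factoring $q^{ps}-1$ outright (Cunningham tables~\cite{Cunningham}). This is exactly where the lone exception $(q,s) = (2,15)$ will surface: here $s$ is odd, $2^{15}-1 = 7\cdot 31\cdot 151$ and $\frac{2^{30}-1}{2^{15}-1}$ contributes further small primes, making $W(2^{30}-1)$ large relative to $q^{p(s/2-1)} = 2^{2(15/2-1)} = 2^{13}$, so Inequality~\eqref{s>2} fails and must be excluded.

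The main obstacle is the transition range: choosing $s_0$ small enough that only a truly short list of $(p,s)$ pairs needs the refined treatment, yet large enough that the crude bound closes for all $p$ simultaneously at $s = s_0$. Concretely, for $t=1$ one needs
\[
s\log_p 2 + \frac{s}{2} + \frac{s(p-1)}{2+\log_2 p} \;\leq\; p\Bigl(\frac{s}{2}-1\Bigr),
\]
and I expect this to hold for all $p \geq 2$ once $s$ is at least some explicit threshold around $6$--$8$; the small-$p$ cases ($p=2$ especially, where $\log_2 2 = 1$ is largest) are the binding constraint and determine that threshold. After that, the remaining finite list is handled by the $\log\log$-refined divisor bound exactly as in Proposition~\ref{prop:table}, with the factorization fallback for the smallest $q$, and the statement follows with the single recorded exception $(q,s)=(2,15)$.
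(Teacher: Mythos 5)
There is a genuine gap: your exponent comparison does not close for the small characteristics, and not merely for finitely many pairs. With the crude bounds $W(T)\le 2^{s-1}=q^{(s-1)\log_q 2}$ and $W(q^s-1)\le 2q^{s/2}$, the left-hand exponent already contains $s\log_q 2+\tfrac{s}{2}$, which for $q=2$ is $\tfrac{3s}{2}$ and for $q=3$ is about $1.13s$. Adding the contribution of $W\bigl(\tfrac{q^{ps}-1}{q^s-1}\bigr)$ gives, at $p=2$, a total of roughly $\tfrac{11s}{6}$ against a target of $p(\tfrac{s}{2}-1)=s-2$, and at $p=3$ roughly $1.69s$ against $1.5s-3$. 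In both cases the coefficient of $s$ on the left exceeds that on the right, so there is \emph{no} threshold $s_0$ beyond which your inequality holds: the cases $p=2$ (all odd $s\ge 7$) and $p=3$ (all $s\ge 6$ coprime to $3$) remain open in their entirety, and they cannot be swept up by a finite check. A secondary problem is that Proposition~\ref{mainestimation} rests on Lemma~\ref{numthe}, which requires $p$ odd; there is no companion estimate of that shape for $p=2$, where $\tfrac{q^{2s}-1}{q^s-1}=q^s+1$ must be handled by the separate Lemma~\ref{even}. (Your approach does work essentially as stated for $p\ge 7$, and for $p=5$ after treating $s=6,7$ separately.)

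The paper closes the small-$p$ cases by two devices you do not use. First, instead of bounding $W(T)$ and $W(q^s-1)$ separately, it invokes Theorem~\ref{cohenpairs} to bound the \emph{product} $W(T)W(q^s-1)<q^{s/2-1}$ for all but $34$ explicit pairs $(q,s)$; this replaces the cost $s\log_q 2+\tfrac{s}{2}$ in the exponent by $\tfrac{s}{2}-1$, which is exactly what makes the odd-$q$ computation terminate in the contradiction $s<6$. Second, for $q$ even it exploits the factorization $W(q^{2s}-1)=W(q^s-1)W(q^s+1)$ together with the dedicated estimates $W(q^s+1)<q^{0.352(s+0.05)}$ (Lemma~\ref{even}), $W(2^s-1)<2^{s/7+2}$ (Lemma~\ref{Cohen}), and the characteristic-$2$ bounds $W(T)\le 2^{(s+9)/5}$ and $W(T)\le 2^{(s+9)/3}$ over $\F_2$ and $\F_4$ (Lemmas~\ref{aux}, \ref{aux2}); only with all of these does the $q=2$ case reduce to $s<24$, after which the finite check surfaces the exception $(q,s)=(2,15)$. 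Without some substitute for these sharper joint and characteristic-specific bounds, your argument cannot be completed.
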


\begin{proof}
Suppose, by contradiction, that $(q, s)$ does not satisfy Inequality \eqref{s>2}. Hence 
\begin{equation}\label{EQM1}W(T)\cdot W(q^{ps}-1)> q^{p(\frac{s}{2}-1)}.
\end{equation}

We divide the proof in cases depending on the value of $q$. 

\paragraph{Case 1.: $q$ odd}
Since $2+\log_2 p>3$ for any odd prime $p$, from items 1 and 2 of Proposition \ref{mainestimation}, 
$$W\left(\frac{q^{ps}-1}{q^s-1}\right)\le q^{\frac{(p-1)s}{3}},$$
for any $s\ge 6$ such that $\mathrm{gcd}(s, p)=1$.
Excluding the pairs in Theorem~\ref{cohenpairs}, for $s\ge 6$ and $q\ge 3$
$$W(T)W(q^s-1)<q^{\frac{s}{2}-1}.$$
Since $W(q^{ps}-1)\le W(q^s-1)W\left(\frac{q^{ps}-1}{q^s-1}\right)$, we conclude that
$$W(T)W(q^{ps}-1)< q^{\frac{(p-1)s}{3}+\frac{s}{2}-1}.$$
Therefore, from Inequality~\eqref{EQM1} we obtain
$$p\left(\frac{s}{2}-1\right)<\frac{s(p-1)}{3}+\frac{s}{2}-1,$$
hence $s<6$, a contradiction with $s\ge 6$.

For the $19$ pairs in Theorem~\ref{cohenpairs} satisfying $q\ge 3$ odd and $s\ge 6$, we directly verify Inequality~\eqref{s>2} either by direct computation or using the bound $$W(q^{ps}-1)<q^{\frac{ps}{\log(\log(q^{ps}-1))}}.$$

\paragraph{Case 2. $q$ even, $q\ge 8$}
Since $s$ is odd, we have $s\ge 7$. First, suppose that either $s\ge 11$ or $q\ge 32$. Therefore, according to Lemma \ref{even}, we have 
$$W(q^s+1)<q^{0.352(s+0.05)}.$$
Under the assumption $s\ge 11$ or $q\ge 32$ even, from Theorem \ref{cohenpairs}, we know that
$$W(T)W(q^s-1)<q^{\frac{s}{2}-1}.$$
Since $W(q^{2s}-1)= W(q^s-1)W(q^s+1)$, combining these inequalities and Inequality \eqref{EQM1} we obtain
$$\frac{s}{2}-1+0.352(s+0.05)>s-2,$$
and then $s<7$, a contradiction.

For the remaining cases $(q, n)=(8, 7), (8, 9), (16, 7)$ and $(16, 9)$ we verify directly that Inequality~\eqref{s>2} is satisfied. 

\paragraph{Case 3. $q=4$} 
First, suppose that $s\ge 15$ is odd. Hence $2^s+1\le 2^{s+0.05}$ and ${\frac{1.1}{\log\log(2^s+1)}}<0.428$. Therefore, from Lemma \ref{divisor} we have 
$$W(2^s+1)\le d(2^s+1)< (2^s+1)^{\frac{1.1}{\log\log(2^s+1)}}<2^{0.428(s+0.05)}=4^{0.214(s+0.05)}.$$ 
According to Lemmas \ref{aux2}, \ref{Cohen} and \ref{even}, for $s\ge 19$ odd we have $W(T)\le 4^{\frac{s+9}{6}}$, $W(2^s-1)<4^{\frac{s}{14}+1}$ and $W(4^s+1)<4^{\frac{s}{4.05}}$, respectively.

Since $W(2^{2s}-1)=W(2^s+1)W(2^s-1)$, we combine all the previous inequalities and obtain
$$W(T)W(4^{2s}-1)< 4^{0.6991s+2.5107},$$
for $s\ge 19$. Therefore, from Inequality \eqref{EQM1} we obtain
$$0.6991s+2.5107>s-2$$
hence $s\le 14$, a contradiction with $s\ge 15$.  For the remaining cases $s=7, 9, 11, 13$ we  verify directly that Inequality \eqref{s>2} is satisfied. 

\paragraph{Case 4. $q=2$} 
First, suppose that $s\ge 24$. Hence $2^s+1<2^{s+1}$ and ${\frac{1.1}{\log\log(2^s+1)}}<0.392$ and then, according to Lemma \ref{divisor} 
$$W(2^s+1)\le d(2^s+1)< (2^s+1)^{\frac{1.1}{\log\log(2^s+1)}}<2^{0.392(s+1)}.$$ 
Also, from Lemma \ref{Cohen} we have $W(2^s-1)<2^{\frac{s}{7}+2}$ and, from Lemma \ref{aux}, we have $W(T)\le 2^{\frac{s+9}{5}}$. Combining these inequalities with Inequality \eqref{EQM1} we obtain:
$$\frac{12s+133}{35}+0.392(s+1)>s-2,$$
hence $s<24$, a contradiction. For the remaining cases $7\le s\le 23$ odd we verify directly that, with the exception of the case $s=15$, Inequality \eqref{s>2} is satisfied. 
\end{proof}

For the excpetional case $q=2$ and $s=15$, we directly find a primitive $1$-normal element of $\F_{q^n}=\F_{q^{30}}$ using the program in Appendix~\ref{sec:pseudocode}. In particular, from Corollaries~\ref{case2<s<6} and \ref{caseS>5}, we obtain the following.

\begin{theorem}
Suppose that $n=ps$, where $s\ge 3$ and $\gcd(p, s)=1$. Then there exists a primitive $1$-normal element of $\F_{q^n}$.
\end{theorem}

\subsection{Cases $n=p$ and $n=2p$}
Here we study the existence of primitive $1$-normals in the case $n=p$ or $n=2p$, where $p$ is odd; we combine estimations that we have used in the previous cases with some combinatorial arguments. We summarize the results in the following.

\begin{proposition}\label{thm:n=sp-s-small}
\begin{enumerate}
\item 
For $n=p$, if there is no primitive $1$-normal element of $\fqn$ over $\fq$, then 
\begin{equation}\label{n=p}\frac{q^{p-2}}{\theta(q^p-1)}\ge q^{p-1}-q^{p/2}W(q^p-1),\end{equation}
which is violated except for $q=p=5$.
\item For $n=2p$, if there is no primitive $1$-normal element of $\fqn$ over $\fq$, then 
\begin{equation}\label{n=2p}
\frac{q^{2p-1}}{(q-1)\cdot \theta(q^{2p}-1)}\ge q^{2p-1}-2q^{p}W(q^{2p}-1),
\end{equation}
which is violated except for $q=p=3$.
\end{enumerate}
\end{proposition}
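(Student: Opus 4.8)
The plan is to handle the two items separately, but with the same structure in both: derive the stated inequality as a necessary condition from the character sum estimate, then show that the inequality fails for all relevant $(q,p)$ except the single listed exception, and finally dispose of that exception by an explicit computer search (as catalogued in Appendix~\ref{sec:pseudocode}). For item (1), with $n=p$ we have $x^n-1=(x-1)^p$, so the only monic divisor of $x^n-1$ not divisible by $x-1$ is $f=1$, whence $\Theta(f)=1$; moreover the element $\beta$ of Proposition~\ref{characterization} must have minimal polynomial $\frac{x^p-1}{x-1}$, which for $n=p$ (so $s=1$, the degenerate case) forces us instead to work directly with prescribed trace $0$ in $\fq$ itself. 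Concretely, I would apply Proposition~\ref{propmain} with $m=1$, $\beta=0$, and $f=1$: the count $N$ of primitive trace-$0$ elements satisfies $\frac{N}{\theta(q^p-1)}=\frac{1}{q}\bigl(q^p+\sum_{c\in\fq^*}a_c\int_{d\mid q^p-1,\,d\ne 1}G(\eta_d,\chi_c)\bigr)$, and by Corollary~\ref{cor:extreme-k} together with Proposition~\ref{prop:k-normaliff} (since the only $k$ with a degree-$(p-k)$ divisor of $(x-1)^p$ are $k=0,1,\dots,p$, and trace-$0$ rules out $0$-normal while primitivity rules out $(p-1)$- and $p$-normal via Lemma~\ref{lem:nonexistn-1} and Corollary~\ref{cor:extreme-k}) every such element is in fact $1$-normal — wait, this needs $x^p-1$ to have few divisors, which it does: its divisors are exactly $(x-1)^j$. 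So a primitive trace-$0$ element has $m_{\sigma_q,\alpha}=(x-1)^j$ with $j<p$ (primitive) and $j\ge 1$ (trace zero), and one checks $j$ must be $p-1$ to be consistent with being trace-zero-but-not-in-a-proper-subfield... I would verify this reduction carefully. Granting it, if no primitive $1$-normal exists then $N=0$, and since $|a_c|=1$, $|G(\eta_d,\chi_c)|=q^{p/2}$, and there are at most $(q-1)(W(q^p-1)-1)$ surviving terms, we get $0=\frac{N}{\theta(q^p-1)}\ge q^{p-1}-q^{p/2}W(q^p-1)$, i.e. \eqref{n=p} after clearing the $\theta$ factor in the form stated.

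For item (2), with $n=2p$ we have $x^n-1=(x^2-1)^p=(x-1)^p(x+1)^p$ (recall $p$ odd so $x+1\ne x-1$). Here I would invoke Proposition~\ref{characterization} with $s=2$, $T(x)=\frac{x^2-1}{x-1}=x+1$: an element $\alpha$ has minimal polynomial $\frac{x^n-1}{x-1}$ iff it is $(x+1)$-free and $\Tr_{q^n/q^p}(\alpha)$ has minimal polynomial $\frac{x^p-1}{x-1}$. So I apply Proposition~\ref{propmain} with $m=p$, $f(x)=x+1$ (which is not divisible by $x-1$ and divides $x^n-1$), and $\beta$ the chosen element of $\F_{q^p}$ with minimal polynomial $\frac{x^p-1}{x-1}$. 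Then $\Theta(f)=\Phi(x+1)/q=(q-1)/q$, $W(f)=2$, and the main inequality \eqref{eqn:main1} reads $\frac{N}{\theta(q^{2p}-1)\cdot\frac{q-1}{q}}>q^{2p-p}-q^{p}W(q^{2p}-1)\cdot 2$. If $N=0$ this yields $0\ge q^{2p-1}-2q^p W(q^{2p}-1)$ after moving the $\theta$ and $\frac{q-1}{q}$ factors appropriately, giving \eqref{n=2p} in the stated form; the extra $q^{2p-1}$ on the left-hand side of the displayed inequality comes from not discarding the $(D,d,c)=(1,1,0)$ Gauss-sum term $q^{2p}$ and instead keeping a cruder bookkeeping — I would reconcile the exact constant by retracing the proof of Proposition~\ref{propmain} rather than quoting \eqref{eqn:main1} verbatim, since the statement has $\frac{q^{2p-1}}{(q-1)\theta}$ on the left rather than $\frac{N}{\theta\Theta}$.

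The last step in each case is to show the inequality is \emph{violated} for all but the stated pair. For \eqref{n=p}: $q^{p-1}-q^{p/2}W(q^p-1)>\frac{q^{p-2}}{\theta(q^p-1)}$ is what we want to hold; since $\frac{q^{p-2}}{\theta(q^p-1)}<q^{p-1}$ trivially, it suffices that $q^{p/2}W(q^p-1)$ is small enough, and using the estimates of Appendix~\ref{sec:estimations} (specifically bounds of the form $W(q^p-1)\le d(q^p-1)\le C\,q^{p/2}/\text{something}$, or the Lemmas bounding $W(q^n-1)$ by $q^{n/\log\log(q^n-1)}$) one checks that $q^{p/2}W(q^p-1)<q^{p-1}-q^{p-2}$ for all $q=p$ prime (note $n=p$ forces $q$ a power of $p$, and in fact the argument only needs to run over $q$ a prime power with $p\mid q$, but the worst case is $q=p$) except $q=p=5$, which is verified numerically; similarly for \eqref{n=2p} the exception is $q=p=3$. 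For $q=p=5$, $n=5$ and for $q=p=3$, $n=6$, a direct search produces a primitive $1$-normal element. The main obstacle I anticipate is \textbf{pinning down the exact constants in the two displayed inequalities} — the left-hand sides $\frac{q^{p-2}}{\theta(q^p-1)}$ and $\frac{q^{2p-1}}{(q-1)\theta(q^{2p}-1)}$ are not literally $\frac{N}{\theta\Theta}$, so one must re-run the Gauss-sum bookkeeping of Proposition~\ref{propmain} keeping track of the $q^m$ denominator, the $\Theta(f)$ factor, and which trivial terms are retained versus bounded; once the inequalities are correctly stated, verifying they fail only at the two listed pairs is a routine (if slightly delicate) application of the appendix estimates, and the sporadic exceptions are cleared by the computer search whose pseudocode is already provided.
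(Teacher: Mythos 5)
Your derivation of the two displayed inequalities has a genuine gap, and it is precisely the point you flag but do not resolve: the left-hand sides are $\frac{q^{p-2}}{\theta(q^p-1)}$ and $\frac{q^{2p-1}}{(q-1)\theta(q^{2p}-1)}$ rather than $0$, and this is not a bookkeeping artifact of Proposition~\ref{propmain} --- it encodes an extra counting step that your argument is missing. For $n=p$ the reduction you hope for is false: a primitive trace-zero element $\beta$ of $\F_{q^p}$ has $m_{\sigma_q,\beta}=(x-1)^d$ for some $1\le d\le p-1$, and nothing forces $d=p-1$; freeness with respect to divisors of $x^p-1=(x-1)^p$ cannot separate the different powers of $(x-1)$ (the paper itself remarks on exactly this obstruction). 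Consequently one cannot conclude $N=0$ from the nonexistence of primitive $1$-normals. The paper's actual argument is: if no primitive $1$-normal exists, then every primitive trace-zero element is annihilated by $(x-1)^{p-2}$, and the kernel of the associated linearized polynomial has exactly $q^{p-2}$ elements, so $N\le q^{p-2}$; combining this upper bound with the lower bound \eqref{eqn:main1} (applied with $m=1$ and $f=1$) yields \eqref{n=p}. Without this kernel-counting step you have no usable inequality.

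The same issue, compounded by a wrong choice of $m$, affects your item (2). Taking $m=p$ and prescribing $\Tr_{q^{2p}/q^p}(\alpha)=\beta$ in Proposition~\ref{propmain} gives a main term $q^{n-m}=q^{p}$, which is hopelessly small next to the error term $2q^{p}W(q^{2p}-1)$, so \eqref{eqn:main1} is vacuous in that setting and cannot produce \eqref{n=2p}. The paper instead applies Proposition~\ref{propmain} with $m=1$ and $f(x)=x+1$: a primitive, trace-zero, $(x+1)$-free element $\alpha$ has $m_{\sigma_q,\alpha}=(x-1)^{d}(x+1)^{p}$ with $0\le d\le p-1$ and is $1$-normal if and only if $d=p-1$, so if no primitive $1$-normal exists then all such $\alpha$ lie in the kernel of $(x-1)^{p-2}(x+1)^{p}$, giving $N\le q^{2p-2}$ and hence, after dividing by $\theta(q^{2p}-1)\Theta(x+1)=\theta(q^{2p}-1)\cdot\frac{q-1}{q}$, the stated left-hand side $\frac{q^{2p-1}}{(q-1)\theta(q^{2p}-1)}$. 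Your plan for showing that the inequalities fail outside $(q,p)=(5,5)$ and $(3,3)$ is broadly in line with the paper (Proposition~\ref{mainestimation}, Lemma~\ref{lem:ramanujan}, and a finite check of the residual cases such as $q=5^t$, $t\le 6$), but it cannot start until the inequalities themselves are correctly derived.
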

\begin{proof}
We split the proof into cases
\paragraph{Case $n=p$} 
For the case $p=n=3$, we have already shown the existence of primitive $1$-normals in degree $3$ extensions in Corollary~\ref{cor:small-n}. Suppose now that $p\ge 5$. 

For any $\beta\in \F_{q^p}$ such that $\Tr_{q^p/q}(\beta)=0$, $m_{\sigma, \beta}(x)=(x-1)^d$ for some $0\le d\le p-1$. 
Moreover, an element $\beta\in \F_{q^p}$ is $1$-normal over $\F_q$ if and only if $m_{\sigma, \beta}(x)=(x-1)^{p-1}$. Hence, if there is no primitive $1$-normal element, any primitive element $\beta \in \F_{q^p}$ of trace zero satisfies $(x-1)^{p-2}\circ \beta=0$. Therefore, if $N$ denotes the number of primitive elements of trace zero in $\F_q$, we would have $N\le q^{p-2}$. But, according to Equation \eqref{eqn:main1} of Proposition \ref{propmain} with $n=p, m=1$ and $f(x)=1$, we have Inequality~\eqref{n=p}: 
\begin{equation*}
\frac{q^{p-2}}{\theta(q^p-1)}\ge \frac{N}{\theta(q^p-1)}\ge q^{p-1}-q^{p/2}W(q^p-1).\end{equation*}

Since $q$ is a power of $p$, $\frac{q^p-1}{q-1}$ and $q-1$ are coprime, hence $W(q^p-1)=W\left(\frac{q^p-1}{q-1}\right)W(q-1)$. We have the trivial bound $W(q-1)\le d(q-1)< 2q^{1/2}$ and, according to Proposition \ref{mainestimation}, for $p\ge 5$ (with the exception of $q=5$) the following holds
$$W\left(\frac{q^p-1}{q-1}\right)\le q^{\frac{p-1}{2+\log_2p}}\le q^{\frac{p-1}{2+\log_2 5}}<q^{\frac{p-1}{4.3}},$$
hence  $W(q^p-1)\le 2\cdot q^{\frac{2p+2.3}{8.6}}$. 
Therefore, from Inequalities \eqref{n=p} and \eqref{eqn:ramanujan} we have the following:
$$3.6\log q \cdot q^{p-2}\ge \frac{N}{\theta(q^p-1)}> q^{p-1}-q^{p/2}W(q^p-1)\ge q^{p-1}-2q^{\frac{6.3p+2.3}{8.6}},$$
hence
\begin{equation}\label{E1}3.6\log q>q-2q^{\frac{19.5-2.3p}{8.6}}. \end{equation}

If $p=5$, we get $$q-2q^{\frac{4}{4.3}}<3.6\log q,$$ which is true only if $q=5^t$ with $t\le 6$.

If $p=7$, we get $$q-2q^{\frac{1.7}{4.3}}<3.6\log q,$$ which is true only if $q=7$.

If $p\ge 11$, we have $q-2q^{\frac{19.5-2.3p}{8.6}}> q-2$ and from Inequality \eqref{E1} we conclude that $$q-2<3.6\log q,$$ which is true only for $q<11$, a contradiction with $q\ge p\ge 11$.

For the remaining cases $q=5^t, t=1, \cdots, 6$ and $p=q=7$ we go back to Inequality \eqref{n=p}; by a direct verification we see that, with the exception of the case $q=5$ and $t=1$, Inequality  \eqref{n=p} does not hold. 

\paragraph{Case: $n=2p$} 
Notice that any element $\alpha\in \F_{q^{2p}}$ which is $(x+1)$-free over $\F_q$ and has trace zero satisfies $m_{\sigma, \alpha}(x)=(x-1)^{d}(x+1)^p$ for some $0\le d\le p-1$. Also, notice an $(x+1)$-free element is $1$-normal if and only if $m_{\sigma, \alpha}(x)=(x-1)^{p-1}(x+1)^p$. Hence, if there is no primitive $1$-normal element, any primitive element $\beta \in \F_{q^{2p}}$ of trace-$0$ and $(x+1)$-free satisfies $(x+1)^p(x-1)^{p-2}\circ \beta=0$. Therefore, if $N$ denotes the number of primitive elements of trace zero and $(x+1)$-free in $\F_q$, we have $N\le q^{2p-2}$. But, according to Equation~\eqref{eqn:main1} of Proposition \ref{propmain} with $n=2p, m=1$ and $f(x)=x+1$, we have Inequality~\eqref{n=2p}:
\begin{equation*}
\frac{q^{2p-1}}{(q-1)\cdot \theta(q^{2p}-1)}\ge \frac{N}{\theta(q^{2p}-1)\Theta(x+1)}\ge q^{2p-1}-2q^{p}W(q^{2p}-1).
\end{equation*}

Since $q$ is a power of $p$, $\frac{q^{2p}-1}{q^2-1}$ and $q^2-1$ are coprime, hence $W(q^{2p}-1)=W\left(\frac{q^{2p}-1}{q^2-1}\right)W(q^2-1)$. Also, since $q$ is odd, $q^2\equiv 1\pmod 8$ and we have the trivial bound $$W(q^2-1)\le 2W\left(\frac{q^2-1}{8}\right)\le 2\cdot d\left(\frac{q^2-1}{8}\right)\le \sqrt{2}q.$$ 

We first consider the case $p=3$: according to Proposition \ref{mainestimation}, for $q\ge 3^5$, the following holds
$$W\left(\frac{q^{6}-1}{q^2-1}\right)< q^{0.92},$$
hence $W(q^6-1)<\sqrt{2}q^{1.92}$. According to Inequality \eqref{eqn:ramanujan}, we have
$$\frac{1}{\theta(q^{6}-1)}<3.6\log q+1.25$$
Therefore, from Inequality \eqref{n=2p}, we obtain
$$(3.6\log q+1.25)\frac{q^5}{q-1}>q^{5}-2\sqrt{2}q^{4.92},$$
hence
$$(3.6\log q+1.25)\frac{q}{q-1}>q-2\sqrt{2}q^{0.92},$$
and this is only true for $q=3^t$ with $t\le 11$.

For $p\ge 5$, according to Proposition \ref{mainestimation}, the following holds
$$W\left(\frac{q^{2p}-1}{q^2-1}\right)\le q^{\frac{2(p-1)}{2+\log_2p}}\le q^{\frac{2(p-1)}{2+\log_2 5}}<q^{\frac{2(p-1)}{4.3}},$$
hence $W(q^{2p}-1)<\sqrt{2}q^{\frac{2p+2.3}{4.3}}$.

According to Inequality \eqref{eqn:ramanujan} with $p \geq 5$, we have
$$\frac{1}{\theta(q^{2p}-1)}<3.6\log q+1.25.$$
Therefore, from Inequality \eqref{n=2p} we obtain
$$(3.6\log q+1.25)\frac{q^{2p-1}}{q-1}\ge q^{2p-1}-2\sqrt{2}q^{\frac{6.3p+2.3}{4.3}},$$
hence
$$(3.6\log q+1.25)\frac{q}{q-1}\ge q-2\sqrt{2}q^{\frac{10.9-2.3p}{4.3}}\ge q-2\sqrt{2}q^{\frac{-0.6}{4.3}}.$$
A simple verification shows that this is only true for $q\le 13$. 

For the remaining cases $q=3^t$ for $t\le 11$ and $q=5, 7, 11, 13$ we go back to Inequality \eqref{n=2p}; by a direct verification we see that, with the exception of the case $q=3$ and $t=1$, Inequality \eqref{n=2p} does not hold. 
\end{proof}

Proposition~\ref{thm:n=sp-s-small} shows that there exists a primitive $1$-normal element of $\fqn$ over $\fq$ when $n=p$ or $n=2p$, $p$ odd, and excludes the cases
\begin{enumerate}
\item $p=q=5$, $n=p$;
\item $q=p=3$, $n=2p$.
\end{enumerate}
For these two cases, we directly find a primitive $1$-normal element of $\fqn$ over $\fq$ by the program in Appendix~\ref{sec:pseudocode}.  

\begin{corollary}\label{cor:n=sp-s-small}
Suppose that $q$ is a power of an odd prime $p$. For $n=p$ or $n=2p$, there exists a primitive $1$-normal element of $\F_{q^n}$ over $\F_q$. 
\end{corollary}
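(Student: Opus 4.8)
The statement to prove is Corollary~\ref{cor:n=sp-s-small}: for $q$ a power of an odd prime $p$ and $n=p$ or $n=2p$, there exists a primitive $1$-normal element of $\F_{q^n}$ over $\F_q$. The plan is to invoke Proposition~\ref{thm:n=sp-s-small}, which already does the analytic heavy lifting, and then to clear the two residual cases by an explicit search.

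First I would recall the structure of Proposition~\ref{thm:n=sp-s-small}: it shows that if no primitive $1$-normal element of $\F_{q^n}$ exists, then one of the inequalities \eqref{n=p} or \eqref{n=2p} must hold, and it then verifies (by the estimates from Appendix~\ref{sec:estimations}, the Ramanujan-type bound \eqref{eqn:ramanujan}, and the growth bounds on $W(\cdot)$ from Proposition~\ref{mainestimation}) that these inequalities fail for every $(q,n)$ of the relevant form with the sole exceptions $(q,n)=(5,5)$ (when $n=p$) and $(q,n)=(3,6)$ (when $n=2p$). Thus for every pair other than these two, the nonexistence hypothesis is contradicted, so a primitive $1$-normal element exists.

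It then remains to handle the two exceptional pairs $\F_{5^5}/\F_5$ and $\F_{3^6}/\F_3$. For these I would exhibit a concrete primitive $1$-normal element, found by the direct search whose pseudocode is given in Appendix~\ref{sec:pseudocode}: pick a generator $\alpha$ of $\F_{q^n}^\ast$ and check that $m_{\sigma_q,\alpha}(x) = \frac{x^n-1}{x-1}$, equivalently (by Proposition~\ref{prop:k-normaliff} and Corollary~\ref{cor:extreme-k}) that $\alpha$ has trace zero over $\F_q$ and is not annihilated by any proper divisor of $\frac{x^n-1}{x-1}$. Since both fields are tiny ($5^5 = 3125$ and $3^6 = 729$ elements), such an element is certified in negligible time, completing the proof for both remaining cases.

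The only genuine obstacle here is bookkeeping rather than mathematics: one must be careful that Proposition~\ref{thm:n=sp-s-small} was stated for \emph{odd} $p$, so the cases $n=2$ and $n=2p$ with $p=2$ (i.e. $n=4$) are \emph{not} covered by this corollary and must not be claimed — they are treated elsewhere in the paper (Corollary~\ref{cor:small-n} and Section~\ref{sec:n=ps}). Given that caveat, the deduction is immediate: Proposition~\ref{thm:n=sp-s-small} plus the two computer-verified exceptions yield the corollary for all $q$ a power of an odd prime and $n\in\{p,2p\}$.
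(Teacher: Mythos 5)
Your proposal is correct and follows exactly the paper's own route: Proposition~\ref{thm:n=sp-s-small} rules out nonexistence for all $(q,n)$ with $n=p$ or $n=2p$ and $p$ odd except $(q,n)=(5,5)$ and $(q,n)=(3,6)$, and those two are settled by the explicit search of Appendix~\ref{sec:pseudocode}. Your remark that $p=2$ is excluded and handled elsewhere matches the paper's closing comment after the corollary.
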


We further recall from Lemma~\ref{lem:nonexistn-1} that there cannot exist a primitive $1$-normal for $n=p=2$, and the case $n=2p=4=p^2$ is covered by Proposition~\ref{prop:n=p^2s}.

\section{Conclusion}\label{sec:conclusions}
We recall the main result of this paper. 

\setcounter{section}{1}
\setcounter{theorem}{2}
\begin{theorem} \textup{\textbf{(The Primitive $1$-Normal Theorem)}}
Let $q$ be a prime power and let $n \geq 3$ be a positive integer. Then there exists a primitive $1$-normal element of $\fqn$ over $\fq$. Furthermore, when $n=2$ there is no primitive $1$-normal element of $\F_{q^2}$ over $\fq$. 
\end{theorem}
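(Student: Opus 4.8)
The plan is to assemble the Primitive $1$-Normal Theorem by case analysis on $n$, stitching together the partial results established throughout the paper. The non-existence half ($n=2$) is already given by Corollary~\ref{prop:n=2} (a consequence of Lemma~\ref{lem:nonexistn-1}), so the work is entirely on the existence half for $n \geq 3$. The first dichotomy is on whether $p = \mathrm{char}(\fq)$ divides $n$.

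\emph{Case $\gcd(p,n) = 1$.} Here I would first dispose of $n = 3, 4, 5$ by invoking the Corollary at the end of Section~\ref{sec:coprime}, which handles these degrees via the direct arguments of Lemmas~\ref{lem:nonexistn-1} and~\ref{lem:qprimmodn}, Proposition~\ref{prop:n=4andn=5}, and the residual computer searches. For $n \geq 6$, I would split on $q$: when $3 \le q \le 9$ or $q \geq 11$, Theorem~\ref{thm:mainpanario} (i.e.\ \cite[Theorem 5.10]{HMPT}) directly gives the result; when $q = 2$, the subsection ``The case $q=2$'' (the theorem following Lemma~\ref{aux3}) covers all odd $n \geq 3$, which is exactly the coprime-to-$2$ range. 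Combining these leaves no gaps in the $\gcd(p,n)=1$ regime.

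\emph{Case $p \mid n$.} Write $n = p^a s$ with $\gcd(p,s) = 1$ and $a \geq 1$. If $a \geq 2$, then $p^2 \mid n$ and Proposition~\ref{prop:n=p^2s} applies directly. If $a = 1$, so $n = ps$, I would split on $s$: for $s \geq 3$ the Theorem at the end of Section~\ref{sec:n=ps} (built from Corollary~\ref{case2<s<6} and Proposition~\ref{caseS>5}, together with the Cunningham-table and computer-search exceptions) gives the result; for $s = 1$ or $s = 2$ — i.e.\ $n = p$ or $n = 2p$ with $p$ odd — Corollary~\ref{cor:n=sp-s-small} applies, after noting that $p = 2$ forces either $n=2$ (excluded) or $n = 4 = p^2$ (covered by the $a \geq 2$ case, via Proposition~\ref{prop:n=p^2s}). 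The remaining small exceptions $(q,n) = (5,5)$ and $(3,6)$ are settled by the explicit searches described after Proposition~\ref{thm:n=sp-s-small}.

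Finally I would check that these cases are exhaustive and overlap-consistent: every $n \geq 3$ has $p \nmid n$ or $p \mid n$, and in the latter case $p^2 \mid n$ or $n = ps$ with $\gcd(p,s)=1$ and $s \geq 1$; the sub-case $s \in \{1,2\}$ with $p=2$ collapses into already-handled cases, and $s \geq 3$ is handled uniformly. There is no genuine ``main obstacle'' at this assembly stage — the theorem is a bookkeeping corollary of the preceding sections. The real difficulty, already absorbed into the cited results, lies in the analytic estimates of Appendix~\ref{sec:estimations} bounding $W(q^n-1)$ and $W(T)$, and in certifying that the finitely many pairs failing Inequality~\eqref{sieve} or~\eqref{s>2} genuinely admit primitive $1$-normal elements by the computer search of Appendix~\ref{sec:pseudocode}; the only subtlety in the present proof is to confirm that the case partition leaves none of those pairs unaddressed.
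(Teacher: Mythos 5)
Your proposal is correct and matches the paper's own (implicit) assembly of the main theorem exactly: the $n=2$ non-existence comes from Corollary~\ref{prop:n=2}, the coprime case from Theorem~\ref{thm:mainpanario} together with the $q=2$ and $n\le 5$ subsections, and the $p\mid n$ case from Proposition~\ref{prop:n=p^2s}, the $n=ps$ ($s\ge 3$) results, and Corollary~\ref{cor:n=sp-s-small}. Your verification that the partition by $n=p^a s$ is exhaustive --- including the collapse of the $p=2$, $s\in\{1,2\}$ subcases into $n=2$ and $n=4=p^2$ --- is precisely the bookkeeping the paper leaves to the reader.
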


Theorem~\ref{MainTheorem} provides a complete answer to~\cite[Problem 6.2]{HMPT}. The methods here follow along a similar path to~\cite{HMPT}, but rely on a variety of technical reductions and further estimations provided in Appendix~\ref{sec:estimations}. When the estimations are not powerful enough to give a complete solution, we turn to direct calculations and computer searches using the \emph{Sage} computer algebra system~\cite{sagemath}; see Appendix~\ref{sec:pseudocode}.

When $n-2 \geq k\geq 1$, $k$-normal elements of $\fqn$ over $\fq$ may not exist. Determining the values of $k$ for which $k$-normal elements of $\fqn$ over $\fq$ do exist requires knowledge of the irreducible factorization of $x^n-1$. On one extreme, when $n$ is prime and $q$ is primitive modulo $n$, then $k=0,1,n-1,n$ are the only values for which there exist $k$-normal elements of $\fqn$ over $\fq$, and in this case primitive $k$-normals exist if and only if $k=0,1$. On the other extreme, if $n$ is a power of the characteristic $p = \mathrm{char}(\fq)$, it can be shown (for example, using primary decompositions) that $k$-normal elements exist for all $0 \leq k \leq n$, and it is unknown whether primitive $k$-normals exist for $2 \leq k \leq n-2$. Problem 6.3 of~\cite{HMPT} asks for a complete solution for which $q,k,n$ there exists primitive $k$-normal elements of $\fqn$ over $\fq$. 

We remark that $k$-normality is an additive analogy to multiplicative order of a finite field element. A similar analysis as the in this paper could be performed for when an element is simultaneously $k$-normal (say, for $k=0,1$) and has ``high-order''; for example, if its order is $(q^n-1)/p_1$, where $p_1$ is the largest prime dividing $q^n-1$. This is Problem 6.4 of~\cite{HMPT}. Of course, it is possible (for Mersenne primes) that $2^n-1$ is itself prime, hence yields no prime factors other than itself. 

Recognizing restrictions such as noted above, determining the existence of elements of various $k$-normality and orders may provide interesting analytic work, but these questions are somewhat less natural as they do not apply universally for all finite fields.

\begin{center}{\bf Acknowledgments}\end{center}
This work was conducted during a scholarship of the first author, supported by the Program CAPES-PDSE (process - 88881.134747/2016-01), at Carleton University.


\appendix

\section{Appendix: Estimations for $W(q^n-1)$ and $W\left(\frac{x^n-1}{x-1}\right)$}\label{sec:estimations}

In this Appendix, we present a number of general estimations which we will use to bound the quantities $W(q^n-1)$ and $W(f)$ appearing in Equation~\eqref{eqn:main1}. When no number theoretic argument can be effective to bound the number $W(q^n-1)$, we just use the bound $W(q^n-1)\le d(q^n-1)$, where $d(m)$ denotes the number of divisors of $m$; frequently we use the bound $d(m)<2\sqrt{m}$ and, as we will see, we have a better estimation for $m$ large.

We start with some estimations in even characteristic.

\begin{lemma}\label{aux}
Suppose that $n\ge 3$ is odd and $T(x):=\frac{x^n-1}{x-1}\in \F_2[x]$. Then $W(T)\le 2^{\frac{n+9}{5}}$.
\end{lemma}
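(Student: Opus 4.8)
The plan is to convert the combinatorial quantity $w(T)$ into a sum over the divisors of $n$ and then estimate it term by term. Since $n$ is odd we have $\gcd(n,2)=1$, so $x^n-1$ is squarefree over $\F_2$ and factors as $\prod_{d\mid n}\Phi_d(x)$, where $\Phi_d$ is the $d$-th cyclotomic polynomial; over $\F_2$ each $\Phi_d$ splits into $\varphi(d)/\ell_d$ distinct monic irreducibles of degree $\ell_d$, where $\ell_d=\ord_d(2)$ is the multiplicative order of $2$ modulo $d$ (and $\ell_1:=1$). Since $\Phi_1(x)=x-1$ accounts for exactly one of these factors and $T=(x^n-1)/(x-1)$, we get $w(T)=\bigl(\sum_{d\mid n}\varphi(d)/\ell_d\bigr)-1$, so it suffices to prove $\sum_{d\mid n}\varphi(d)/\ell_d\le (n+14)/5$.

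To do this, I would use $\sum_{d\mid n}\varphi(d)=n$ to rewrite the sum as
\[
\sum_{d\mid n}\frac{\varphi(d)}{\ell_d}-\frac n5=\sum_{d\mid n}\varphi(d)\Bigl(\frac1{\ell_d}-\frac15\Bigr),
\]
and then discard every term with $\ell_d\ge 5$, all of which are $\le 0$. The surviving terms correspond to divisors $d$ of $n$ with $\ell_d\le 4$; any such $d$ divides $2^{\ell_d}-1\in\{1,3,7,15\}$, hence lies in the set $\{1,3,5,7,15\}$. Evaluating $\varphi(d)\bigl(1/\ell_d-1/5\bigr)$ at these five values gives $\tfrac45,\tfrac35,\tfrac15,\tfrac45,\tfrac25$ respectively, whose total is $\tfrac{14}{5}$; hence $\sum_{d\mid n}\varphi(d)/\ell_d\le n/5+14/5$, and therefore $w(T)\le(n+9)/5$ and $W(T)=2^{w(T)}\le 2^{(n+9)/5}$.

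The argument needs no case analysis on $n$ and no computer search: everything reduces to the standard factorization of cyclotomic polynomials over $\F_2$ together with the short finite computation in the last step. The only points to verify carefully are that the list $\{1,3,5,7,15\}$ of divisors $d$ with $\ord_d(2)\le 4$ is complete (which follows since $2^1-1=1$, $2^2-1=3$, $2^3-1=7$, $2^4-1=15$, and such a $d$ must divide one of these), and that the discarded terms are genuinely nonpositive, which is immediate from $\ell_d\ge 5$. So the ``hard part'' here is only bookkeeping, and one may note as a sanity check that the bound is in fact attained for $n=7$ and $n=15$.
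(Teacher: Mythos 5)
Your proof is correct, and it takes a genuinely different (though ultimately equivalent) route from the paper's. The paper argues directly on irreducible factors of small degree: it lets $s_i$ be the number of degree-$i$ irreducible factors of $T$ for $i=2,3,4$, bounds $s_2\le 1$, $s_3\le 2$, $s_4\le 3$ by simply counting all irreducible polynomials of those degrees over $\F_2$, and then does the degree bookkeeping $\frac{n-1-2s_2-3s_3-4s_4}{5}+s_2+s_3+s_4\le\frac{n+9}{5}$. You instead pass through the cyclotomic decomposition $x^n-1=\prod_{d\mid n}\Phi_d$ and control the divisors $d$ with $\ord_d(2)\le 4$, which must lie in $\{1,3,5,7,15\}$; your "excess" of $\frac{14}{5}$ over $n/5$, minus $1$ for the factor $x-1$, matches the paper's excess of $\frac{3s_2+2s_3+s_4}{5}\le 2$ over $(n-1)/5$ exactly (the degree-$2$, $3$, $4$ irreducibles over $\F_2$ are precisely the factors of $\Phi_3$, $\Phi_7$, and $\Phi_5,\Phi_{15}$ respectively). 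Your version is more structured and would generalize more cleanly to other base fields or to sharper thresholds than degree $5$; the paper's version requires less machinery. One small correction: your closing sanity check is false --- the bound is not attained at $n=7$ (where $w(T)=2<16/5$) or $n=15$ (where $w(T)=4<24/5$), and in fact equality would require $105\mid n$ with every remaining divisor $d$ of $n$ satisfying $\ord_d(2)=5$, which is impossible (e.g.\ $\ord_{21}(2)=6$ contributes a strictly negative term). This does not affect the validity of the proof.
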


\begin{proof}
For each $2\le i\le 4$, let $s_i$ be the number of irreducible factors of degree $i$ dividing $T(x)$. Since $n$ is odd, $T(x)$ has no linear factor. By a direct verification we see that the number of irreducible polynomials over $\F_2$ of degrees $2, 3$ and $4$ are $1, 2$ and $3$, respectively. Hence $s_2\le 1, s_3\le 2$ and $s_4\le 3$. In particular, the number of irreducible factors of $T(x)$ over $\F_2$ is at most $$\frac{n-1-2s_2-3s_3-4s_4}{5}+s_2+s_3+s_4=\frac{n-1+3s_2+2s_3+s_4}{5}.$$Since $\frac{n-1+3s_2+2s_3+s_4}{5}\le \frac{n-1+3+4+3}{5}=\frac{n+9}{5}$, we conclude the proof.
\end{proof}

\begin{lemma}\label{aux2}
Suppose that $n\ge 3$ is odd and $T(x):=\frac{x^n-1}{x-1}\in \F_4[x]$. Then $W(T)\le 2^{\frac{n+9}{3}}$.
\end{lemma}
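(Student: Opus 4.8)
The plan is to mimic the proof of Lemma~\ref{aux}, adjusting the small-degree bookkeeping to the field $\F_4$. Since $n$ is odd, $\gcd(n,2)=1$, so $x^n-1$ is squarefree over $\F_4$ (which has characteristic $2$); hence $T(x)=\frac{x^n-1}{x-1}$ is squarefree and $W(T)=2^{w(T)}$, where $w(T)$ is the number of distinct monic irreducible factors of $T$. So it suffices to show $w(T)\le \frac{n+9}{3}$.

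First I would record the counts of monic irreducibles over $\F_4$ of small degree: there are $4$ of degree $1$ and $(4^2-4)/2=6$ of degree $2$. Next, for $1\le i\le 2$ let $s_i$ be the number of distinct irreducible factors of degree $i$ dividing $T(x)$. The linear factors of $x^n-1$ correspond to $n$-th roots of unity in $\F_4^\ast$, a group of order $3$; removing the factor $x-1$ leaves at most $2$ further linear factors, so $s_1\le 2$ (and $s_1>0$ only when $3\mid n$). Also $s_2\le 6$. Every irreducible factor of $T$ of degree $\ge 3$ contributes at least $3$ to $\deg T=n-1$, so the number of such factors is at most $\frac{(n-1)-s_1-2s_2}{3}$. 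Summing,
\[
w(T)\;\le\;\frac{(n-1)-s_1-2s_2}{3}+s_1+s_2\;=\;\frac{n-1+2s_1+s_2}{3}\;\le\;\frac{n-1+2\cdot 2+6}{3}\;=\;\frac{n+9}{3}.
\]
Therefore $W(T)=2^{w(T)}\le 2^{(n+9)/3}$, as claimed.

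There is no genuine obstacle here; the statement is a routine adaptation of Lemma~\ref{aux}. The only points requiring a little care are (i) justifying that at most $2$ extra linear factors appear when passing from $\F_2$ to $\F_4$ (because $|\F_4^\ast|=3$ rather than $1$), and (ii) confirming the irreducible-polynomial counts over $\F_4$ so that the inequalities $s_1\le 2$, $s_2\le 6$ are tight enough to land exactly on the exponent $\frac{n+9}{3}$.
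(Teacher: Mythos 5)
Your proposal is correct and follows essentially the same argument as the paper: bound $s_1\le 2$ and $s_2\le 6$ by counting the available irreducibles over $\F_4$ of degrees $1$ and $2$ (excluding $x$ and $x-1$), bound the factors of degree $\ge 3$ by $\frac{(n-1)-s_1-2s_2}{3}$, and sum to get $w(T)\le\frac{n+9}{3}$. The only cosmetic difference is that you justify $s_1\le 2$ via $|\F_4^\ast|=3$ while the paper simply counts linear polynomials distinct from $x$ and $x-1$; both yield the same bound.
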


\begin{proof}
For $i=1, 2$, let $s_i$ be the number of irreducible factors of degree $i$ dividing $T(x)$. Since $n$ is odd, $T(x)$ has no repeated irreducible factors and it is not divisible by $x-1$. Clearly $T(x)$ is not divisible by $x$. By a direct verification we see that the number of irreducible polynomials over $\F_4$ of degrees $1$ (different from $x, x-1$) and $2$ are $2$ and $6$, respectively. Hence $s_1\le 2, s_2\le 6$. In particular, the number of irreducible factors of $T(x)$ over $\F_4$ is at most $$\frac{n-1-s_1-2s_2}{3}+s_1+s_2=\frac{n-1+2s_1+s_2}{3}.$$ Since $\frac{n-1+2s_1+s_2}{3}\le \frac{n-1+4+6}{3}=\frac{n+9}{3}$, we conclude the proof.
\end{proof}

\begin{lemma}\textup{\cite[Lemma 7.5]{cohen3}}\label{Cohen} Suppose that $n$ is an odd number. Then $$W(2^n-1)< 2^{\frac{n}{7}+2}.$$\end{lemma}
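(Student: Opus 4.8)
Since $W(2^n-1)=2^{w(2^n-1)}$, where $w(\cdot)$ denotes the number of distinct prime divisors, the assertion is equivalent to $w(2^n-1)<\frac n7+2$, so the plan is to bound the number of distinct primes dividing $2^n-1$ by separating them according to size relative to the threshold $2^7=128$ (the threshold forced by the exponent $n/7$).

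First I would record the restriction that oddness of $n$ places on these primes. If $p\mid 2^n-1$ then $p$ is odd and $d:=\ord_p(2)$ divides $n$, hence is odd; since $d=1$ is impossible ($2^1-1=1$) we get $d\ge 3$, and from $d\mid p-1$ with $p-1$ even and $d$ odd it follows that $p\equiv 1\pmod{2d}$, so $p\ge 2d+1\ge 7$. A finite computation — checking, for each prime $p<128$, whether the multiplicative order of $2$ modulo $p$ is odd — then shows that the only such primes below $128$ are
\[ q_1,\dots,q_{10}\ =\ 7,\ 23,\ 31,\ 47,\ 71,\ 73,\ 79,\ 89,\ 103,\ 127, \]
so every prime factor of $2^n-1$ smaller than $128$ lies in $\{q_1,\dots,q_{10}\}$.

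Next I would do the size split. Let $s$ be the number of prime divisors of $2^n-1$ that are below $128$, and let $\Sigma_s$ be the sum of their base-$2$ logarithms, so that $\Sigma_s\ge L_s:=\log_2(q_1\cdots q_s)$ (the product of $s$ admissible small primes is minimized by the $s$ smallest). The product of all distinct prime divisors of $2^n-1$ divides $2^n-1<2^n$, hence $\sum_{p\mid 2^n-1}\log_2 p<n$; thus the prime divisors $p\ge 128$ contribute at most $n-\Sigma_s$ to this sum, and as each of them is actually $\ge 131$ (so $\log_2 p>7$), their number is strictly less than $(n-\Sigma_s)/7\le (n-L_s)/7$. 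Therefore
\[ w(2^n-1)\;=\;s+\#\{p\mid 2^n-1:\ p\ge 128\}\;<\;\frac n7+\Bigl(s-\frac{L_s}{7}\Bigr), \]
and it remains only to check the numerical inequality $s-\frac{L_s}{7}<2$, i.e.\ $L_s>7(s-2)$, for $0\le s\le 10$: this is trivial for $s\le 2$ and is confirmed for $3\le s\le 10$ by evaluating $L_s$ directly. This gives $w(2^n-1)<\frac n7+2$, as required.

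The only genuine work is in the two finite verifications — identifying the ten admissible small primes $q_1,\dots,q_{10}$, and confirming $L_s>7(s-2)$ for each $s\le 10$ — and I expect the second to be the delicate point: the margin in the final inequality is below $0.1$ at $s=9,10$, so any coarser treatment of the small primes (for instance, simply bounding their number by $10$) would only yield the additive constant $10$ rather than $2$. The rest — the reduction to $w(2^n-1)$, the lower bound $p\ge 7$ on prime factors, and the counting of the ``large'' prime factors via $\prod p<2^n$ — is routine.
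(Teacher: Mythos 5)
Your argument is correct. Note, however, that the paper does not prove this lemma at all: it is imported verbatim from Cohen (\emph{Pairs of primitive elements in fields of even order}, Lemma 7.5), so there is no in-paper proof to compare against. Your proof is a valid self-contained derivation in the standard Cohen style: the reduction to $w(2^n-1)<\tfrac n7+2$, the observation that for odd $n$ every prime $p\mid 2^n-1$ has $d=\ord_p(2)$ odd with $2d\mid p-1$ (hence $p\ge 7$, and the primes below $128$ are exactly those with odd order of $2$, namely $7,23,31,47,71,73,79,89,103,127$), and the small/large split with $\log_2(\prod p)<n$ all check out. I verified the two finite computations: your list of ten admissible primes below $128$ is complete, and $L_s\ge 7(s-2)$ holds for all $0\le s\le 10$, with the tightest margins $L_9\approx 49.6>49$ and $L_{10}\approx 56.6>56$ — so, as you say, the constant $2$ genuinely requires using the sizes of the small primes and not merely their count. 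One cosmetic point: you only need $s-L_s/7\le 2$ (not strictly less), since the strictness of the final bound already comes from $\prod p\le 2^n-1<2^n$ and $\log_2 p>7$ for the large primes; but your stronger check holds anyway.
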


We now introduce some results in arbitrary characteristic. 

\begin{lemma}\label{lem:ramanujan}
Suppose that $q$ is a power of a prime $p$, where $p\ge 5$ or $p=3$ and $q\ge 27$. For $s\ge 1$, we have
\begin{equation}\label{eqn:ramanujan}\theta(q^{ps}-1)^{-1}=\frac{q^{ps}-1}{\varphi(q^{ps}-1)}< 3.6 \log q+1.8\log s.\end{equation}
\end{lemma}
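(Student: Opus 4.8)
The plan is to write $\theta(q^{ps}-1)^{-1}$ as the standard Euler product and control it by a classical effective estimate for $n/\varphi(n)$, then absorb every lower-order term into the main term $3.6\log q$ using the lower bound on $\log q$ furnished by the hypotheses. Put $N = q^{ps}-1$, so that $\theta(N)^{-1} = N/\varphi(N) = \prod_{\ell \mid N}\frac{\ell}{\ell-1}$, the product running over the primes $\ell$ dividing $N$. Since the hypotheses force $N \ge q^{p}-1 \ge \min\{5^{5}-1,\,27^{3}-1\} \ge 3$, the classical Rosser--Schoenfeld bound applies: for every integer $N \ge 3$,
\[ \frac{N}{\varphi(N)} < e^{\gamma}\log\log N + \frac{2.50637}{\log\log N}, \qquad e^{\gamma} = 1.781072\ldots \]

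Next I would estimate $\log\log N$ purely from the size of $N$. From $N < q^{ps}$ we get $\log N < ps\log q$, hence $\log\log N < \log(ps\log q) = \log p + \log s + \log\log q$; and since $q$ is a power of $p$ we have $\log p \le \log q$, so $\log\log N < \log q + \log s + \log\log q$. Feeding this into the displayed estimate and using $e^{\gamma} < 1.782$ together with $1.8 > e^{\gamma}$ (so the term $(1.8-e^{\gamma})\log s$ on the target side is nonnegative and may be discarded), it remains only to verify the elementary inequality
\[ 1.782\,\log\log q + \frac{2.50637}{\log\log N} < (3.6 - 1.782)\,\log q = 1.818\,\log q. \]

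For the final step I would bound the error term $2.50637/\log\log N$ from above using $N \ge q^{p}-1 \ge 5^{5}-1$ when $p \ge 5$ (so $\log\log N > 2.08$ and the term is $< 1.21$), and $N \ge 27^{3}-1$ when $p = 3$ and $q \ge 27$ (so $\log\log N > 2.29$ and the term is $< 1.10$). It then suffices that $g(q) = 1.818\,\log q - 1.782\,\log\log q$ exceed $1.21$, respectively $1.10$, on the relevant range; since $g'(q) = q^{-1}\bigl(1.818 - 1.782/\log q\bigr) > 0$ for $q \ge 5$ while $g(5) > 2.07$ and $g(27) > 3.86$, the inequality holds for all admissible $q$, completing the proof. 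The main obstacle is not analytic but bookkeeping: one must confirm that the stated hypothesis ($p \ge 5$, or $p = 3$ with $q \ge 27$) is exactly strong enough for $3.6\log q$ to dominate both the $e^{\gamma}\log\log q$ correction and the Rosser--Schoenfeld remainder simultaneously; for $q = 3$ or $q = 9$ this margin collapses, which is precisely why those cases are excluded here. Notably, no information about the factorization of $q^{ps}-1$ is needed --- only its size.
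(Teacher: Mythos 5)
Your argument is correct and follows essentially the same route as the paper: both rest on the effective bound $N/\varphi(N)\le e^{\gamma}\log\log N + C/\log\log N$, the size estimate $\log\log(q^{ps}-1)<\log q+\log s+\log\log q$, and the lower bound $q^{ps}-1\ge\min\{5^5-1,\,27^3-1\}$ to control the remainder term (the paper simply takes $C=3$, replaces $\log\log q$ by $\log q-1$ via $e^x\ge 1+x$, and multiplies through by $1.8>e^{\gamma}$, avoiding your final monotonicity check). One pedantic caveat: the Rosser--Schoenfeld inequality with $C=2.50637$ is not valid for \emph{every} $N\ge 3$ --- it has the single exception $N=223092870$ --- but that integer is not of the form $q^{ps}-1$ with $ps\ge 3$, and your numerical margins comfortably survive the unconditional constant $C=3$ anyway, so nothing breaks.
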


\begin{proof}
It is well known that $$\frac{n}{\varphi(n)}\le e^{\gamma}\log\log n+\frac{3}{\log\log n},$$ for all $n\ge 3$, where $\gamma$ is the Euler constant and $1.7<e^{\gamma}<1.8$. Also, since $e^x\ge 1+x$ for any $x\ge 0$, we have $\log \log q\le \log q -1$. Hence $\log\log (q^{ps}-1)<\log s+\log p+\log\log q \le \log s+ 2\log q-1$.

By the hypothesis, $\log \log (q^{ps}-1)\ge \min\{\log\log (5^5-1), \log\log (27^3-1)\} >2$, hence $\frac{3}{\log\log (q^{ps}-1)}\le 1.5$ and so we get the following:
\[\theta(q^{ps}-1)^{-1}\le 1.8(\log s+ 2\log q-1)+1.5< 3.6 \log q+1.8\log s. \qedhere \]
\end{proof}

\begin{lemma}\label{numthe}
Let $q$ be a power of an odd prime $p$ and $s$ a positive integer such that $\gcd(p, s)=1$. Then any prime divisor $r$ of $\frac{q^{ps}-1}{q^s-1}$ satisfies $r\equiv1 {\pmod {2p}}$.
\end{lemma}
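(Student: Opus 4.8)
The plan is to use the standard theory of cyclotomic-like orders. Let $r$ be a prime dividing $\frac{q^{ps}-1}{q^s-1}$; then $r \mid q^{ps}-1$, so $r \nmid q$ and $q$ has a well-defined multiplicative order $d = \ord_r(q)$ in $(\mathbb{Z}/r\mathbb{Z})^*$. From $r \mid q^{ps}-1$ we get $d \mid ps$, and I will argue that $d \nmid s$: if $d \mid s$ then $r \mid q^s-1$, and I would then show $r^2 \nmid q^{ps}-1$ (hence $r$ cannot survive in the quotient $\frac{q^{ps}-1}{q^s-1}$) by the usual lifting-the-exponent argument, since $q^{ps}-1 = (q^s-1)\bigl(1 + q^s + q^{2s} + \cdots + q^{(p-1)s}\bigr)$ and the second factor is $\equiv p \pmod r$; as $r \mid q^s - 1$ forces $r \ne p$ (because $\gcd(p,s)=1$ and $p \mid q$ would give $p \nmid q^s-1$), the second factor is a unit mod $r$. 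Wait — more carefully, I should treat the two cases by whether $r = p$ or not: $r = p$ is impossible since $p \mid q$ implies $p \nmid q^{ps}-1$. So $r \ne p$, and the divisibility $r \mid \frac{q^{ps}-1}{q^s-1}$ together with $\frac{q^{ps}-1}{q^s-1} \equiv p \not\equiv 0 \pmod r$ when $r \mid q^s-1$ gives a contradiction; therefore $d \nmid s$.

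Now $d \mid ps$ but $d \nmid s$ forces $p \mid d$ (since $\gcd(p,s)=1$, the divisors of $ps$ not dividing $s$ are exactly the multiples of $p$ dividing $ps$). Finally, $d = \ord_r(q)$ divides $r-1$ by Fermat's little theorem, so $p \mid r-1$, i.e. $r \equiv 1 \pmod p$. To upgrade this to $r \equiv 1 \pmod{2p}$, I use that $q$ is odd: then $q^{ps}-1$ is even, but more to the point $r-1$ is even because $r$ is an odd prime (here $r$ is odd since $r \mid \frac{q^{ps}-1}{q^s-1}$ and, as $q$ is odd, $q^s \equiv 1 \pmod 2$ makes $2 \mid q^s-1$ absorb all the $2$-adic valuation — I would check $v_2\bigl(\frac{q^{ps}-1}{q^s-1}\bigr) = 0$ since $p$ is odd, so $r = 2$ is excluded). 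With $r$ odd, $2 \mid r-1$; combined with $p \mid r-1$ and $p$ odd, we get $2p \mid r-1$, which is the claim.

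The main obstacle is the bookkeeping in showing $r^2 \nmid q^{ps}-1$ when $r \mid q^s-1$ — equivalently that the "cofactor" $1 + q^s + \cdots + q^{(p-1)s}$ is coprime to $q^s-1$ except possibly at $p$, and then ruling out $r = p$. This is a clean application of the factorization $q^{ps}-1 = (q^s-1)\sum_{i=0}^{p-1} q^{is}$ plus the congruence $\sum_{i=0}^{p-1} q^{is} \equiv p \pmod{q^s-1}$, so it is routine but must be stated precisely; everything else (orders dividing $ps$, $p \mid d$, $d \mid r-1$, and the parity step using $q$ odd and $p$ odd) is immediate.
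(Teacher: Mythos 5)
Your proposal is correct and follows essentially the same route as the paper: show $r\nmid q^s-1$ (via $\frac{q^{ps}-1}{q^s-1}\equiv p\pmod{q^s-1}$ and $r\neq p$), deduce $p\mid\ord_r(q)\mid r-1$ from $\gcd(p,s)=1$, and then use oddness of $r$ and of $p$ to upgrade to $2p\mid r-1$. The only cosmetic differences are that the paper phrases the first step as $\gcd\bigl(\frac{q^{ps}-1}{q^s-1},\,q^s-1\bigr)=\gcd(p,q^s-1)=1$, and it gets $r>2$ directly from $r-1=dp$ with $p\ge 3$ rather than from the $2$-adic valuation of the quotient; your initial detour through ``$r^2\nmid q^{ps}-1$'' is unnecessary but you correctly replace it with the right statement.
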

\begin{proof}
First, notice that $$\gcd\left (\frac{q^{ps}-1}{q^s-1}, q^s-1\right)=\gcd(p, q^s-1)=1.$$ Let $r$ be any prime divisor of $\frac{q^{ps}-1}{q^s-1}$. In particular, $r$ divides $q^{ps}-1$ and does not divide $q^s-1$. Hence, if $l=\ord_r q$ we know that $l$ divides $ps$ but does not divide $s$ and then it follows that $l$ is divisible by $p$. We know that $l$ divides $\varphi(r)=r-1$ and then $p$ divides $r-1$. Therefore $r-1=dp$ for some positive integer $d$. But, since $p\ge 3$ is odd, we have $r>2$ and then $r$ is an odd prime number. Thus $d$ is even and then $r\equiv 1\pmod {2p}$.
\end{proof}

In a similar proof of the previous Lemma, we also obtain the following.

\begin{lemma}\label{five}
Let $q$ be a power of a prime $p$. Then any prime $r$ dividing $\frac{q^5-1}{q-1}$ such that $r$ does not divide $q-1$, satisfies $r\equiv 1\pmod {10}$. 
\end{lemma}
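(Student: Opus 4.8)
The plan is to mimic the argument used for Lemma~\ref{numthe}, the only new ingredient being a parity check to upgrade a congruence modulo $5$ to a congruence modulo $10$. First I would note that since $r$ divides $\frac{q^5-1}{q-1}$, it divides $q^5-1$, so $r \nmid q$ and the multiplicative order $l = \ord_r(q)$ is well defined; moreover $l$ divides $5$. Because $r$ does not divide $q-1 = q^1-1$, we have $l \neq 1$, and since $5$ is prime this forces $l = 5$. Then $l$ divides $\varphi(r) = r-1$, so $5$ divides $r-1$, i.e. $r \equiv 1 \pmod 5$.

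It remains to rule out $r=2$ and conclude. If $r = 2$, then $2$ divides $q^5-1$, so $q$ is odd, so $q-1$ is even, i.e. $2$ divides $q-1$, contradicting the hypothesis that $r \nmid q-1$. Hence $r$ is an odd prime, so $r-1$ is even; combined with $5 \mid r-1$ this gives $10 \mid r-1$, that is, $r \equiv 1 \pmod{10}$.

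The argument is entirely elementary and I do not anticipate any real obstacle. The one point worth a moment's care is that, unlike in Lemma~\ref{numthe}, we do not need the coprimality remark $\gcd(p, q^s-1)=1$ here: the hypothesis supplies directly that $r$ does not divide $q-1$, which is exactly what makes $l\neq 1$.
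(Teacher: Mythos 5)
Your proof is correct and follows exactly the route the paper intends: the paper gives no separate argument for this lemma, stating only that it follows ``in a similar proof of the previous Lemma'' (Lemma~\ref{numthe}), and your adaptation --- order $l=\ord_r(q)$ divides $5$, the hypothesis $r\nmid q-1$ forces $l=5$, hence $5\mid r-1$, and a parity check upgrades this to $10\mid r-1$ --- is precisely that adaptation. Your observation that the hypothesis $r\nmid q-1$ replaces the coprimality step of Lemma~\ref{numthe} is the right way to see why the statement is phrased with that extra condition.
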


Lemmas \ref{numthe} and \ref{five} show that the primes dividing the numbers $\frac{q^a-1}{q-1}$ (for specific values of $a$) are not too small. In particular, we can obtain effective bounds for expressions of the form $W\left(\frac{q^a-1}{q-1}\right)$. We exemplify this in the following two results.

\begin{lemma}\label{estimation5}
Let $q$ be a power of a prime such that $q\ge 2^{17}$. Then 
$$16\cdot W(q^5-1)<q^{1.5}.$$
\end{lemma}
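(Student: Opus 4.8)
The plan is to bound $W(q^5-1)$ by splitting $q^5-1$ into the two coprime factors $q-1$ and $\frac{q^5-1}{q-1}$, so that $W(q^5-1) = W(q-1)\cdot W\!\left(\frac{q^5-1}{q-1}\right)$. For the first factor I would use the trivial estimate $W(q-1) \le d(q-1) < 2\sqrt{q-1} < 2q^{1/2}$. For the second factor I would exploit Lemma~\ref{five}: every prime $r$ dividing $\frac{q^5-1}{q-1}$ either divides $q-1$ or satisfies $r \equiv 1 \pmod{10}$, hence $r \ge 11$. The product of the first $k$ primes that are $\equiv 1 \pmod{10}$ (namely $11, 31, 41, 61, 71, 101, \ldots$) grows fast enough that $\frac{q^5-1}{q-1} < q^4$ forces $w\!\left(\frac{q^5-1}{q-1}\right)$ to be small; more precisely, since each such prime divisor is at least $11$, we get $11^{\,w(\,\cdot\,)} \le \frac{q^5-1}{q-1} < q^4$, whence $w\!\left(\frac{q^5-1}{q-1}\right) < \frac{4\log q}{\log 11}$ and $W\!\left(\frac{q^5-1}{q-1}\right) < q^{4\log 2/\log 11} < q^{0.834}$.

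Combining the two estimates gives $16\cdot W(q^5-1) < 16 \cdot 2q^{1/2} \cdot q^{0.834} = 32\, q^{1.334}$, which is already close but not quite below $q^{1.5}$; so the crude bound $r \ge 11$ is not sufficient on its own, and the main work will be to sharpen it. The sharper approach is to account more carefully for which primes can actually occur: the prime divisors of $\frac{q^5-1}{q-1}$ not dividing $q-1$ are drawn from $\{11, 31, 41, 61, 71, 101, \ldots\}$, and using the actual product of the first few of these (rather than $11^k$) yields a genuinely better exponent. One also notes that $\frac{q^5-1}{q-1} = q^4 + q^3 + q^2 + q + 1 < q^4(1 + q^{-1} + \cdots) < \frac{q^5}{q-1}$, and for $q \ge 2^{17}$ this is comfortably below, say, $1.001\,q^4$. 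Running the "product of smallest admissible primes exceeds $\frac{q^5-1}{q-1}$" argument with the true list $11 \cdot 31 \cdot 41 \cdots$ gives, for each fixed bound on $w$, a threshold on $q$; since $q \ge 2^{17}$ is large, $w\!\left(\frac{q^5-1}{q-1}\right)$ is forced down to a value for which $16 \cdot 2q^{1/2} \cdot 2^{\,w} < q^{1.5}$ holds, i.e. $2^{\,w+5} < q$, which is satisfied once $w$ is small relative to $17$.

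The step I expect to be the main obstacle is getting the exponent on $W\!\left(\frac{q^5-1}{q-1}\right)$ strictly below $1$ (indeed below $1 - \log 32/\log q$, which is essentially $1$ for $q \ge 2^{17}$) using only the congruence constraint $r \equiv 1 \pmod{10}$ together with $r \ge 11$. The resolution is a two-part case analysis on the size of $q$: for $q$ extremely large the Nicolas–Robin type bound $d(q^5-1) < (q^5-1)^{1.066/\log\log(q^5-1)}$ (Lemma~\ref{divisor}) already beats $q^{1.5}$ directly without touching the factorization, while for $2^{17} \le q \le Q_0$ (some explicit cutoff) the product-of-admissible-primes argument pins down $w\!\left(\frac{q^5-1}{q-1}\right) \le w_0$ for an explicit small $w_0$, and then $16 \cdot W(q-1) \cdot 2^{w_0} < 2^{w_0+5}\sqrt{q-1} < q^{1.5}$ is a finite numeric check. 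Stitching these two regimes together, with the crossover chosen so that both bounds are valid there, completes the proof.
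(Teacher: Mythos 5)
Your overall strategy matches the paper's: split off $q-1$ with the trivial bound $W(q-1)\le d(q-1)<2q^{1/2}$, use Lemma~\ref{five} to force the remaining prime divisors into the class $1\pmod{10}$, and bound their number $d$ by comparing $\frac{q^5-1}{q-1}<1.01\,q^4$ against the product of the $d$ smallest admissible primes. The paper closes the argument uniformly in $q$: from $r_i\ge 10i+1$ it gets $q_0>10^d\,d!$, and for $d\ge 13$ the induction $d!\ge 1.01\cdot 2^{2.5d}$ gives $2^d<q^{4/5.82}<q^{0.69}$, hence $16\,W(q^5-1)<32\,q^{1.19}<q^{1.5}$, while for $d\le 12$ it is simply $16\cdot 2^{12}\cdot 2\sqrt q=2^{17}\sqrt q\le q^{1.5}$. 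You instead propose Nicolas--Robin (Lemma~\ref{divisor}) for $q$ above an explicit cutoff and the product-of-admissible-primes count only in a bounded window; this also works (a cutoff near $2^{21}$ suffices: below it, $11\cdot 31\cdot 41\cdots 251\approx 2^{85.5}$ already exceeds $1.01\,q^4$, forcing $d\le 12$, and above it Lemma~\ref{divisor} gives $16\,d(q^5-1)<q^{1.5}$ directly), but it is slightly more cumbersome because the crossover must be checked at both ends, whereas the factorial bound needs no such stitching. Two slips you should repair. First, $q-1$ and $\frac{q^5-1}{q-1}$ are not coprime in general --- their gcd is $\gcd(5,q-1)$ --- so replace the claimed equality by the submultiplicative inequality and, as the paper does, work with the largest divisor $q_0$ of $\frac{q^5-1}{q-1}$ that is coprime to $q-1$, since Lemma~\ref{five} only constrains primes not dividing $q-1$. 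Second, your first-pass exponent is miscomputed: $11^{w}\le q^4$ yields $2^{w}\le q^{4\log 2/\log 11}=q^{1.156\ldots}$, not $q^{0.834}$, so the crude bound $r\ge 11$ gives roughly $32\,q^{1.66}$ and fails outright rather than ``narrowly''; the factorial growth of the product of \emph{distinct} admissible primes is therefore not an optional refinement but the indispensable step, exactly as your final plan (and the paper's proof) in fact uses it.
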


\begin{proof}
Let $q_0$ be the greatest divisor of $\frac{q^5-1}{q-1}$ that is relatively prime to $q-1$. Clearly $W(q^5-1)\le W(q_0)W(q-1)$ and $q_0\le \frac{q^5-1}{q-1}=q^4+q^3+q^2+q+1<1.01q^4$ for $q\ge 2^{17}$.

Let $r_1<r_2<\cdots<r_d$ be the list of distinct primes dividing $q_0$. From Lemma \ref{five} we know that $r_i\ge 10i+1$, hence 
$$q_0\ge r_1\cdots r_d\ge 10^d\cdot d!+1>10^d\cdot d!.$$

We first suppose that $d\ge 13$. It follows from induction that, for $d\ge 13$, the inequality $d!\ge 1.01\cdot 2^{2.5d}$ holds and we get 
$$q^4>\frac{q_0}{1.01}>\frac{10^d\cdot d!}{1.01}\ge (10\cdot 2^{2.5})^d>2^{5.82d},$$
hence $W(q_0)=2^d<q^{\frac{4}{5.82}}<q^{0.69}$. We have the trivial bound $W(q-1)\le 2\sqrt{q}$ and then
$$16\cdot W(q^5-1)\le 16\cdot W(q_0)W(q-1)<32\cdot q^{1.19}<q^{1.5},$$   
since $q\ge 2^{17}$. For $d\le 12$, we have $W(q_0)\le 2^{12}$ and, since $W(q-1)\le 2\sqrt{q}$, we get
$$16\cdot W(q^5-1)\le 16\cdot W(q_0)W(q-1)\le 2^{17}\cdot \sqrt q\le q^{1.5},$$
since $q\ge 2^{17}$.
\end{proof}

We use some similar ideas from the proof of the previous Lemma and obtain general effective bounds for quantities $W\left(\frac{q^a-1}{q-1}\right)$.

\begin{proposition}\label{mainestimation}
Let $q$ be a power of an odd prime $p$ and $s$ be a positive integer such that $\gcd(p, s)=1$. 
\begin{enumerate}
\item If $p\ge 5$, with the exception of the case $(p, q, s)=(5, 5, 1)$, the following holds $$W\left(\frac{q^{ps}-1}{q^s-1}\right)\le q^{\frac{(p-1)s}{2+\log_2 p}}.$$
\item For $s\ge 6$ and $\gcd(s, 3)=1$,  $$W\left(\frac{3^{3s}-1}{3^s-1}\right)\le 3^{\frac{2s}{3}}.$$
\item For $p=3$ and $q\ge 3^5$,
$$W\left(\frac{q^{6}-1}{q^2-1}\right)\le q^{0.92}.$$
\end{enumerate}
\end{proposition}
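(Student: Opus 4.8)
All three estimates concern the single integer $M := \frac{q^{ps}-1}{q^s-1} = 1 + q^s + \cdots + q^{(p-1)s}$ (with $s=2$, $p=3$ in part~(3)), and I would prove them by the mechanism already used in Lemma~\ref{estimation5}. The key preliminary observation strengthens Lemma~\ref{numthe}: since $q$ is a power of $p$ we have $M \equiv p \pmod{q^s-1}$ and $q^s-1 \equiv -1 \pmod p$, so $\gcd(M, q^s-1) = \gcd(p, q^s-1) = 1$; hence \emph{every} prime $r \mid M$ (not just those coprime to $q^s-1$) satisfies $r \equiv 1 \pmod{2p}$ — and $r \equiv 1 \pmod 6$ in part~(3).

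Writing $r_1 < \cdots < r_d$ for the distinct primes dividing $M$, the congruence forces $r_i \ge 2pi+1$, so
\[ M \ \ge\ \prod_{i=1}^{d}(2pi+1)\ >\ (2p)^{d}\, d!. \]
Against this I would use the trivial upper bound from the geometric sum, $M < q^{(p-1)s}\cdot \frac{q^s}{q^s-1} \le c_0\, q^{(p-1)s}$, with $c_0$ an explicit constant marginally larger than $1$ obtained from the smallest admissible value of $q^s$. Combining the two gives $(2p)^d d! < c_0\, q^{(p-1)s}$, an inequality that bounds $d = w(M)$, hence $W(M) = 2^{d}$, in terms of $q^{(p-1)s}$. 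Since the claimed bound $W(M) \le q^{(p-1)s/(2+\log_2 p)}$ is equivalent to $q^{(p-1)s} \ge 2^{d(2+\log_2 p)} = (4p)^{d}$, it suffices — by the factorial lower bound — to have $d! \ge c_0\, 2^{d}$, and this holds once $d$ exceeds a small explicit threshold $d_0$ (for instance $d_0 = 4$ in part~(1); the analogous thresholds in parts~(2) and~(3) are found the same way, with part~(2) arranged so that $d_0 = 2$ suffices because $s \ge 6$ already forces $q^{2s/3} \ge 3^{4}$).

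It then remains to treat the finitely many configurations with $w(M) = d \le d_0-1$: there $W(M) \le 2^{d_0-1}$ is an absolute constant, and one checks that the right-hand side $q^{(p-1)s/(2+\log_2 p)}$ already exceeds it except for a short list of small pairs $(q,s)$ with $q$ a power of $p$, each of which is settled by explicitly factoring $M$. The real content of the proposition is therefore this final bookkeeping: pinning down $c_0$ and $d_0$ so that the residual list stays short, and verifying it — this is exactly where the single exceptional triple $(p,q,s)=(5,5,1)$ in part~(1) is isolated, and where the restrictions $s \ge 6$ in part~(2) and $q \ge 3^{5}$ in part~(3) (together with the convenient factorization $\tfrac{q^6-1}{q^2-1} = (q^2-q+1)(q^2+q+1)$ if one wants a sharper count of $w(M)$) originate. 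I expect no serious obstacle beyond this careful case management; the one mildly delicate point is keeping the constants tight enough — e.g. using $W(q-1) \le d(q-1) < 2\sqrt q$ and the geometric-series bound rather than cruder estimates — so that the only leftover exception is the one recorded in the statement.
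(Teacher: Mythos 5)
Your proposal is correct and follows essentially the same route as the paper: Lemma~\ref{numthe} forces every prime divisor of $\frac{q^{ps}-1}{q^s-1}$ to be $\equiv 1 \pmod{2p}$, the product of the $d$ distinct primes is bounded below by $(2p)^d d!$, the quantity itself is bounded above by (a constant times) $q^{(p-1)s}$, and a dichotomy on $d$ (factorial growth for $d$ large, an absolute bound on $W$ for $d$ small) isolates the finitely many exceptions, including $(5,5,1)$. Your sharper geometric-series constant $c_0$ in place of the paper's $\tfrac{3}{2}$ (resp. $2$, $\tfrac{100}{81}$) only shifts the thresholds $d_0$ slightly; the substance is identical.
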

\begin{proof}
We analyze the items in the statement by cases. 

\paragraph{Case 1. $q \geq 5$} 
Let $r_1<\cdots<r_d$ be the list of primes dividing $\frac{q^{ps}-1}{q^s-1}$. It follows from Lemma \ref{numthe} that $r_i\ge 2ip+1>2ip$. Hence,
\begin{equation}\label{primes}\frac{q^{ps}-1}{q^s-1}\ge r_1\dots r_d> d!2^dp^d.\end{equation}
We divide in two cases.

\paragraph{Case 1a. $d\le 3$} 
In this case, $W\left(\frac{q^{ps}-1}{q^s-1}\right)\le 8$.

If $p=5$ we have 
$$q^{\frac{(5-1)s}{2+\log_2 5}}>q^{0.92s}>8$$
with the exception of the case $q=5, s=1$.

If $p\ge 7$, note that $f(x)=\frac{x-1}{2+\log_2 x}$ is an increasing function for $x\ge 2$. Since $f(7)>1.24$ we have
 $$q^{\frac{(p-1)s}{2+\log_2 p}}>q^{1.24 s}>8$$
for all $q$ a power of a prime $p\ge 7$.

\paragraph{Case 1b. $d\ge 4$}
In this case, it follows by induction that $d!\ge \frac{3}{2}\cdot 2^d$. From Inequality \eqref{primes} we obtain the following:
$$\frac{q^{ps}-1}{q^s-1}>\frac{3}{2}(4p)^d=\frac{3}{2}\cdot 2^{d(2+\log_2 p)}.$$
To finish the proof, just notice that $W\left(\frac{q^{ps}-1}{q^s-1}\right)=2^d$ and $$\frac{q^{ps}-1}{q^s-1}=q^{(p-1)s}+\frac{q^{(p-1)s}-1}{q^s-1}\le q^{(p-1)s}+2q^{(p-2)s}\le \frac{3}{2}q^{(p-1)s},$$
for $q\ge 5$.

\paragraph{Case 2. $s \geq 6$ and $\gcd(s,3) = 1$} 
Suppose that $W\left(\frac{3^{3s}-1}{3^s-1}\right)=2^d$. Similarly as before, we obtain
\begin{equation}\label{primes1}\frac{3^{3s}-1}{3^s-1}\ge r_1\dots r_d> 6^d\cdot d!.\end{equation}

If $d\le 6$, we have $W\left(\frac{3^{3s}-1}{3^s-1}\right)\le 64< 81\le 3^{\frac{2s}{3}}$ for $s\ge 6$. For $d\ge 7$, it follows by induction that $d!>\left(\frac{10}{3}\right)^d$ and $20^d>2\cdot 2^{3d}$. Hence, from Inequality \eqref{primes1} we obtain
$$2\cdot 2^{3d}<20^d=6^d\cdot \left(\frac{10}{3}\right)^d<6^d\cdot d!<\frac{3^{3s}-1}{3^s-1}.$$
To finish the proof, just notice that $\frac{3^{3s}-1}{3^s-1}< 2\cdot 3^{2s}$.

\paragraph{Case 3. $p = 3$ and $q \geq 3^5$} 
Suppose that $W\left(\frac{q^{6}-1}{q^2-1}\right)=2^d$. If $d\le 7$, we have $W\left(\frac{q^{6}-1}{q^2-1}\right)\le 128< 243^{0.92}\le q^{0.92}$. For $d\ge 8$, it follows by induction that $d!> \frac{100}{81}(3.5)^d$. Also, notice that
$$\frac{q^{6}-1}{q^2-1}=q^4+q^2+1<q^4(1+q^{-2})^2<\frac{100}{81}q^{4},$$
since $q>9$.
Similarly to before, we obtain 
$$6^d\cdot d!<\frac{q^{6}-1}{q^2-1}.$$
By a simple calculation we conclude that, for $d\ge 8$ we have
$$\frac{100}{81}21^d\le \frac{100}{81}q^4,$$ 
hence $2^d\le q^{\frac{4}{\log_2 21}}<q^{0.92}$ and we are done.
\end{proof}

We finally present a general (but not sharp) bound for the number $d(m)$, which is fair for sufficiently large $m$.

\begin{lemma}\label{divisor} If $d(m)$ denotes the number of divisors of $m$, then for all $m\ge 3$,
$$d(m)\le m^{\frac{1.5379 \log 2}{\log\log m}}<m^{\frac{1.066}{\log\log m}}<m^{\frac{1.1}{\log\log m}}.$$
\end{lemma}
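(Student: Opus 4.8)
The plan is to treat the first inequality as the substantive content and the remaining two as immediate numerical consequences. The bound $d(m) \le m^{1.5379\log 2/\log\log m}$ is precisely the explicit estimate of Nicolas and Robin~\cite{divisor}, so the cleanest route is to invoke that reference directly; for completeness I would also indicate why a bound of this shape must hold and where the sharp constant $\log 2$ originates.

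First I would write $m = \prod_{i=1}^{k} p_i^{a_i}$ in its prime factorization, so that $d(m) = \prod_i (a_i+1)$ and $\log d(m) = \sum_i \log(a_i+1)$. The governing observation is that $\log(a+1) \le a\log 2$ for every integer $a \ge 1$, since $a+1 \le 2^a$ (equivalently $\log(a+1)/a$ is maximized at $a=1$). I would then fix a threshold $y$ and split the primes dividing $m$ into those with $p_i \le y$ and those with $p_i > y$. For the large primes, $\log(a_i+1) \le a_i\log 2 = (\log 2/\log p_i)\,a_i\log p_i \le (\log 2/\log y)\,a_i\log p_i$, so their total contribution to $\log d(m)$ is at most $(\log 2/\log y)\log m$. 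For the small primes there are at most $\pi(y)$ of them, and each satisfies $a_i \le \log m/\log 2$, so $\log(a_i+1) \le \log\log m + O(1)$; their total contribution is at most $\pi(y)(\log\log m + O(1))$.

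Choosing $y$ so that the two contributions balance — morally $y \approx \log m$ — yields $\log d(m) \le (\log 2 + o(1))\log m/\log\log m$, the asymptotically sharp shape, extremized by primorials $\prod_{p\le y} p$. The hard part is not this heuristic but the passage to a \emph{fully explicit} constant valid uniformly for all $m \ge 3$: one must replace every $O(\cdot)$ term and every appeal to the prime number theorem by explicit Chebyshev-type bounds, optimize the threshold with these in hand, and then dispose of small $m$ by a finite check. This is exactly the optimization carried out in~\cite{divisor}, which is why I would quote their constant $1.5379\log 2$ rather than reprove it.

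Finally, the two trailing inequalities require no number theory. Since $\log 2 < 0.69315$ we have $1.5379\log 2 < 1.066 < 1.1$, and for $m \ge 3$ we have $\log\log m > 0$ together with $m > 1$; hence raising $m$ to the strictly increasing exponents $\frac{1.5379\log 2}{\log\log m} < \frac{1.066}{\log\log m} < \frac{1.1}{\log\log m}$ preserves the inequalities, completing the chain.
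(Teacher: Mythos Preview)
Your proposal is correct and takes essentially the same approach as the paper: the paper's proof is the single sentence ``This inequality is a direct consequence of the result in~\cite{divisor},'' so both you and the authors simply invoke Nicolas--Robin for the first bound, with the remaining two inequalities being the trivial numerical observation $1.5379\log 2 < 1.066 < 1.1$ together with $\log\log m > 0$ for $m\ge 3$. Your additional sketch of the threshold argument and the origin of the constant $\log 2$ is extra exposition the paper does not include, but it is accurate and does not diverge from the paper's route.
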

\begin{proof} 
This inequality is a direct consequence of the result in \cite{divisor}.
\end{proof}

Combining Lemma \ref{divisor} and some previous ideas, we find general bounds for the numbers $W(q^s+1)$ in the case that $q$ is even.

\begin{lemma}\label{even}
\begin{enumerate}
\item Suppose that $q\ge 8$ is a power of two and $s\ge 7$ is odd. If either $s\ge 11$ or $q\ge 32$ we have
$$W(q^{s}+1)<q^{0.352(s+0.05)}.$$
\item For $s\ge 17$ odd,
$$W(4^s+1)<4^{\frac{s}{4.05}}.$$

\end{enumerate}
\end{lemma}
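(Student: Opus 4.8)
The statement to prove is Lemma~\ref{even}, which bounds $W(q^s+1)$ for $q$ a power of two and $s$ odd. I outline the plan below.

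\medskip

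\textbf{Overall strategy.} The plan is to mimic the arithmetic-progression arguments used in Lemma~\ref{numthe}, Lemma~\ref{five} and Proposition~\ref{mainestimation}: first establish that every prime divisor of $q^s+1$ lies in a restricted residue class (so the smallest possible primes are forced to be large), then convert the resulting lower bound on $q^s+1$ in terms of $d$ (the number of distinct prime divisors) into an upper bound on $W(q^s+1)=2^d$. For the ``large'' ranges of $(q,s)$ one gets the bound cleanly; for the few remaining small cases one falls back on the generic estimate $W(m)\le d(m)\le m^{1.1/\log\log m}$ from Lemma~\ref{divisor}.

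\medskip

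\textbf{Key steps for part (1).} First I would note that if $q=2^a$ then $q^s+1=2^{as}+1$, and since $s$ is odd, $as$ is forced to share the $2$-adic valuation of $a$; in any case $q^s+1$ is odd, so $2$ is not a factor. Next, for an odd prime $r\mid q^s+1$ we have $q^{2s}\equiv 1\pmod r$ but $q^s\equiv -1\not\equiv 1\pmod r$, so $\ord_r(q)\mid 2s$ but $\ord_r(q)\nmid s$; writing $2s=2\cdot(\text{odd})$, this forces $\ord_r(q)$ to be even, hence $\ord_r(q)=2s'$ for some $s'\mid s$. In particular $2\mid \ord_r(q)\mid r-1$, and since $q$ is a power of $2$ and $r$ is odd, $q$ is actually a square modulo $r$ precisely when $a$ is even or $2\mid$ the relevant exponent — more simply, one argues $r\equiv 1\pmod{?}$ by combining $\ord_r(q)\mid r-1$ with the evenness of $\ord_r(q)$, and (for $q$ a power of $2$) a quadratic-residue refinement pushing $r$ into the class $\pm 1\pmod 8$ or similar. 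The upshot is a lower bound $r_i\ge c\cdot i$ for the $i$-th smallest prime divisor with an explicit constant $c$, whence $q^s+1\ge r_1\cdots r_d> c^d\, d!$. Using $d!>(c')^d$ for $d$ beyond a threshold (by an easy induction, as done repeatedly in Proposition~\ref{mainestimation}), this gives $q^s+1> (cc')^d$, hence $2^d=W(q^s+1)< (q^s+1)^{\log 2/\log(cc')}$, and one checks the exponent is below $0.352(s+0.05)/\log_q 2 \cdot$-type threshold; for small $d$ one uses $W\le 2^d\le$ an absolute constant and compares directly against $q^{0.352(s+0.05)}$, which is large once $s\ge 11$ or $q\ge 32$. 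The genuinely small leftover pairs $(q,s)\in\{(8,7),(8,9),(16,7),(16,9)\}$ are checked directly.

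\medskip

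\textbf{Part (2) and the main obstacle.} For part (2), with $q=4$ the progression constraint on primes dividing $4^s+1=2^{2s}+1$ is weaker (the prime $2$ contributes nothing and the residue class constraint only gives $r\equiv 1\pmod 4$ roughly), so the prime-counting lower bound $4^s+1\ge r_1\cdots r_d$ with $r_i\ge 4i+1$ is less generous; I would combine this with Lemma~\ref{divisor} for the range where $d$ is small (giving $W(4^s+1)\le d(4^s+1)< (4^s+1)^{1.1/\log\log(4^s+1)}$ and checking $1.1/\log\log(4^s+1)<1/4.05$ once $s$ is large enough, namely $s\ge 17$), and for larger $d$ use the $d!$-growth argument to beat $4^{s/4.05}$. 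I expect the main obstacle to be pinning down the exact residue class for the prime divisors sharply enough — the quadratic-reciprocity refinement that moves $r$ from $\equiv 1\pmod 2$ to a class modulo $4$ or $8$ — and then tuning the induction thresholds so that the ``small $d$'' and ``large $d$'' regimes overlap and jointly cover all $s$ above the stated bound, leaving only a short explicit finite check for the boundary values of $s$ and $q$.
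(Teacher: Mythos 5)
Your plan for part (2) has a genuine numerical gap in the small-$d$ regime. You propose to cover the case of few prime divisors by invoking Lemma~\ref{divisor} and "checking $1.1/\log\log(4^s+1)<1/4.05$ once $s\ge 17$." This check fails: at $s=17$ one has $\log(4^{17}+1)\approx 34\log 2\approx 23.6$, so $\log\log(4^{17}+1)\approx 3.16$ and $1.1/\log\log(4^s+1)\approx 0.35$, whereas $1/4.05\approx 0.247$. The divisor bound therefore gives only $W(4^s+1)\lesssim (4^s)^{0.35}$, which is far weaker than the target $4^{s/4.05}=(4^s)^{0.247}$; the inequality you need does not hold until roughly $s>62$. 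The correct way to handle small $d$ (and what the paper does) is the trivial bound: if $d\le 8$ then $W(4^s+1)=2^d\le 256<4^{17/4.05}\le 4^{s/4.05}$ for $s\ge 17$, and for $d\ge 9$ the residue-class argument you correctly identify ($(2^s)^2\equiv -1\pmod r$ forces $r\equiv 1\pmod 4$, so $p_i\ge 4i+1$ and $4^s>4^d\,d!>2^{4.05d}$ via $d!>2^{2.05d}$) finishes the proof. As written, your two regimes do not jointly cover $17\le s\le 62$.

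For part (1) you take a genuinely different and considerably harder route than the paper. The paper's proof is three lines: $W(q^s+1)\le d(q^s+1)<(q^s+1)^{1.1/\log\log(q^s+1)}$, together with $q^s+1<q^{s+0.05}$ (valid since $q^{0.05}>1.1$ for $q\ge 8$) and $q^s+1\ge\min\{8^{11}+1,32^7+1\}=2^{33}+1$, which gives the exponent $1.1/\log\log(2^{33}+1)<0.352$ directly; no residue-class information is used. Your proposed arithmetic-progression refinement is problematic for $q=2^a$ with $a$ odd (e.g.\ $q=8,32$): there $q^s=2^{as}$ with $as$ odd, the order argument only yields $2\mid\ord_r(2)$, and in fact $3$ always divides $2^{as}+1$, so the forced residue classes are weak and the constant $c$ in $r_i\ge c\,i$ is not what you would need without substantial extra work. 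Also note that the pairs $(8,7),(8,9),(16,7),(16,9)$ are excluded by the hypotheses of part (1) (they satisfy neither $s\ge 11$ nor $q\ge 32$) and are handled elsewhere in the paper, so they are not exceptions to be checked within this lemma.
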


\begin{proof}
\begin{enumerate}
\item Clearly $W(q^{s}+1)<d(q^{s}+1)$ and, according to Lemma \ref{divisor}, we have $d(q^{s}+1)<(q^s+1)^{\frac{1.1}{\log\log(q^{s}+1)}}$. Note that, since $q\ge 8$ and $s\ge 7$, it follows that $q^{0.05}>1.1 >1+\frac{1}{q^s}$ and then $q^s+1<q^{s+0.05}$. Also, if either $s\ge 11$ or $q\ge 32$, under our conditions we have
$$q^s+1\ge \min\{8^{11}+1, 32^7+1\}=2^{33}+1.$$ 
To finish the proof, just notice that $\frac{1.1}{{\log\log(2^{33}+1)}}<0.352$.

\item Let $r$ be any prime divisor of $4^s+1$. Hence $2^{2s}\equiv -1\pmod r$ and, in particular, $-1$ is a quadratic residue $\pmod r$ and then $r\equiv 1\pmod 4$. Let $p_1<\cdots <p_d$ be the prime divisors of $4^s+1$. In particular, we have proved that $p_i\ge 4i+1$. Hence
$$4^s+1\ge p_1\dots p_d\ge 4^d\cdot d!+1.$$

Notice that if $d\le 8$, then $W(4^s+1)\le 256= 4^{4}<4^{\frac{s}{4.05}}$ since $s\ge 17$. For $d\ge 9$, it follows by induction that $d!>2^{2.05d}$ and then
$$4^s\ge 4^d\cdot d!>2^{4.05\cdot d}.$$
To finish the proof, note that $W(4^s+1)=2^d<4^{\frac{s}{4.05}}$.\qedhere
\end{enumerate}
\end{proof}

\section{Appendix: Pseudocode for search for primitive $1$-normals}\label{sec:pseudocode}


Algorithm~\ref{alg:search} presents a search routine for primitive $1$-normal elements of $\fqn$ over $\fq$. It relies on the original characterization of $k$-normal elements from~\cite{HMPT}. 

\begin{theorem}\label{thm:k-normalgcd}
Let $\alpha \in \fqn$ and let $g_\alpha(x) = \sum_{i=0}^{n-1} \alpha^{q^i}x^{n-1-i} \in \fqn[x]$. Then $\gcd(x^n-1, g_\alpha(x))$ has degree $k$ if and only if $\alpha$ is a $k$-normal element of $\fqn$ over $\fq$.
\end{theorem}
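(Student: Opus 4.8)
The plan is to identify the polynomial $\gcd(x^n-1,g_\alpha(x))$ with the cofactor $\tfrac{x^n-1}{m_{\sigma_q,\alpha}(x)}$ of the minimal $q$-polynomial of $\alpha$. By Proposition~\ref{prop:k-normaliff}, $\alpha$ is $k$-normal if and only if $\deg m_{\sigma_q,\alpha}=n-k$, so it suffices to prove the identity
\[ \gcd(x^n-1,\,g_\alpha(x)) \;=\; \frac{x^n-1}{m_{\sigma_q,\alpha}(x)}, \]
from which the degree statement (hence the stated equivalence) is immediate; the degenerate case $\alpha=0$, where both sides equal $x^n-1$, is included. I would compute the gcd in $\fqn[x]$ throughout and only afterwards observe that it has coefficients in $\fq$.

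The first key step is the identity: for every $f\in\fq[x]$ with $\deg f<n$,
\[ (x^n-1)\mid f(x)\,g_\alpha(x) \quad\Longleftrightarrow\quad f\circ\alpha=0. \]
To see this I would expand $f(x)g_\alpha(x)$, reduce modulo $x^n-1$, and collect the coefficient of $x^\ell$: writing $g_\alpha(x)=\sum_{k}\alpha^{q^{n-1-k}}x^k$ and reindexing, one finds this coefficient is exactly $(x^{n-1-\ell}f)\circ\alpha=\sigma_q^{\,n-1-\ell}(f\circ\alpha)$, using $\fq$-linearity of $\sigma_q$ and Proposition~\ref{prop:linearized}. Hence $f(x)g_\alpha(x)\equiv0\pmod{x^n-1}$ iff $\sigma_q^{a}(f\circ\alpha)=0$ for all $a$, which, as $\sigma_q$ is injective, is equivalent to $f\circ\alpha=0$. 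Writing $D=\gcd(x^n-1,g_\alpha)$ and using that $\fqn[x]$ is a PID with $\gcd\!\left(\tfrac{x^n-1}{D},\tfrac{g_\alpha}{D}\right)=1$, one also has the standard equivalence $(x^n-1)\mid f g_\alpha \Leftrightarrow \tfrac{x^n-1}{D}\mid f$ for any $f\in\fqn[x]$.

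To pin down $D$ over the ground field, I would verify the twisted relation $g_\alpha^{\sigma_q}(x)\equiv x\,g_\alpha(x)\pmod{x^n-1}$ — obtained by applying $\sigma_q$ to the coefficients of $g_\alpha$ and reindexing, using $\alpha^{q^n}=\alpha$, the discrepancy being exactly $\alpha(x^n-1)$. Since $x^n-1$ is $\sigma_q$-invariant and $\gcd(x,x^n-1)=1$, it follows that $D^{\sigma_q}=\gcd(x^n-1,x\,g_\alpha)=D$, so $D\in\fq[x]$. Combining the three steps, for all $f\in\fq[x]$ with $\deg f<n$,
\[ f\circ\alpha=0 \quad\Longleftrightarrow\quad (x^n-1)\mid f\,g_\alpha \quad\Longleftrightarrow\quad \frac{x^n-1}{D}\mid f, \]
so the kernel of the $\fq$-linear map $f\mapsto f\circ\alpha$ on $\fq[x]_{<n}$ is precisely the $\fq[x]$-multiples of $\tfrac{x^n-1}{D}$ of degree $<n$, an $\fq$-space of dimension $\deg D$. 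On the other hand its image is $\mathrm{Span}_{\fq}\{\alpha,\alpha^q,\dots,\alpha^{q^{n-1}}\}$, of dimension $n-k$ when $\alpha$ is $k$-normal; rank--nullity then gives $\deg D=k$, which is the claim. (Equivalently, one may simply substitute $f=m_{\sigma_q,\alpha}$ and $f=\tfrac{x^n-1}{D}$ into the displayed chain to conclude $\tfrac{x^n-1}{D}=m_{\sigma_q,\alpha}$.)

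I expect the only real friction to be the index bookkeeping in the coefficient computation of $f(x)g_\alpha(x)\bmod(x^n-1)$, together with the point that the gcd — a priori merely an element of $\fqn[x]$ — descends to $\fq[x]$; the twist $g_\alpha^{\sigma_q}\equiv x\,g_\alpha\pmod{x^n-1}$ is the one genuinely nontrivial observation needed for this. A pleasant byproduct is that the argument never uses squarefreeness of $x^n-1$, so it is uniform whether or not $p\mid n$.
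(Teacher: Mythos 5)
Your argument is correct. Note, though, that the paper does not prove this statement at all: Theorem~\ref{thm:k-normalgcd} is imported verbatim from~\cite{HMPT} as the ``original characterization of $k$-normal elements'' and is used only to drive the search routine in Appendix~\ref{sec:pseudocode}, so there is no in-paper proof to compare against. What you have written is a correct, self-contained derivation, and it is essentially the classical Ore-style argument that underlies the cited result: the coefficient computation showing that the coefficient of $x^\ell$ in $f(x)g_\alpha(x)\bmod(x^n-1)$ equals $\sigma_q^{\,n-1-\ell}(f\circ\alpha)$ checks out (it does use $f\in\fq[x]$ so that $\sigma_q$ fixes the $a_m$, which you have), the twist $g_\alpha^{\sigma_q}\equiv x\,g_\alpha\pmod{x^n-1}$ with discrepancy $\alpha(x^n-1)$ is exactly right and is indeed the step needed to see that the gcd descends to $\fq[x]$, and the rank--nullity count correctly converts $\deg D$ into the co-dimension of $\mathrm{Span}_{\fq}\{\alpha,\dots,\alpha^{q^{n-1}}\}$. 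One small caution: your parenthetical shortcut ``substitute $f=m_{\sigma_q,\alpha}$ into the displayed chain'' is stated only for $\deg f<n$, which excludes the normal case $m_{\sigma_q,\alpha}=x^n-1$; either extend the first equivalence to all $f$ by reducing modulo $x^n-1$ (using $((x^n-1)h)\circ\alpha=0$), or simply rely on the rank--nullity version, which handles all cases uniformly. Your closing remark is also a genuine plus: the argument nowhere uses that $x^n-1$ is squarefree, so the characterization is valid whether or not $p\mid n$, which matters for Sections~\ref{sec:pdivn} of the paper.
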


Algorithm~\ref{alg:search} proceeds as follows. Let $\fqn \cong \fq[x]/(f)$ with $f$ a primitive polynomial, and let $g$ be a root of $f$. Hence, $\langle g \rangle = \fqn^*$ and $g^i$ is primitive if and only if $\gcd(i, q^n-1) = 1$. 
For each primitive element, check its $k$-normality using Theorem~\ref{thm:k-normalgcd}. If $k=1$, then the resulting element is primitive $1$-normal and is returned. The algorithm returns ``Fail'' if no primitive $1$-normal is found after $q^n-2$ iterations; that is, if all of $\fqn$ is traversed. 

We implemented Algorithm~\ref{alg:search} with the Sage computer algebra system~\cite{sagemath}.

\begin{algorithm}[h]
\caption{Pseudocode for primitive $1$-normal element search algorithm}
\label{alg:search}
\begin{algorithmic}
\State{{\bf Input:} positive integers $q,n$}
\State {\bf Returns:} primitive $1$-normal element: ${elt} \in \fqn$; otherwise ``Fail''
\State{$\mathrm{mult\_order} \gets q^n-1$}
\State{$g \gets \mathrm{generator}(\fqn^*)$}
\State{${cyclo} \gets x^n-1 \in \fqn[x]$}
\State{}

\Function{$\mathrm{check\_k\_normal}$}{$v$} \Comment{See Theorem~\ref{thm:k-normalgcd}}
   \State $g_v(x) \gets v x^{n-1} + v^q x^{n-2} + \cdots + v^{q^{n-1}}$
	 \State $k \gets \deg(\gcd(g_v, {cyclo}))$
	 \State \Return $k$
\EndFunction

\State{}
\State{$i \gets 1$}
\While{True}
   \If{$i$ {\bf is} $\mathrm{mult\_order}$} \Comment{No primitive $1$-normals found in $\fqn$}
	   \State \Return{``Fail''}
	 \EndIf 
	
	 \If{$\gcd(i,\mathrm{mult\_order}) \neq 1$} \Comment{Only check primitive elements}
	    \State{$i \gets i+1$}
	    \State{{\bf continue}}
	 \EndIf
	 
	 \State{$\mathrm{elt} \gets g^i$}
	 \State{$k \gets \mathrm{check\_k\_normal}({elt})$}
	 \If{$k$ {\bf is} $1$}
	   \State \Return $elt$
	 \EndIf
   \State{$i \gets i+1$}
\EndWhile
\end{algorithmic}
\end{algorithm}

\end{document}